\newtheorem{theorem}{Theorem}[section]
\newtheorem{lemma}{Lemma}[section]
\newtheorem{prop}{Proposition}[section]
\newtheorem{definition}{Definition}[section]
\newtheorem{remark}{Remark}
\numberwithin{equation}{section}
\newenvironment{proof}{\noindent{\textsc{Proof.}}}
{$\hfill\Box$\vspace{0.1 cm}\\}
\newcommand{\R}{\mathbb R}
\newcommand{\tv}{\hbox{Tot.Var.}}
\newcommand{\N}{{\mathbb N}}
\newcommand{\abs}[1]{\left\vert #1 \right\vert}
\newcommand{\Rsol}{\mathcal{RS}}
\newcommand{\pt}{\partial_t}
\newcommand{\px}{\partial_x}
\newcommand{\sx}{( \rho^l, v^l )}
\newcommand{\dx}{( \rho^r, v^r )}
\newcommand{\dom}{\mathcal D}
\newcommand{\ddom}{\mathcal D_{v_1,v_2,w_1,w_2}}
\newcommand{\rs}[1]{\Rsol^q_{#1}((\rho^l, v^l), (\rho^r, v^r))}
\newcommand{\be}{\begin{equation}}
\newcommand{\ee}{\end{equation}}
\newcommand{\bu}{{\bf u}}
\begin{document}


\title{The Aw-Rascle traffic model with locally constrained flow}

\author{M.~Garavello\thanks{DiSTA, Universit\`a del Piemonte Orientale
    ``A. Avogadro'', Alessandria, Italy.
    E-mail: \texttt{mauro.garavello@mfn.unipmn.it}. Partially supported
    by Dipartimento di Matematica e Applicazioni of the University of
    Milano-Bicocca.} \and
  P.~Goatin\thanks{IMATH, Universit\'e du Sud Toulon-Var,
    France. E-mail: \texttt{goatin@univ-tln.fr}.}}

\maketitle

\begin{abstract}
  We consider solutions of the Aw-Rascle model for traffic flow fulfilling a constraint on the flux at $x=0$. 
  Two different kinds of solutions are proposed:
  at $x=0$ the first one conserves both the number of vehicles and the
  generalized momentum, while the second one conserves only the number of
  cars.
  We study the invariant domains for these solutions and we compare
  the two Riemann solvers in terms of total variation of relevant quantities.
  Finally we construct {\it ad hoc} finite volume numerical schemes to compute these solutions.
\end{abstract}

\textit{Key Words:} 
Aw-Rascle model, traffic models, unilateral constraint,
Riemann problem, finite volume numerical scheme. 

\textit{AMS Subject Classifications:} 
90B20, 35L65.

\section{Introduction}\label{se:introduction}

The paper deals with solutions to the Aw-Rascle vehicular traffic model~\cite{Aw-Rascle_2000}
\begin{equation}
  \label{eq:aw-rascle2}
  \left\{
    \begin{array}{l}
      \pt \rho + \px (\rho v) = 0,\\
      \pt y + \px \left(y v\right) = 0,\\
    \end{array}
  \right.
\end{equation}
satisfying a constraint on the first component of the flux at $x=0$:
\begin{equation}
  \label{eq:constraint}
  \rho(t,0) v(t,0) \le q,
\end{equation}
where $q > 0$ is a given constant.
Here $\rho$, $v$ and $y$
denote respectively the density, the average speed and a generalized
momentum of cars in a road. Moreover $y = \rho \left(v+p(\rho) \right)$,
where $p \in C^2([0,+\infty[; [0,+\infty[)$ is a pressure function satisfying
\begin{equation}
  \label{eq:pressure-function}
  \left\{
    \begin{array}{l}
      p(0)=0, \\
      p'( \rho) > 0 \textrm{ for every } \rho > 0,\\
      \rho \mapsto \rho p(\rho) \textrm{ is strictly convex.}
    \end{array}
  \right.
\end{equation}
%
%
%
%
Problem~\eqref{eq:aw-rascle2},~\eqref{eq:constraint} models the presence of a 
constraint on traffic flow at the point $x=0$, such as a toll gate, a traffic light, 
a construction site, etc. All these situations limit the flow at a specific location 
along the road. 
Conservation laws with unilateral constraints as~\eqref{eq:constraint}
were first introduced in~\cite{MR2300671}, see also~\cite{AGS, ColomboGoatinRosini,mtq} 
for further analytical results and applications. In these papers, the scalar 
Lighthill-Whitham~\cite{Lighthill-Whitham_1955} and Richards~\cite{Richards_1956} traffic model is coupled with
a (possibly time-dependent) constraint on the flow, as in~\eqref{eq:constraint}.

The model presented here constitutes the first example of a system
of two equations with constrained flux. 
The Aw-Rascle model~\eqref{eq:aw-rascle2}
belongs to the so-called ``second order'' traffic models,
i.e. models consisting in two equations
(see~\cite{ColomboNewmod, payne_1971, whitham_1974} for other examples). 
System~\eqref{eq:aw-rascle2} can also be written
\begin{equation}
  \label{eq:aw-rascle1}
  \left\{
    \begin{array}{l}
      \pt \rho + \px (\rho v) = 0,\\
      \pt (\rho (v+p(\rho))) + \px (\rho v (v+p(\rho))) = 0.\\
    \end{array}
  \right.
\end{equation}
The first equation in~\eqref{eq:aw-rascle1} states the conservation 
of the number of vehicles, moving with flow rate $\rho v$.
The second equation is derived from the former one and from
the evolution equation of the quantity $w=v+p(\rho)$
(often referred to as ``Lagrangian marker''),
which moves with velocity $v$:
$$
 \pt (v+p(\rho)) + v \px (v+p(\rho)) = 0.
$$
The system in conservative form~\eqref{eq:aw-rascle1}
belongs to the Temple class~\cite{Temple}, i.e. systems for
which shock and rarefaction curves in the unknowns' space coincide.
In particular, for such systems the interaction of two waves 
of the same family can only give rise to a wave of the same family. 

The Aw-Rascle model~\eqref{eq:aw-rascle1} has been widely studied 
in the mathematical literature. Concerning the model itself, 
various extensions have been proposed,
see~\cite{MR2366138, MR2438216, goatin_phase_2006, 
  Greenberg_2001-02, MR2366992}.
The model can also be used to describe traffic flow on a road network, 
as explained in~\cite{garavello-piccoli_ARModel_2006, MR2223072, MR2237163}.

In this paper we restrict the analysis to the
Riemann problem for~\eqref{eq:aw-rascle2},
\eqref{eq:constraint}, i.e. to the Cauchy problem with piecewise
constant initial data of the form
$$
(\rho, y)(0,x) = \left\{
        \begin{array}{ll}
          (\rho^l, y^l), & \textrm{ if } x<0,\\
          (\rho^r, y^r), & \textrm{ if } x>0.
        \end{array}
      \right.
$$
We propose two Riemann solvers, described in Sections 2.1 and 2.2:
the first one conserves at $x=0$ both the number of cars and the generalized
momentum, while the second one does not conserve the generalized momentum.
In particular, the first Riemann solver produces a non-entropic shock
wave at $x=0$, which travels with zero velocity.
In Section 3 we describe the invariant domains corresponding to the 
two Riemann solvers, and in Section 4 we compare
the total variation of relevant 
quantities. Section 5 is devoted to the construction
of {\it ad hoc} numerical schemes
designed to capture the proposed solutions.

%
%
%
%
\section{The Riemann problem}
\label{se:Riemann_problem}

In this section we deal with the Riemann problem
\begin{equation}
  \label{eq:Riemann_problem}
  \left\{
    \begin{array}{l}
      \pt \rho + \px (\rho v) = 0,\\
      \pt (\rho (v+p(\rho))) + \px (\rho v (v+p(\rho))) = 0,\\
      (\rho, v)(0,x) = \left\{
        \begin{array}{ll}
          (\rho^l, v^l), & \textrm{ if } x<0,\\
          (\rho^r, v^r), & \textrm{ if } x>0,
        \end{array}
      \right.
    \end{array}
  \right.
\end{equation}
in the domain $\dom=\R^+\times\R^+$,
and with the constraint~\eqref{eq:constraint}.

We denote by $f(\rho,v)$ the flux for system~(\ref{eq:aw-rascle1}),
and with $f_1(\rho,v)$, $f_2(\rho,v)$ its components, i.e.
\begin{equation}
  \label{eq:flux}
  f (\rho, v) = \left(
    \begin{array}{c}
      f_1(\rho, v) \\
      f_2(\rho, v)
    \end{array}
  \right) = \left(
    \begin{array}{c}
      \rho v \\
      \rho v ( v + p(\rho))
    \end{array}
  \right).
\end{equation}
For reader's comfort, we resume in the following tables 
the relevant quantities concerning systems \eqref{eq:aw-rascle2}, 
\eqref{eq:aw-rascle1} respectively.
In $(\rho, y)$ plane they write:
\begin{displaymath}
  \begin{array}{ll}
    \lambda_1 = -p(\rho) + \frac{y}{\rho} - \rho p'(\rho)
    &
    \lambda_2 = -p(\rho) + \frac{y}{\rho} \\
    r_1 = \left(
      \begin{array}{c}
        -1 \\ -\frac{y}{\rho}
      \end{array}
    \right)
    &
    r_2 = \left(
      \begin{array}{c}
        1 \\ \frac{y}{\rho} + \rho p'(\rho)
      \end{array}
    \right)
    \\
    \nabla \lambda_1 \cdot r_1 = 2 p'(\rho) + \rho p''(\rho) > 0
    &
    \nabla \lambda_2 \cdot r_2 = 0
    \\
    L_1 (\rho; \rho_0, y_0) = \frac{y_0}{\rho_0} \rho
    &
    L_2 (\rho; \rho_0, y_0) = \frac{y_0}{\rho_0} \rho + \rho \left(p(\rho) -
      p(\rho_0)\right)\\
    z = \frac{y}{\rho} -p(\rho) & w = \frac{y}{\rho}
  \end{array}
\end{displaymath}
In $(\rho, v)$ plane their expression is:
\begin{displaymath}
  \begin{array}{ll}
    \lambda_1 = v - \rho p'(\rho)
    &
    \lambda_2 = v \\
    r_1 = \left(
      \begin{array}{c}
        -1 \\ p'(\rho)
      \end{array}
    \right)
    &
    r_2 = \left(
      \begin{array}{c}
        1 \\ 0
      \end{array}
    \right)
    \\
    \nabla \lambda_1 \cdot r_1 = 2 p'(\rho) + \rho p''(\rho) > 0
    &
    \nabla \lambda_2 \cdot r_2 = 0
    \\
    L_1 (\rho; \rho_0, v_0) = v_0 +p(\rho_0) - p(\rho)
    &
    L_2 (\rho; \rho_0, v_0) = v_0
    \\
    z = v & w = v + p(\rho)
  \end{array}
\end{displaymath}
Above, $\lambda_1$ and $\lambda_2$ denote the eigenvalues
of the Jacobian matrix $Df$,
$r_1$ and $r_2$ the corresponding right eigenvectors,
$L_1$ and $L_2$ the first and the second
Lax curve, $z$ and $w$ the $1$- and $2$-Riemann invariant respectively.

We remark that the system is strictly hyperbolic away from $\rho=0$
(i.e. $\lambda_1 < \lambda_2$). Moreover
the first characteristic speed is genuinely nonlinear,
with characteristic speed that can change sign, and the 
second one is linearly degenerate with strictly positive speed.
\begin{definition}
  A Riemann solver for system~(\ref{eq:Riemann_problem}) is a function,
  which associates, for every initial condition $(\rho^l, v^l) \in \dom$,
  $(\rho^r, v^r) \in \dom$, a map belonging to $L^1(\R)$ and representing a
  solution to~(\ref{eq:Riemann_problem}) at time $t = 1$.
\end{definition}

By $\Rsol$ we denote the classical Riemann solver
for~(\ref{eq:Riemann_problem}), i.e. the Riemann solver without
the constraint~(\ref{eq:constraint}); see for example~\cite{Aw-Rascle_2000}.
We introduce some more notation.

Given $(\rho^l, v^l) \in \dom$ and $q>0$, let us consider the set
\begin{eqnarray}
  \label{eq:I_1}
  I_1 & = & \left\{ \rho \in [0,+\infty[ \ \colon \rho L_1(\rho; \rho^l, v^l) =q
  \right\} \\
  & = & \left\{ \rho \in [0,+\infty[ \ 
    \colon \rho ( v^l + p(\rho^l) - p(\rho)) =q
  \right\}. \nonumber
\end{eqnarray}
The set $I_1$ contains the densities of all the points
$(\rho, v) \in \dom$ belonging to the Lax curve of the first family
passing through $(\rho^l, v^l)$ and such that $f_1(\rho^l, v^l) = q$.
If $I_1 \ne \emptyset$, then we denote by $\hat \rho$, $\hat v$,
$\check \rho_1$, $\check v_1$ respectively
\begin{equation}
  \label{eq:hat_check}
  \hat \rho = \max I_1, \quad
  \hat v = \frac{q}{\hat \rho}, \quad
  \check \rho_1 = \min I_1, \quad
  \check v_1 = \frac{q}{\check \rho_1};
\end{equation}
see Figure~\ref{fig:I_1}.
\begin{figure}
  \centering
  \begin{psfrags}
    \psfrag{l}{ \hspace{-.6cm}$(\rho^l, \rho^l v^l)$}
    \psfrag{w}{ $L_1$}
    \psfrag{q}{ $q$}
    \psfrag{crho}{ $\check \rho_1$}
    \psfrag{hrho}{ $\hat \rho$}
    \psfrag{vrho}{\hspace{-.5cm} $f_1$}
    \psfrag{rho}{ $\rho$}
    \includegraphics[width = 11cm]{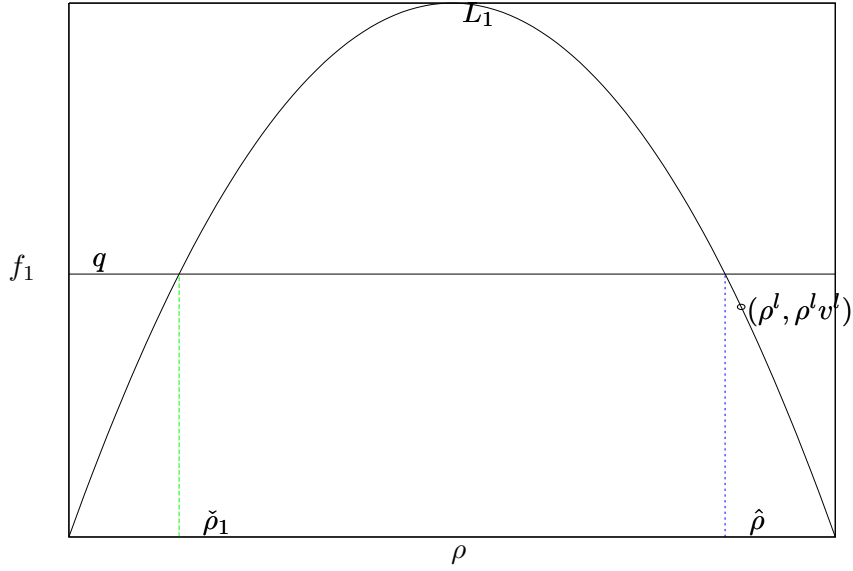}
  \end{psfrags}
  \caption{The set $I_1$ and the quantities of equation~(\ref{eq:hat_check}).}
  \label{fig:I_1}
\end{figure}
Given $(\rho^r, v^r) \in \dom$ and $q>0$, let $\check \rho_2$ and
$\check v_2$ be defined by
\begin{equation}
  \label{eq:check_rho1}
  \check \rho_2 L_2(\check \rho_2; \rho^r, v^r) = q, \quad
  \check v_2 = \frac{q}{\check \rho_2};
\end{equation}
i.e. $(\check\rho_2, \check v_2)$ belongs to the Lax curve of the second
family passing through $(\rho^r, v^r)$ and satisfies
$f_1(\check\rho_2, \check v_2) = q$.
In particular, note that $\check v_2 =v^r$ and $\check \rho_2 = q/v^r$. \\
Given $(\rho^l, v^l)$ and $(\rho^r, v^r) \in \dom$, 
let us consider the set
\begin{eqnarray}
  \label{eq:I_2}
  I_2 & = & \left\{ \rho \in [0,+\infty [ \ \colon 
   L_1 (\rho;  \rho^l, v^l) =  L_2(\rho; \rho^r, v^r)
  \right\} \\
  & = & \left\{ \rho \in [0,+\infty [ \ \colon 
  v^l + p(\rho^l) - p(\rho) =v^r
  \right\} \nonumber
\end{eqnarray}
and define
\begin{equation}
  \label{eq:rho2m}
  \rho^m = \max I_2, \quad
  v^m = v^r,
\end{equation}
which provide the intermediate state for the classical solution
to~(\ref{eq:Riemann_problem}).

\begin{lemma}
  \label{lem:I1_not_empty}
  Let $(\rho^l, v^l), (\rho^r, v^r) \in \dom$ and $q>0$ be fixed.
  Assume~(\ref{eq:pressure-function}) holds.
  If
  \begin{displaymath}
    f_1(\Rsol((\rho^l, v^l),(\rho^r, v^r))(0)) > q,
  \end{displaymath}
  then the set $I_1$ is not empty and it consists in exactly two
  different points: $I_1=\left\{ \check\rho_1, \hat\rho  \right\}$.
\end{lemma}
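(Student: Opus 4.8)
The plan is to reduce the statement to elementary one–variable calculus for the function
\[
\psi(\rho)\;:=\;\rho\,L_1(\rho;\rho^l,v^l)\;=\;\rho\bigl(v^l+p(\rho^l)-p(\rho)\bigr),\qquad \rho\in[0,+\infty[,
\]
which is nothing but the first flux component $f_1$ evaluated along the first Lax curve issued from $(\rho^l,v^l)$. By the very definition \eqref{eq:I_1} one has $I_1=\{\rho\ge 0:\psi(\rho)=q\}$, so the assertion becomes: the level set $\psi^{-1}(q)$ consists of exactly two distinct points.

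First I would describe the graph of $\psi$. One has $\psi(0)=0$ and
\[
\psi'(\rho)=v^l+p(\rho^l)-p(\rho)-\rho p'(\rho)=L_1(\rho;\rho^l,v^l)-\rho p'(\rho),
\]
so that $\psi'$ is exactly $\lambda_1$ read along the first Lax curve; in particular $\psi'(0)=v^l+p(\rho^l)>0$, since $v^l>0$. Differentiating once more, $\psi''(\rho)=-\bigl(2p'(\rho)+\rho p''(\rho)\bigr)=-\nabla\lambda_1\cdot r_1<0$ by \eqref{eq:pressure-function}, hence $\psi$ is strictly concave. Since $\rho\mapsto p(\rho)+\rho p'(\rho)=(\rho p(\rho))'$ is strictly increasing and tends to $+\infty$, the slope $\psi'$ is strictly decreasing and changes sign exactly once, at a point $\bar\rho>0$; thus $\psi$ is strictly increasing on $[0,\bar\rho]$, strictly decreasing on $[\bar\rho,+\infty[$, attains $\max_{\rho\ge 0}\psi=\psi(\bar\rho)$, and $\psi(\rho)\to-\infty$ as $\rho\to+\infty$. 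This is the ``bump'' profile of Figure~\ref{fig:I_1}.

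Next I would exploit the hypothesis to place the maximum of $\psi$ strictly above $q$. The classical solution $\Rsol((\rho^l,v^l),(\rho^r,v^r))$ consists of a $1$-wave joining $(\rho^l,v^l)$ to the intermediate state $(\rho^m,v^m)$, followed by a $2$-contact discontinuity; the latter is linearly degenerate and travels with the strictly positive speed $\lambda_2=v^m=v^r$, so it lies entirely in $\{x>0\}$. Therefore the trace at $x=0$ belongs to the $1$-wave, hence to the first Lax curve through $(\rho^l,v^l)$ (recall that for a Temple system the $1$-shock and the $1$-rarefaction curves coincide with this Lax curve); note also that the assumption $f_1(\Rsol(\cdots)(0))>q>0$ in particular rules out vacuum at $x=0$, so this trace is a genuine state. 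Consequently $f_1\bigl(\Rsol((\rho^l,v^l),(\rho^r,v^r))(0)\bigr)=\psi(\rho_0)$ for some $\rho_0> 0$, and the assumption forces $\psi(\rho_0)>q$, whence $\psi(\bar\rho)\ge\psi(\rho_0)>q$. Combining $\psi(0)=0<q<\psi(\bar\rho)$ with the strict monotonicity of $\psi$ on $[0,\bar\rho]$ and on $[\bar\rho,+\infty[$ and with $\psi(\rho)\to-\infty$, the intermediate value theorem yields exactly one solution $\check\rho_1\in\,]0,\bar\rho[$ and exactly one solution $\hat\rho\in\,]\bar\rho,+\infty[$ of $\psi(\rho)=q$, and no others; these are precisely $\min I_1$ and $\max I_1$ of \eqref{eq:hat_check}, so $I_1=\{\check\rho_1,\hat\rho\}$ with $\check\rho_1<\bar\rho<\hat\rho$, two distinct points, as claimed.

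I expect the two genuinely non-routine points to be: (i) the structural fact that the trace $\Rsol(\cdots)(0)$ lies on the first Lax curve of $(\rho^l,v^l)$, which requires recalling the precise form of the classical Aw--Rascle solver and, crucially, that the $2$-contact always has positive speed; and (ii) the verification that $\psi$ does come back below the level $q$ to the right of its maximum, i.e. that $\psi$ is eventually strictly decreasing — this is the place where the behaviour of the pressure at infinity, together with the concavity encoded in \eqref{eq:pressure-function}, must be invoked, and it is the step I would state with particular care. Everything else is a direct discussion of the strictly concave profile $\psi$ and the intermediate value theorem.
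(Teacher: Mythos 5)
Your proof is correct and follows essentially the same route as the paper's: strict concavity of $\rho\mapsto\rho L_1(\rho;\rho^l,v^l)$ bounds $\#I_1$ by two, and the fact that the $2$-wave has strictly positive speed places the trace $\Rsol((\rho^l,v^l),(\rho^r,v^r))(0)$ on the first Lax curve, so the hypothesis forces the concave profile above the level $q$. The only difference is cosmetic: you argue directly via the intermediate value theorem, making explicit the endpoint behaviour ($\psi(0)=0<q$ and $\psi\to-\infty$, the latter resting on $p(\rho)\to+\infty$ as in Lemma~\ref{le:h_q}), where the paper argues by contradiction and leaves these facts implicit.
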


\begin{proof}
  Notice that the function $\rho \mapsto \rho L_1(\rho; \rho^l, v^l)$
  is strictly concave by the hypotheses~(\ref{eq:pressure-function})
  on the pressure function $p(\rho)$
  and so, by~(\ref{eq:I_1}), the cardinality of $I_1$ is at most $2$. \\
  Denote with $(\rho^M, v^M)$ the trace of
  $\Rsol((\rho^l, v^l), (\rho^r, v^r))$ at $x=0+$. Since the waves of
  the second family have strictly positive speed, then
  $v^M = L_1(\rho^M; \rho^l, v^l)$.
  Therefore, if $I_1 = \emptyset$ or it contains only one element,
  then $\rho^M v^M \le q$ and therefore
  $f_1(\Rsol((\rho^l, v^l),(\rho^r, v^r))(0)) \le q$, which is a contradiction.
  Thus the only possibility is that $I_1$ is composed exactly by
  two elements.
\end{proof}

We propose two different ways of solving
problem~(\ref{eq:Riemann_problem})-(\ref{eq:constraint}).

%
%
\subsection{The Constrained Riemann Solver $\Rsol_{1}^q$}
\label{sse:RS1}

In this part, we introduce the Riemann solver $\Rsol_1^q$
for~(\ref{eq:Riemann_problem})-(\ref{eq:constraint}),
which
is characterized by the conservation of both the quantities
$\rho$ and $y = \rho (v + p(\rho))$ at $x=0$.

Fix $(\rho^l, v^l) , (\rho^r, v^r) \in \dom$. The Riemann solver
$\Rsol_1^q$ is defined as follows.

\begin{enumerate}
\item If $f_1(\Rsol \left(\sx, \dx \right)(0)) \le q$, then 
  \begin{equation}
    \label{eq:rs1_case1}
    \Rsol^q_1 \left( \sx, \dx \right)(x) = \Rsol \left( \sx, \dx \right)(x)
  \end{equation}
  for every $x \in \R$.

\item If $f_1(\Rsol \left(\sx, \dx \right)(0)) > q$, then
  \begin{equation}
    \label{eq:rs1_case2}
    \Rsol^q_1 \left( \sx, \dx \right) (x) = \left\{
      \begin{array}{ll}
        \Rsol \left( \sx, (\hat \rho, \hat v)
        \right) (x), & \textrm{ if } x<0,\\
        \Rsol \left( (\check \rho_1, \check v_1 )
          , \dx \right) (x), 
        & \textrm{ if } x>0.
      \end{array}
    \right.
  \end{equation}

\end{enumerate}

\begin{prop}
  The Riemann solver $\Rsol^q_1$ satisfies
  \begin{displaymath}
    f_1(\Rsol^q_1\left(\sx, \dx \right)(0)) \le q
  \end{displaymath}
  for every $\sx$ and $\dx$. 
\end{prop}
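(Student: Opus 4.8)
The plan is to treat separately the two cases in the definition of $\Rsol^q_1$. In the first case there is nothing to do: $\Rsol^q_1$ coincides with $\Rsol$ and the defining hypothesis of that case is exactly $f_1(\Rsol((\rho^l,v^l),(\rho^r,v^r))(0))\le q$. All the content lies in the second case, where I would show that the trace of $\Rsol^q_1$ at $x=0^-$ is $(\hat\rho,\hat v)$ and its trace at $x=0^+$ is $(\check\rho_1,\check v_1)$. Since $\hat\rho,\check\rho_1\in I_1$, both points lie on the first Lax curve through $(\rho^l,v^l)$ and $f_1(\hat\rho,\hat v)=\hat\rho\,L_1(\hat\rho;\rho^l,v^l)=q$, $f_1(\check\rho_1,\check v_1)=\check\rho_1\,L_1(\check\rho_1;\rho^l,v^l)=q$, which gives the claim. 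Identifying these two traces amounts to proving that every wave of $\Rsol((\rho^l,v^l),(\hat\rho,\hat v))$ has non-positive speed and every wave of $\Rsol((\check\rho_1,\check v_1),(\rho^r,v^r))$ has non-negative speed; the $2$-contact appearing in the latter has speed $v^r>0$ and is harmless.

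The main tool is the function $g(\rho):=\rho\,L_1(\rho;\rho^l,v^l)=\rho\,(v^l+p(\rho^l)-p(\rho))$, which is strictly concave by~\eqref{eq:pressure-function}, vanishes at $\rho=0$, and satisfies $g'(\rho)=L_1(\rho;\rho^l,v^l)-\rho p'(\rho)$; in particular $g'(\rho)$ equals the eigenvalue $\lambda_1$ computed at the point of density $\rho$ on the first Lax curve. Since we are in the second case, Lemma~\ref{lem:I1_not_empty} gives $I_1=\{\check\rho_1,\hat\rho\}$, i.e.\ $\check\rho_1$ and $\hat\rho$ are the two zeros of $g-q$; as $g-q$ is strictly concave and equals $-q<0$ at $\rho=0$, it follows that $g$ attains its maximum at a unique $\rho^*$ with $\check\rho_1<\rho^*<\hat\rho$, that $g\ge q$ exactly on $[\check\rho_1,\hat\rho]$, and that along the first Lax curve $\lambda_1=g'$ is positive for densities below $\rho^*$ and negative for densities above. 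Finally, recall (see~\cite{Aw-Rascle_2000}) that $\Rsol((\rho^l,v^l),(\rho^r,v^r))$ is a first-family wave joining $(\rho^l,v^l)$ to an intermediate state $(\rho^m,v^m)$ lying on that same first Lax curve, followed by the $2$-contact of speed $v^r>0$; hence its trace $(\rho^M,v^M)$ at $x=0^+$ lies on the Lax curve, has density $\rho^M\in\{\rho^l,\rho^*,\rho^m\}$, and the second-case hypothesis reads $g(\rho^M)>q$, i.e.\ $\check\rho_1<\rho^M<\hat\rho$.

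For the left part, $(\hat\rho,\hat v)$ lies on the first Lax curve through $(\rho^l,v^l)$, so $\Rsol((\rho^l,v^l),(\hat\rho,\hat v))$ is a single $1$-wave. I would first exclude $\rho^l<\check\rho_1$: in that case $g'(\rho)>0$ for every $\rho\le\rho^l$, so the classical $1$-wave is either a rarefaction supported in $\{x>0\}$ (if $\rho^m\le\rho^l$) or a $1$-shock whose trace at $x=0^+$ carries density $\rho^l$ or $\rho^m$, and in every sub-case $g(\rho^M)\le g(\rho^l)<q$, contradicting $g(\rho^M)>q$; hence $\rho^l\ge\check\rho_1$. If now $\rho^l\ge\hat\rho$, the $1$-wave is a rarefaction whose fan speeds range over $[g'(\rho^l),g'(\hat\rho)]$, all negative since $g'$ is decreasing and $\hat\rho>\rho^*$; if instead $\check\rho_1\le\rho^l<\hat\rho$, it is a $1$-shock of Rankine--Hugoniot speed $(q-g(\rho^l))/(\hat\rho-\rho^l)\le 0$ because $g(\rho^l)\ge q$ on $[\check\rho_1,\hat\rho]$. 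In both cases all speeds are non-positive, so the trace of $\Rsol^q_1$ at $x=0^-$ is $(\hat\rho,\hat v)$.

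For the right part, $(\check\rho_1,\check v_1)$ lies on the same first Lax curve as $(\rho^l,v^l)$, so $\Rsol((\check\rho_1,\check v_1),(\rho^r,v^r))$ shares with the classical solver the intermediate state $(\rho^m,v^m)$ and the $2$-contact, and only the $1$-wave from $(\check\rho_1,\check v_1)$ to $(\rho^m,v^m)$ must be examined. If $\rho^m\le\check\rho_1$ it is a rarefaction with all speeds $\ge g'(\check\rho_1)>0$. If $\rho^m>\check\rho_1$ it is a $1$-shock of speed $(g(\rho^m)-q)/(\rho^m-\check\rho_1)$, which is non-negative once one shows $\rho^m\le\hat\rho$ (so that $g(\rho^m)\ge q$); this last fact I would obtain by running through the three possibilities $\rho^M\in\{\rho^l,\rho^*,\rho^m\}$ and combining $\check\rho_1<\rho^M<\hat\rho$ with the sign of the classical shock speed. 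Thus all speeds are non-negative and the trace of $\Rsol^q_1$ at $x=0^+$ is $(\check\rho_1,\check v_1)$, which completes the argument. The two genuinely delicate steps --- and the only places where the second-case hypothesis is actually used --- are the exclusion of $\rho^l<\check\rho_1$ and the localization of $\rho^m$ in $[\check\rho_1,\hat\rho]$; the rest is routine monotonicity bookkeeping for the concave function $g$.
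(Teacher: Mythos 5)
Your proposal is correct, but it is far more detailed than what the paper actually offers: the paper disposes of this proposition with the single sentence ``the proof follows directly from the construction of the Riemann solver $\Rsol^q_1$.'' What that one-liner silently assumes is precisely the content of your argument, namely that in the second case of the definition the traces of the glued solution at $x=0^\mp$ really are $(\hat\rho,\hat v)$ and $(\check\rho_1,\check v_1)$ --- equivalently, that all waves of $\Rsol\bigl(\sx,(\hat\rho,\hat v)\bigr)$ have non-positive speed and all waves of $\Rsol\bigl((\check\rho_1,\check v_1),\dx\bigr)$ have non-negative speed. Your verification of this via the concave function $g(\rho)=\rho L_1(\rho;\rho^l,v^l)$, the identity $g'=\lambda_1$ along the $1$-Lax curve, and the localization $\check\rho_1<\rho^M<\hat\rho$ forced by the case-2 hypothesis is sound; in particular the two steps you flag as delicate (excluding $\rho^l<\check\rho_1$, and showing $\rho^m\le\hat\rho$ so that the $1$-shock emanating to the right of the constraint has non-negative speed) are exactly the points a careful reader would want spelled out, and your sign bookkeeping for the shock speeds $\bigl(q-g(\rho^l)\bigr)/(\hat\rho-\rho^l)$ and $\bigl(g(\rho^m)-q\bigr)/(\rho^m-\check\rho_1)$ is right. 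The only (harmless) borderline cases you gloss over are stationary shocks, where the trace at $0^\pm$ is the other endpoint of the jump; there the Rankine--Hugoniot condition forces that endpoint to have $f_1=q$ as well, so the conclusion is unaffected. In short: same underlying mechanism as the paper intends, but you supply the proof the paper omits.
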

The proof follows directly from the construction of the Riemann
solver $\Rsol^q_1$.

\begin{remark}
  The Riemann solver $\Rsol^q_1$ is determined by imposing the
  conservation of both quantities $\rho$ and $y$ at $x=0$ and
  by respecting the constraint condition~(\ref{eq:constraint});
  see also~\cite{MR2223072} for an example of a Riemann solver
  at a node, which conserves both $\rho$ and $y$.
\end{remark}

In the following, we denote by $w(\Rsol^q_1\left(\sx, \dx \right)(x))$  the $w$ component 
of the Riemann solver $\Rsol^q_1\left(\sx, \dx \right)(x)$.
\begin{prop}
  The Riemann solver $\Rsol^q_1$ satisfies the maximum principle 
  on the second Riemann invariant $w=v+p(\rho)$, i.e.
  \begin{displaymath}
    \min\left\{ w^l,w^r \right\}  \le 
    w(\Rsol^q_1\left(\sx, \dx \right)(x)) \le 
    \max\left\{ w^l,w^r \right\}, \quad \forall x \in \R.
  \end{displaymath}
\end{prop}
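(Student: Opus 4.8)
The plan is to reduce the statement to the corresponding (elementary) property of the unconstrained solver $\Rsol$, exploiting the fact that the auxiliary states pasted in at $x=0$ lie on the first Lax curve through $\sx$ and hence carry the value $w^l$ of the second Riemann invariant. The preliminary fact I would establish is: for any $(\rho_a,v_a),(\rho_b,v_b)\in\dom$,
\begin{displaymath}
  w\bigl(\Rsol((\rho_a,v_a),(\rho_b,v_b))(x)\bigr)\in\{w_a,w_b\}
  \quad\text{for every }x\in\R,
\end{displaymath}
where $w_a=v_a+p(\rho_a)$ and $w_b=v_b+p(\rho_b)$. This is immediate from the structure of the classical Aw--Rascle solution: since $2$-waves travel with strictly positive speed, the solution is a $1$-wave from $(\rho_a,v_a)$ to an intermediate state $(\rho_\star,v_\star)$, followed by a $2$-contact discontinuity from $(\rho_\star,v_\star)$ to $(\rho_b,v_b)$. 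Because $w$ is the $2$-Riemann invariant and system~\eqref{eq:aw-rascle1} is of Temple class, $w$ is constant across the whole $1$-wave (shocks and rarefactions alike), so $w\equiv w_a$ there; across the $2$-contact $v$ is constant and $w$ jumps from $w_a$ to $w_b$. Hence $w$ attains only the two values $w_a$ and $w_b$, which yields the stated bounds.

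With this in hand, in the first case of the definition of $\Rsol^q_1$ one has $\Rsol^q_1(\sx,\dx)=\Rsol(\sx,\dx)$ and the claim follows at once with $(w_a,w_b)=(w^l,w^r)$. In the second case I would first note that both $(\hat\rho,\hat v)$ and $(\check\rho_1,\check v_1)$ have $w=w^l$: by~\eqref{eq:I_1} and~\eqref{eq:hat_check}, $\hat v=q/\hat\rho=L_1(\hat\rho;\rho^l,v^l)=v^l+p(\rho^l)-p(\hat\rho)$, so $\hat v+p(\hat\rho)=v^l+p(\rho^l)=w^l$, and likewise $\check v_1+p(\check\rho_1)=w^l$ (here Lemma~\ref{lem:I1_not_empty} guarantees $\hat\rho,\check\rho_1$ are well defined). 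Applying the preliminary fact to the two halves of~\eqref{eq:rs1_case2}: for $x<0$, $w(\Rsol^q_1(\sx,\dx)(x))=w(\Rsol(\sx,(\hat\rho,\hat v))(x))\in\{w^l,\hat v+p(\hat\rho)\}=\{w^l\}$; for $x>0$, $w(\Rsol^q_1(\sx,\dx)(x))=w(\Rsol((\check\rho_1,\check v_1),\dx)(x))\in\{\check v_1+p(\check\rho_1),w^r\}=\{w^l,w^r\}$. Thus in all cases $w(\Rsol^q_1(\sx,\dx)(x))\in\{w^l,w^r\}$, which is exactly the asserted maximum principle; in particular the stationary non-entropic shock at $x=0$ joins two states both with $w=w^l$, so it is consistent with the bound.

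The only genuinely non-routine step is the identity $\hat v+p(\hat\rho)=\check v_1+p(\check\rho_1)=w^l$, that is, the observation that the states inserted at $x=0$ sit on the $1$-Lax curve emanating from $\sx$; once this is in place, everything reduces to bookkeeping on the two elementary waves composing a classical Riemann fan. A brief side remark may be needed for degenerate configurations (vacuum appearing in the fan, or $I_1$ failing to have cardinality two), but these are either excluded by the standing assumptions or fall back to the first case, and so cause no real difficulty.
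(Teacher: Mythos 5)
Your proposal is correct and follows essentially the same route as the paper, which simply invokes the maximum principle of the classical solver on each half-line together with the identity $\hat w = \check w_1 = w^l$; you have merely filled in the details (in particular, the observation that $w$ of a classical Riemann fan takes only the two endpoint values, and the verification that the inserted states lie on the $1$-Lax curve through $\sx$). No gaps.
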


The property easy follows from the maximum principle
satisfied by the classical Riemann solvers
$ \Rsol \left( \sx, (\hat \rho, \hat v) \right)$ for $x<0$ and
$ \Rsol \left( (\check \rho_1, \check v_1), \dx \right)$ for $x>0$,
and by the fact that $\hat w= \check w_1=w^l$.

%
%
\subsection{The constrained Riemann solver $\Rsol^q_{2}$}
\label{sse:RS2}

In this part we describe the Riemann solver $\Rsol^q_2$, 
which conserves only the car density $\rho$
at $x=0$.

Fix $(\rho^l, v^l) \in \dom$, $(\rho^r, v^r) \in \dom$. The Riemann solver
$\Rsol^q_2$ is defined as follows.

\begin{enumerate}
\item If $f_1(\Rsol \left(\sx, \dx \right)(0)) \le q$, then we put
  \begin{equation}
    \label{eq:rs2_case1}
    \Rsol^q_2 \left( \sx, \dx \right) (x) = \Rsol \left( \sx, \dx \right) (x)
  \end{equation}
  for every $x \in \R$.

\item If $f_1(\Rsol \left(\sx, \dx \right)(0)) > q$, then
  \begin{equation}
    \label{eq:rs2_case2}
    \Rsol^q_2 \left( \sx, \dx \right) (x) = \left\{
      \begin{array}{ll}
        \Rsol \left( \sx, (\hat \rho, \hat v
          ) \right) (x), & \textrm{ if } x<0,\\
        \Rsol \left( (\check \rho_2, \check v_2
          ), \dx \right) (x), 
        & \textrm{ if } x>0.
      \end{array}
    \right.
  \end{equation}
\end{enumerate}

\begin{prop}
  The Riemann solver $\Rsol^q_2$ satisfies
  \begin{displaymath}
    f_1(\Rsol^q_2(\sx, \dx)(0)) \le q
  \end{displaymath}
  for every $\sx$ and $\dx$. 
\end{prop}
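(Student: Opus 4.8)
The plan is to read the value of $f_1$ at $x=0$ directly off the construction of $\Rsol^q_2$. In the first case of its definition there is nothing to prove, since there $\Rsol^q_2(\sx,\dx)$ coincides with $\Rsol(\sx,\dx)$ and the hypothesis of that case is precisely $f_1(\Rsol(\sx,\dx)(0))\le q$. Hence the whole content is the second case, $f_1(\Rsol(\sx,\dx)(0))>q$, in which $\hat\rho$ and $\check\rho_1$ are well defined by Lemma~\ref{lem:I1_not_empty}. First I would observe that the two pieces glued at $x=0$ in~\eqref{eq:rs2_case2} are single elementary waves: since $\hat\rho\in I_1$ gives $\hat v=v^l+p(\rho^l)-p(\hat\rho)=L_1(\hat\rho;\rho^l,v^l)$, the state $(\hat\rho,\hat v)$ lies on the first Lax curve through $(\rho^l,v^l)$, so $\Rsol(\sx,(\hat\rho,\hat v))$ is a single $1$-wave; and since $\check v_2=q/\check\rho_2=v^r$, the state $(\check\rho_2,\check v_2)$ lies on the second Lax curve through $(\rho^r,v^r)$, so $\Rsol((\check\rho_2,\check v_2),\dx)$ is a single $2$-contact travelling with speed $v^r>0$ (possibly a constant state). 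Being contained in $\{x\ge 0\}$, it forces the trace of $\Rsol^q_2(\sx,\dx)$ at $0+$ to be $(\check\rho_2,\check v_2)$, where $f_1=\check\rho_2\check v_2=q$. So everything reduces to showing that the $1$-wave joining $(\rho^l,v^l)$ to $(\hat\rho,\hat v)$ lies entirely in $\{x\le 0\}$: then the trace at $0-$ is $(\hat\rho,\hat v)$, again with $f_1=\hat\rho\hat v=q$, whence $f_1(\Rsol^q_2(\sx,\dx)(0))=q\le q$.

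To place that $1$-wave I would work with $h(\rho):=\rho\,L_1(\rho;\rho^l,v^l)=\rho\bigl(v^l+p(\rho^l)-p(\rho)\bigr)$, which by~\eqref{eq:pressure-function} is strictly concave, vanishes at $\rho=0$, satisfies $h(\rho^l)=\rho^l v^l=f_1(\rho^l,v^l)$, and has derivative $h'(\rho)=\lambda_1\bigl(\rho,L_1(\rho;\rho^l,v^l)\bigr)$; let $\rho^*$ be its maximum point. Since by Lemma~\ref{lem:I1_not_empty} the set $I_1=\{\check\rho_1,\hat\rho\}$ consists of two distinct zeros of $h-q$ and $h(0)=0<q$, strict concavity gives $\check\rho_1<\rho^*<\hat\rho$, hence $h\ge q$ on $[\check\rho_1,\hat\rho]$, $h<q$ on $[0,\check\rho_1)$, and $\lambda_1(\hat\rho,\hat v)=h'(\hat\rho)<0$.

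The key step is the implication $f_1(\Rsol(\sx,\dx)(0))>q\;\Rightarrow\;\rho^l\ge\check\rho_1$, which I would prove by contraposition. Assume $\rho^l<\check\rho_1$, so that $\lambda_1(\rho^l,v^l)=h'(\rho^l)>0$, and let $(\rho^M,v^M)$ be the trace of $\Rsol(\sx,\dx)$ at $0+$; by the proof of Lemma~\ref{lem:I1_not_empty}, $v^M=L_1(\rho^M;\rho^l,v^l)$, so $f_1(\Rsol(\sx,\dx)(0))=h(\rho^M)$. Now inspect the classical solver: the $2$-contact has positive speed and does not reach $x=0$; the $1$-wave leaves $(\rho^l,v^l)$ with positive characteristic speed, so it is either a rarefaction or a shock of positive speed, in which case $\rho^M=\rho^l$, or a shock of nonpositive speed whose right state $\rho^m>\rho^l$ (as in~\eqref{eq:rho2m}) realizes $\rho^M$, in which case the Rankine--Hugoniot speed $\bigl(h(\rho^m)-h(\rho^l)\bigr)/(\rho^m-\rho^l)\le 0$ forces $h(\rho^m)\le h(\rho^l)$. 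In all cases $h(\rho^M)\le h(\rho^l)<q$, the last inequality because $\rho^l<\check\rho_1<\rho^*$ and $h$ is increasing on $[0,\rho^*]$; hence the constraint is not active.

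With $\rho^l\ge\check\rho_1$ granted, the $1$-wave from $(\rho^l,v^l)$ to $(\hat\rho,\hat v)$ is immediately placed: if $\rho^l\ge\hat\rho$ it is a rarefaction (or trivial), whose largest speed equals $h'(\hat\rho)<0$; if $\check\rho_1\le\rho^l<\hat\rho$ it is a $1$-shock, of speed $\dfrac{h(\hat\rho)-h(\rho^l)}{\hat\rho-\rho^l}=\dfrac{q-h(\rho^l)}{\hat\rho-\rho^l}\le 0$ because $h(\rho^l)\ge q$ on $[\check\rho_1,\hat\rho]$. Either way the wave sits in $\{x\le 0\}$, which is exactly what was needed. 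The main obstacle is precisely the implication in the previous paragraph: it is the only point at which one must inspect the structure of the classical Riemann solver $\Rsol$ rather than simply exploit the concavity of $h$ and the signs of the relevant wave speeds.
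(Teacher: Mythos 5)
Your argument is correct and is, in substance, the same reading-off-the-traces argument the paper has in mind: the paper simply asserts that the bound ``follows directly from the construction,'' and what you have done is supply the one genuinely non-trivial verification behind that assertion, namely that the $1$-wave from $\sx$ to $(\hat \rho, \hat v)$ has nonpositive speed (via $\rho^l \ge \check\rho_1$ and the concavity of $\rho\mapsto \rho L_1(\rho;\rho^l,v^l)$) and the $2$-contact from $(\check\rho_2,\check v_2)$ to $\dx$ has positive speed, so the traces at $0\mp$ are indeed $(\hat\rho,\hat v)$ and $(\check\rho_2,\check v_2)$, both with $f_1=q$. No gaps; the use of Lemma~\ref{lem:I1_not_empty} to justify $I_1=\{\check\rho_1,\hat\rho\}$ and the case analysis on the sign of the shock speed are exactly what is needed.
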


The proof follows directly from the construction of the Riemann
solver $\Rsol^q_2$.

\begin{remark}
  The Riemann solver $\Rsol^q_2$ conserves only the density at $x=0$;
  therefore it is in the same spirit of Riemann solvers introduced
  for traffic at junctions in~\cite{garavello-piccoli_ARModel_2006}.
\end{remark}

%
%
%
%
\section{Invariant domains for $\Rsol^q_1$ and $\Rsol^q_2$}
\label{se:invariant domains}

In this section, we want to describe the invariant regions for the
Aw-Rascle system with constraints. First, we recall that,
for every $0 < v_1 < v_2$, $0 < w_1 < w_2$ and $v_2 < w_2$, the set
\begin{equation}
  \label{eq:invariant_no_constraint}
  \ddom = \left\{
    (\rho, v) \in \dom: v_1 \le v \le v_2, \, w_1 \le v + p(\rho) \le w_2
  \right\}
\end{equation}
is invariant for~(\ref{eq:aw-rascle2}); see Figure~\ref{fig:invariant_domain}
and~\cite{MR784005}.
The hypothesis $v_2 < w_2$ implies that the Riemann invariants
$w = w_2$ and $z = v_2$ intersect in $\dom$ at a point different from
the origin.
For a given $q > 0$, we define the function of class $C^2(]0, +\infty[)$
\begin{equation}
  \label{eq:h_q}
  \begin{array}{rcc}
    h_q :\, ]0, +\infty[ & \longrightarrow & \R \\
    v & \longmapsto & p \left( \frac{q}{v} \right) + v,
  \end{array}
\end{equation}
which gives the value of the Riemann invariant $w$
of the point $(\tilde \rho, v) \in \dom$ such that $\tilde \rho v = q$.
Indeed we have that $h_q(v) = w$ if and only if $w = v + p(\rho)$ with
$\rho v = q$.

\begin{lemma}
  \label{le:h_q}
  Fix $q>0$ and assume~(\ref{eq:pressure-function}).
  There exists $\bar v = \bar v (q)> 0$ such that the function
  $h_q(v)$ is strictly decreasing in $]0, \bar v[$ and
  strictly increasing in $]\bar v, +\infty[$.
\end{lemma}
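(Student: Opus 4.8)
The plan is to analyze $h_q$ by computing its derivative and showing it vanishes exactly once, changing sign from negative to positive. First I would differentiate: since $h_q(v) = p(q/v) + v$, we get
\[
h_q'(v) = 1 - \frac{q}{v^2}\, p'\!\left(\frac{q}{v}\right).
\]
Writing $\rho = q/v$ (so $v = q/\rho$ and as $v$ ranges over $]0,+\infty[$, $\rho$ ranges over $]0,+\infty[$ in the opposite order), this becomes $h_q'(v) = 1 - \frac{\rho^2}{q} p'(\rho)$, which has the sign of $q - \rho^2 p'(\rho) = q - \rho\cdot\bigl(\rho p'(\rho)\bigr)$.

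Next I would exploit the convexity hypothesis in~(\ref{eq:pressure-function}): $\rho \mapsto \rho p(\rho)$ strictly convex means $(\rho p(\rho))'' = 2p'(\rho) + \rho p''(\rho) > 0$, hence $g(\rho) := \rho^2 p'(\rho)$ has derivative $g'(\rho) = 2\rho p'(\rho) + \rho^2 p''(\rho) = \rho\bigl(2p'(\rho) + \rho p''(\rho)\bigr) > 0$ for $\rho > 0$. Thus $g$ is strictly increasing on $]0,+\infty[$, with $g(0^+) = 0$ (since $p'(0) \ge 0$ is finite and $\rho^2 \to 0$) and $g(\rho) \to +\infty$ as $\rho \to +\infty$ (because $g(\rho) \ge g(1) + \int_1^\rho g'(s)\,ds$ and in fact $\rho^2 p'(\rho) \ge \rho^2 p'(1)$ for $\rho \ge 1$ if $p'$ is increasing — more carefully, $g$ increasing and unbounded since $\rho p(\rho)$ strictly convex forces $(\rho p(\rho))' \to +\infty$, and $g(\rho) = \rho(\rho p(\rho))' - \rho p(\rho) + \ldots$; simplest is to note $g(\rho) \ge g(\rho_0) \cdot$ something, or just argue $g$ strictly increasing on an unbounded interval with $g>0$, so $\lim_{\rho\to\infty} g(\rho)$ exists in $]0,+\infty]$, and if it were finite, combined with $g(0^+)=0$ one still gets a unique crossing — actually I only need the crossing, so boundedness at infinity is the one point to nail down). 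Since $g$ is a strictly increasing bijection from $]0,+\infty[$ onto its range (an interval containing values both below and, for $q$ fixed, we need above $q$), there is a unique $\bar\rho > 0$ with $g(\bar\rho) = q$, i.e. $\bar\rho^2 p'(\bar\rho) = q$; set $\bar v = \bar v(q) = q/\bar\rho$.

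Then for $v < \bar v$ we have $\rho = q/v > \bar\rho$, so $g(\rho) > q$ and $h_q'(v) < 0$; for $v > \bar v$ we have $\rho < \bar\rho$, $g(\rho) < q$, $h_q'(v) > 0$. This gives strict monotonicity of $h_q$ on $]0,\bar v[$ and $]\bar v, +\infty[$ respectively, as claimed. The $C^2$ regularity stated in~(\ref{eq:h_q}) is immediate from $p \in C^2$ and $v \mapsto q/v$ being smooth on $]0,+\infty[$. The main obstacle is the somewhat delicate point of establishing that $g(\rho) = \rho^2 p'(\rho)$ actually reaches the value $q$, i.e. that $g(\rho) \to +\infty$ (or at least exceeds $q$); I expect this to follow cleanly from writing $(\rho p(\rho))' = p(\rho) + \rho p'(\rho)$ being strictly increasing and unbounded by strict convexity of $\rho p(\rho)$ on $[0,+\infty[$, and then bounding $g$ below in terms of $(\rho p(\rho))'$ — but I should verify this does not secretly require an extra growth assumption on $p$. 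If $g$ turned out bounded, the statement of the lemma would implicitly require $q$ in the range of $g$; reading the hypotheses, I believe the intended argument is the monotonicity-of-$g$ one above, so I would present that and flag the growth step as the place needing care.
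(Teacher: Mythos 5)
Your argument is essentially the paper's, just organized around $h_q'$ and the substitution $\rho=q/v$ instead of around $h_q''$: the paper computes $h_q''(v)=\frac{q}{v^3}\bigl[2p'(q/v)+\frac{q}{v}p''(q/v)\bigr]>0$, which is exactly the statement that your $g(\rho)=\rho^2p'(\rho)$ is strictly increasing, so the two reductions coincide. The one step you leave open --- that $g$ actually exceeds the value $q$, i.e.\ that $h_q'$ is negative somewhere --- is indeed the only nontrivial point. The paper closes it by asserting that (\ref{eq:pressure-function}) forces $\lim_{\rho\to+\infty}p(\rho)=+\infty$, hence $\lim_{v\to0^+}h_q(v)=+\infty$, which combined with $h_q'$ increasing to $1$ produces the sign change. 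If you grant that $p$ is unbounded, your version follows at once: since $g$ is increasing, $p(\rho)-p(1)=\int_1^\rho g(s)s^{-2}\,ds\le\lim_{s\to+\infty}g(s)$, so $p$ unbounded forces $\lim g=+\infty$ and in particular $g$ crosses $q$.

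Your suspicion about a hidden growth assumption is, however, well founded, and here your write-up is more careful than the paper's. The paper justifies $p(\rho)\to+\infty$ by saying that $p\le M$ would give $\rho p(\rho)\le M\rho$, which is claimed to contradict strict convexity of $\rho\mapsto\rho p(\rho)$; but a strictly convex function vanishing at the origin can perfectly well lie below a line through the origin. Concretely, $p(\rho)=M\bigl(1-(1-e^{-\rho})/\rho\bigr)$ (with $p(0)=0$) satisfies all of (\ref{eq:pressure-function}), since $(\rho p(\rho))''=Me^{-\rho}>0$ and $p'(\rho)=M\bigl(1-(1+\rho)e^{-\rho}\bigr)/\rho^2>0$, yet $p\le M$ and $g(\rho)=M\bigl(1-(1+\rho)e^{-\rho}\bigr)<M$; for $q\ge M$ one then has $h_q'>0$ on all of $]0,+\infty[$ and no $\bar v$ exists. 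So the lemma really does require the extra information that $p$ (equivalently $\rho^2p'(\rho)$) is unbounded --- true for the standard pressures $p(\rho)=\rho^\gamma$, but not a consequence of (\ref{eq:pressure-function}) as written. Modulo making that assumption explicit, your proof is correct and complete, and it is the same proof as the paper's.
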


\begin{proof}
  We have
  \begin{displaymath}
    h_q''(v) = \frac{q}{v^3} \left[
      2 p'\left( \frac{q}{v} \right) + 
      \frac{q}{v} p''\left( \frac{q}{v} \right)
    \right]
  \end{displaymath}
  and so, by~(\ref{eq:pressure-function}), we deduce that
  $h_q''(v) > 0$ for every $v>0$; this means that $h_q'(v)$ is a
  strictly increasing function.
  Note also that~(\ref{eq:pressure-function}) implies that
  \begin{equation}
    \label{eq:pressure-infinity}
    \lim_{\rho \to +\infty} p(\rho) = + \infty.
  \end{equation}
  Indeed, if~(\ref{eq:pressure-infinity}) does not hold, then
  there exists $M > 0$ such that $p(\rho) \le M$
  and so $\rho p(\rho) \le M \rho$ for every
  $\rho >0$. This is not possible since the map
  $\rho \mapsto \rho p(\rho)$ is strictly convex.
  This implies that
  \begin{displaymath}
    \lim_{v \to 0^+} h_q(v) = +\infty \quad 
    \textrm{ and }\quad
    \lim_{v \to +\infty} h_q'(v) = 1;
  \end{displaymath}
  hence, since $h'_q$ is a strictly increasing function, 
  there exists a unique $\bar v > 0$ such that $h_q'(\bar v) = 0$.
  Therefore $h_q$ is strictly decreasing in $]0, \bar v[$ and
  strictly increasing in $]\bar v, +\infty[$.
\end{proof}

\begin{figure}
  \centering
  \begin{psfrags}
    \psfrag{v}{$v_1$} \psfrag{vv}{$v_2$}
    \psfrag{w}{\hspace{.8cm}$w_1$} \psfrag{ww}{\hspace{.8cm}$w_2$}
    \psfrag{vrho}{\hspace{-.2cm}$\rho v$} \psfrag{rho}{$\rho$}
    \includegraphics[width=8cm]{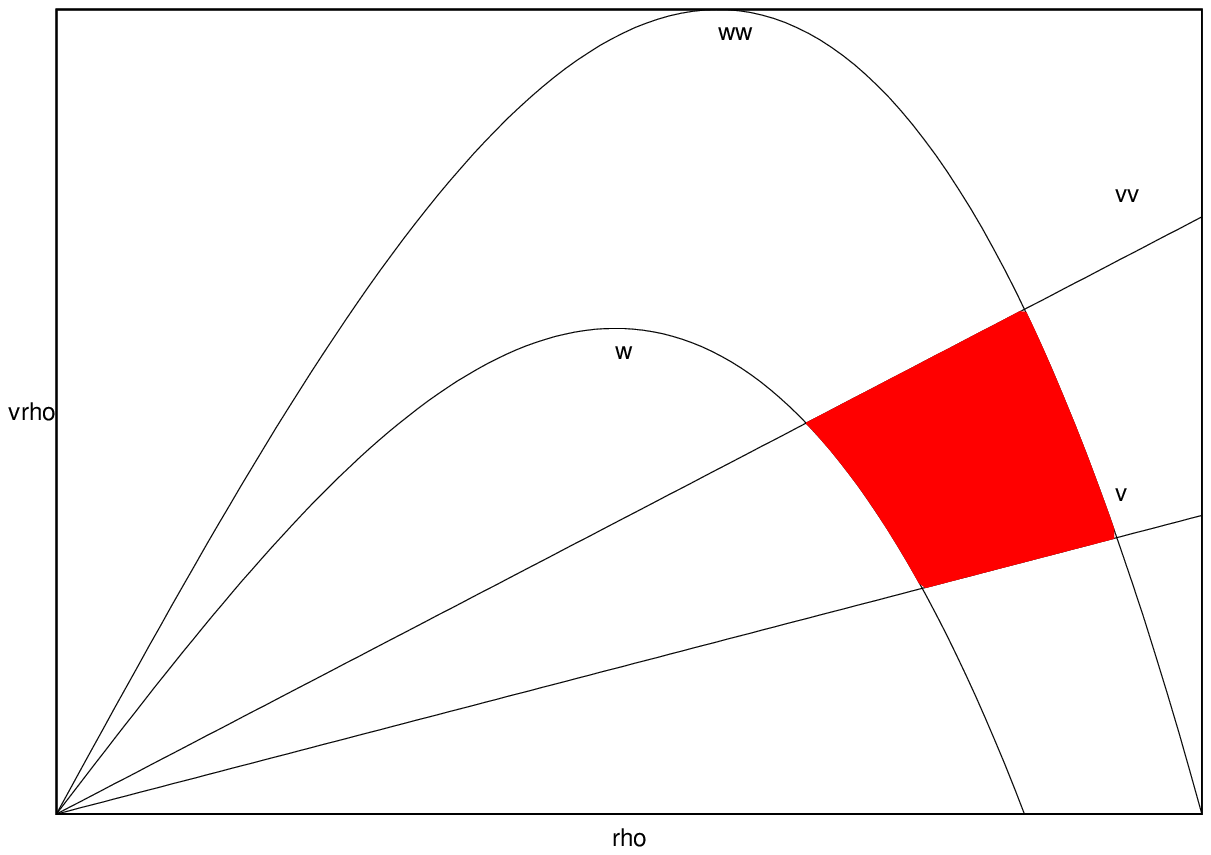}
  \end{psfrags}
  \caption{The invariant domain $\ddom$.}
  \label{fig:invariant_domain}
\end{figure}

\begin{prop}
  Fix $0 < v_1 < v_2$, $0 < w_1 < w_2$, $v_2 < w_2$ and $q > 0$.
  If $h_q(v) \ge w_2$ for every $v \in [v_1, v_2]$, then $\ddom$
  is invariant for both the Riemann solvers $\Rsol^q_1$ and $\Rsol^q_2$. 
\end{prop}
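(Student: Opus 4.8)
The plan is to notice that the hypothesis $h_q(v)\ge w_2$ on $[v_1,v_2]$ rules out the constrained branch (case~2) in the definitions of $\Rsol^q_1$ and $\Rsol^q_2$ whenever the Riemann data lie in $\ddom$: on $\ddom$ both constrained solvers then coincide with the classical solver $\Rsol$, for which $\ddom$ is already invariant, and the conclusion is immediate.

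The main step is to rephrase the hypothesis as a uniform flux bound on the domain. I claim that ``$h_q(v)\ge w_2$ for all $v\in[v_1,v_2]$'' is equivalent to ``$\rho v\le q$ for all $(\rho,v)\in\ddom$'', and only the forward implication is needed. To prove it, fix $(\rho,v)\in\ddom$. By~\eqref{eq:pressure-function} one has $p(\rho)\to+\infty$ as $\rho\to+\infty$ (this is established in the proof of Lemma~\ref{le:h_q}), so $p$ is a strictly increasing bijection of $[0,+\infty[$ onto itself and $p^{-1}$ is well defined and increasing. Since $v\le v_2<w_2$ we have $w_2-v>0$, and $v+p(\rho)\le w_2$ gives $\rho\le p^{-1}(w_2-v)$, hence $\rho v\le v\,p^{-1}(w_2-v)$. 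On the other hand $h_q(v)=p(q/v)+v\ge w_2$ gives $p(q/v)\ge w_2-v$, and applying $p^{-1}$ yields $q/v\ge p^{-1}(w_2-v)$, i.e.\ $v\,p^{-1}(w_2-v)\le q$. Chaining the two inequalities gives $\rho v\le q$.

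The conclusion is then routine. Let $\sx,\dx\in\ddom$. The set $\ddom$ is closed and, as recalled above, invariant for~\eqref{eq:aw-rascle2} and hence for the classical Riemann solver $\Rsol$, so the whole profile $x\mapsto\Rsol(\sx,\dx)(x)$ takes values in $\ddom$; in particular $\Rsol(\sx,\dx)(0)\in\ddom$, and by the previous step $f_1(\Rsol(\sx,\dx)(0))\le q$. Therefore case~1 applies in the definitions of both $\Rsol^q_1$ and $\Rsol^q_2$, so $\Rsol^q_1(\sx,\dx)=\Rsol^q_2(\sx,\dx)=\Rsol(\sx,\dx)$, whose values lie in $\ddom$. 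This is precisely the invariance of $\ddom$ for $\Rsol^q_1$ and $\Rsol^q_2$.

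I do not foresee a genuine difficulty. The one point to keep in mind is that the flux bound must be verified on the \emph{whole} region $\ddom$ and not merely at the trace state, which is why I single out the pointwise inequality $v\,p^{-1}(w_2-v)\le q$ on $[v_1,v_2]$ as the actual content of the assumption; an alternative route, arguing via the monotonicity of $h_q$ from Lemma~\ref{le:h_q} instead of $p^{-1}$, would also work but is longer.
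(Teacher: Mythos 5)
Your proof is correct and takes essentially the same route as the paper: the hypothesis $h_q(v)\ge w_2$ on $[v_1,v_2]$ is converted into the uniform bound $\sup\{f_1(\rho,v):(\rho,v)\in\ddom\}\le q$, so both constrained solvers fall into case~1 and coincide on $\ddom$ with the classical solver $\Rsol$, for which $\ddom$ is already invariant. The paper states the flux bound more tersely (as a supremum over the level set $v+p(\rho)=w_2$), and your explicit chain of inequalities via $p^{-1}$ is just a spelled-out version of that same step.
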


\begin{proof}
  The hypothesis $h_q(v) \ge w_2$ for every $v \in [v_1, v_2]$ implies
  that
  \begin{displaymath}
    \sup \left\{
      f_1 (\rho,v) : (\rho,v) \in \ddom, v \in [v_1, v_2], v+p(\rho)= w_2
    \right\} \le q
  \end{displaymath}
  and so
  \begin{displaymath}
    \sup \left\{
      f_1 (\rho,v) : (\rho,v) \in \ddom  \right\} \le q.
  \end{displaymath}
  Therefore the Riemann solvers $\Rsol^q_1$ and $\Rsol^q_2$
  in the domain $\ddom$ coincide with $\mathcal{RS}$.
\end{proof}

%
%

\subsection{The Riemann solver $\Rsol^q_1$}

The next proposition describes the invariant domains for $\Rsol^q_1$.

\begin{prop}
  \label{prop:rs2_cns}
  Fix $0 < v_1 < v_2$, $0 < w_1 < w_2$, $v_2 < w_2$ and $q > 0$.
  Assume~(\ref{eq:pressure-function}) and 
  that there exists $\bar v \in [v_1, v_2]$ such that
  $h_q(\bar v) < w_2$.
  The set
  $\ddom$ is invariant for the Riemann solver $\Rsol^q_1$,
  if and only if
  \begin{equation}
    \label{eq:cns-rs2}
    h_q(v_1) \ge w_2 \quad \textrm{ and } \quad h_q(v_2) \ge w_2;
  \end{equation}
  see Figure~\ref{fig:inv_domain_rs2}.
\end{prop}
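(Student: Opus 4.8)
The plan is to prove the two implications separately. Throughout I use two elementary facts about the second case of $\Rsol^q_1$: (i)~when that case occurs, Lemma~\ref{lem:I1_not_empty} gives $I_1=\{\check\rho_1,\hat\rho\}$ with $\check\rho_1<\hat\rho$, hence $\hat v<\check v_1$; (ii)~by~\eqref{eq:I_1}--\eqref{eq:hat_check} the points $(\hat\rho,\hat v)$ and $(\check\rho_1,\check v_1)$ both lie on $\{\rho v=q\}$ and both have $w$-component equal to $w^l:=v^l+p(\rho^l)$, so that $h_q(\hat v)=h_q(\check v_1)=w^l$; thus $\hat v$ and $\check v_1$ are the two distinct roots of $h_q(\cdot)=w^l$, and by Lemma~\ref{le:h_q} the smaller one $\hat v$ lies in $]0,\bar v[$ and the larger one $\check v_1$ in $]\bar v,+\infty[$, where $\bar v$ denotes the minimizer of $h_q$.

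\emph{Sufficiency.} Assume~\eqref{eq:cns-rs2}. Together with the standing assumption that $h_q<w_2$ somewhere on $[v_1,v_2]$, this forces $\bar v\in\,]v_1,v_2[$: if instead $\bar v\le v_1$ or $\bar v\ge v_2$, monotonicity of $h_q$ on $[v_1,v_2]$ would make $\min_{[v_1,v_2]}h_q$ equal to $h_q(v_1)$ or $h_q(v_2)$, both $\ge w_2$ by~\eqref{eq:cns-rs2}, a contradiction. Now fix data $(\rho^l,v^l),(\rho^r,v^r)\in\ddom$. In the first case of $\Rsol^q_1$ we have $\Rsol^q_1=\Rsol$ and conclude by the classical invariance of $\ddom$ for $\Rsol$ (no constraint) recalled at the start of this section. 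In the second case it suffices to show $(\hat\rho,\hat v),(\check\rho_1,\check v_1)\in\ddom$, since then the pieces $\Rsol((\rho^l,v^l),(\hat\rho,\hat v))$ and $\Rsol((\check\rho_1,\check v_1),(\rho^r,v^r))$ both take values in $\ddom$, and hence so does $\Rsol^q_1$. The $w$-component of both states is $w^l\in[w_1,w_2]$. For the $v$-components, $h_q(\hat v)=w^l\le w_2\le h_q(v_1)$ with $\hat v,v_1\in\,]0,\bar v[$ and $h_q$ strictly decreasing there gives $\hat v\ge v_1$, while $\hat v<\bar v<v_2$; symmetrically $h_q(\check v_1)=w^l\le w_2\le h_q(v_2)$ with $\check v_1,v_2\in\,]\bar v,+\infty[$ and $h_q$ strictly increasing there gives $\check v_1\le v_2$, while $\check v_1>\bar v>v_1$. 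So both states lie in $\ddom$.

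\emph{Necessity.} I argue by contraposition: suppose~\eqref{eq:cns-rs2} fails, say $h_q(v_1)<w_2$ (the case $h_q(v_2)<w_2$ being symmetric). Choose as Riemann data the ``upper left corner'' $(\rho^l,v^l)=(\rho^r,v^r)=(p^{-1}(w_2-v_1),v_1)$, which belongs to $\ddom$ since its $w$-value is $w_2$ and $p$ is a strictly increasing bijection of $[0,+\infty[$ (see~\eqref{eq:pressure-infinity}). For this constant datum $\Rsol$ is constant and $f_1(\Rsol(\cdot)(0))=v_1\,p^{-1}(w_2-v_1)>q$, precisely because $w_2>h_q(v_1)$; hence the second case of $\Rsol^q_1$ applies, with $w^l=w_2$. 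From~(ii) and $h_q(\hat v)=w_2>h_q(v_1)$, monotonicity of $h_q$ forces $\hat v<v_1$, so $(\hat\rho,\hat v)\notin\ddom$. Here $\Rsol((\rho^l,v^l),(\hat\rho,\hat v))$ reduces to a single $1$-wave, which is a shock because $\hat\rho>\rho^l$ (both states having $w=w_2$ and $\hat v<v_1$); its speed is $s=(q-f_1(\rho^l,v^l))/(\hat\rho-\rho^l)<0$ by Rankine--Hugoniot, so $\Rsol^q_1(\cdot)(x)=(\hat\rho,\hat v)\notin\ddom$ for every $x\in\,]s,0[$, contradicting invariance. The symmetric argument for $h_q(v_2)<w_2$ uses the ``upper right corner'' $(p^{-1}(w_2-v_2),v_2)$, produces $\check v_1>v_2$, and exhibits a positive-speed $1$-shock in the $x>0$ part of $\Rsol^q_1$, so that the value $(\check\rho_1,\check v_1)\notin\ddom$ is attained for $x$ slightly positive.

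The main obstacle is the necessity direction: beyond locating data for which $(\hat\rho,\hat v)$ or $(\check\rho_1,\check v_1)$ leaves $\ddom$, one must verify the sign of the relevant shock speed to be certain the offending state is genuinely realized by $\Rsol^q_1$ on an interval of $x$'s and not hidden behind another wave. Taking the corner states of $\ddom$ as data is precisely what makes both the threshold identity $f_1>q\Leftrightarrow w_2>h_q(v_i)$ and the Rankine--Hugoniot speed computation transparent.
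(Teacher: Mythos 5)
Your proof is correct and follows essentially the same route as the paper: the same corner states $(p^{-1}(w_2-v_i),v_i)$ are used as test data for necessity, and sufficiency is reduced to checking that $(\hat\rho,\hat v)$ and $(\check\rho_1,\check v_1)$ stay in $\ddom$. You merely supply details the paper leaves implicit (the monotonicity argument placing $\hat v$ and $\check v_1$ in $[v_1,v_2]$, and the sign of the shock speed guaranteeing the offending trace is actually attained), which is a welcome but not essentially different elaboration.
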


\begin{proof}
  Clearly, if condition~(\ref{eq:cns-rs2}) holds, then
  the set $\ddom$ is invariant for $\Rsol^q_1$, since both
  $(\hat \rho, \hat v)$ and $(\check \rho_1, \check v_1)$
  belong to $\ddom$ for every possible choice of initial conditions in
  $\ddom$.

  Assume now that $\ddom$ is invariant for $\Rsol^q_1$.\\
  If $h_q (v_1) < w_2$, then
  denote with $(\rho^l, v^l) = (\rho^r, v^r) \in \ddom$ the
  solution to the system
  \begin{displaymath}
    \left \{
      \begin{array}{l}
        v^l + p(\rho^l) = w_2, \\
        v^l = v_1.
      \end{array}
    \right.
  \end{displaymath}
  By hypotheses, we deduce that $\rho^l v^l > q$ and so
  the trace of the Riemann solver $\rs1$ at the point $0-$ is given by
  $(\hat \rho, \hat v)$, which does not belong to
  $\ddom$, since $h_q(v_1) < w_2$. This argument shows that
  $h_q (v_1) \ge w_2$.\\
  If $h_q (v_2) < w_2$, then
  denote with $(\rho^l, v^l) = (\rho^r, v^r) \in \ddom$ the
  solution to the system
  \begin{displaymath}
    \left \{
      \begin{array}{l}
        v^l + p(\rho^l) = w_2, \\
        v^l = v_2.
      \end{array}
    \right.
  \end{displaymath}
  By hypotheses, we deduce that $\rho^l v^l > q$ and so
  the trace of the Riemann solver $\rs1$ at the point $0+$ is given by
  $(\check \rho_1, \check v_1)$, which does not belong to
  $\ddom$, since $h_q(v_2) < w_2$. This argument shows that
  $h_q (v_2) \ge w_2$. This completes the proof.
\end{proof}

\begin{figure}
  \centering
  \begin{psfrags}
    \psfrag{v}{$v_1$} \psfrag{vv}{$v_2$}
    \psfrag{w}{\hspace{.8cm}$w_1$} \psfrag{ww}{\hspace{.8cm}$w_2$}
    \psfrag{vrho}{\hspace{-.2cm}$\rho v$} \psfrag{rho}{$\rho$}
    \psfrag{q}{$q$}
    \includegraphics[width=8cm]{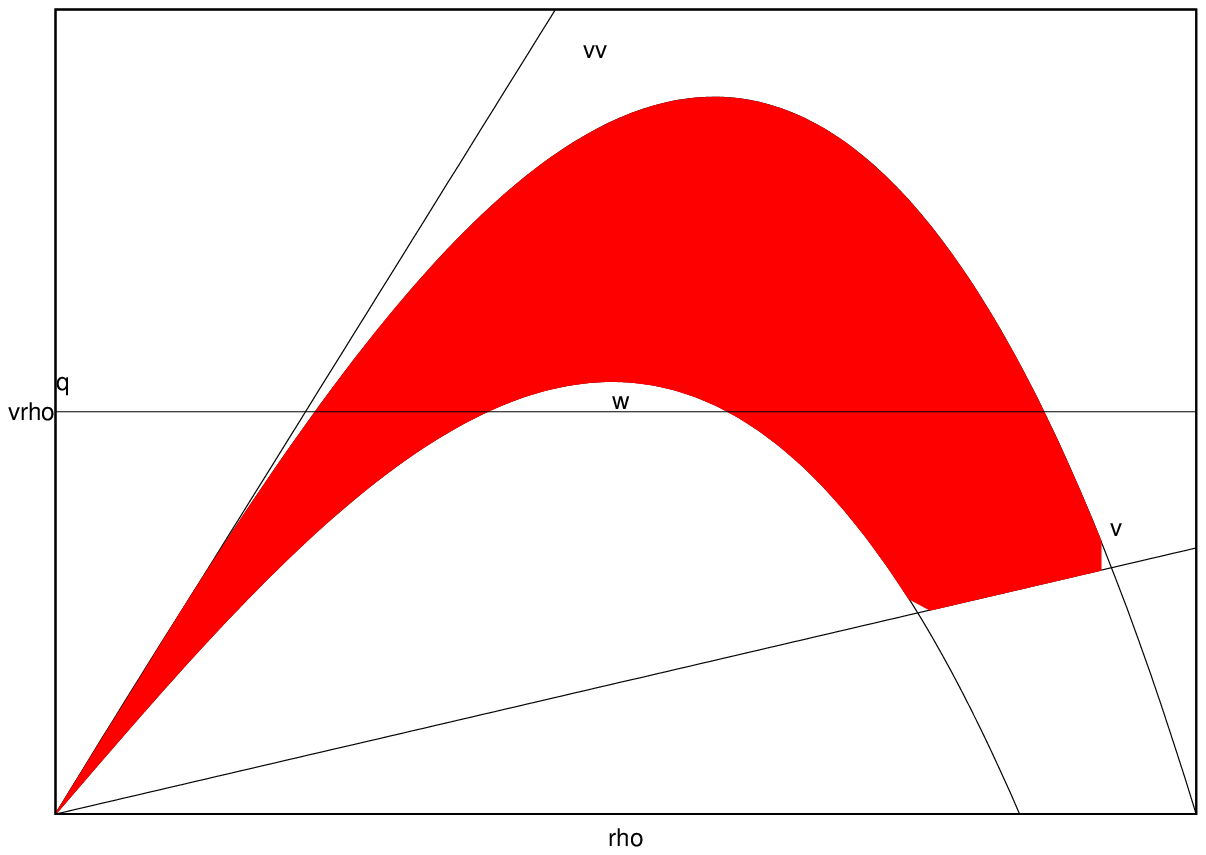}
  \end{psfrags}
  \caption{The invariant domain $\ddom$ for the Riemann solver
    $\mathcal{RS}_1^q$.}
  \label{fig:inv_domain_rs2}
\end{figure}

%
%
%
%
\subsection{The Riemann solver $\Rsol^q_2$}

In this part, we describe the invariant domains for $\Rsol^q_2$.
First let us introduce the following necessary conditions.
\begin{lemma}
  \label{le:rs1_cn1}
  Fix $0 < v_1 < v_2$, $0 < w_1 < w_2$, $v_2 < w_2$ and $q > 0$.
  Assume~(\ref{eq:pressure-function}) and
  that there exists $\bar v \in [v_1, v_2]$ such that
  $h_q(\bar v) < w_2$.
  If the set
  $\ddom$ is invariant for the Riemann solver $\Rsol^q_2$,
  then $h_q(v_1) \ge w_2$.
\end{lemma}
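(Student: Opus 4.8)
The plan is to mimic the argument used in the proof of Proposition~\ref{prop:rs2_cns}, exploiting the fact that, for initial data with a coincident left and right state, the second Riemann solver $\Rsol^q_2$ behaves exactly like $\Rsol^q_1$ on the left of $x=0$: in both cases the trace at $0-$ is $(\hat\rho,\hat v)$ whenever the unconstrained flux exceeds $q$. Thus the necessary condition $h_q(v_1)\ge w_2$ should follow by the very same construction. First I would argue by contradiction, assuming $h_q(v_1) < w_2$.

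Next I would select the point $(\rho^l,v^l)=(\rho^r,v^r)\in\ddom$ solving $v^l+p(\rho^l)=w_2$ and $v^l=v_1$; this point exists and lies on the upper boundary of $\ddom$ precisely because $v_2<w_2$ (so the Riemann invariants $w=w_2$ and $z=v_1$ meet in $\dom$). I then need to check that for this choice the unconstrained Riemann solver has flux strictly greater than $q$ at $x=0$. Since $(\rho^l,v^l)=(\rho^r,v^r)$, the classical solution $\Rsol((\rho^l,v^l),(\rho^r,v^r))$ is the constant state $(\rho^l,v^l)$, so its flux at $0$ equals $f_1(\rho^l,v^l)=\rho^l v^l$. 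By definition of $h_q$, the density $\tilde\rho$ with $\tilde\rho v_1=q$ satisfies $h_q(v_1)=v_1+p(\tilde\rho)$; the assumption $h_q(v_1)<w_2=v_1+p(\rho^l)$ together with strict monotonicity of $p$ forces $\rho^l>\tilde\rho$, hence $\rho^l v_1 > \tilde\rho v_1 = q$, i.e. $f_1(\rho^l,v^l)>q$. So case~2 in the definition of $\Rsol^q_2$ applies.

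Then, in case~2, formula~(\ref{eq:rs2_case2}) gives for $x<0$ the solution $\Rsol((\rho^l,v^l),(\hat\rho,\hat v))$, whose trace at $0-$ is $(\hat\rho,\hat v)$ since the $2$-waves all travel with strictly positive speed. It remains to show $(\hat\rho,\hat v)\notin\ddom$. By construction $\hat\rho=\max I_1$, $\hat v=q/\hat\rho$, so $(\hat\rho,\hat v)$ lies on the curve $\rho v=q$ and its $w$-coordinate is $h_q(\hat v)$. A short check using Lemma~\ref{le:h_q} and the choice $\bar v\in[v_1,v_2]$ with $h_q(\bar v)<w_2$ shows that $\hat v\le v_1$ (the $1$-Lax curve through $(\rho^l,v^l)$ meets $\rho v = q$ at a speed no larger than $v_1$, because $\rho\mapsto\rho L_1(\rho;\rho^l,v^l)$ is concave with value $\rho^l v^l>q$ at $\rho^l$ and the larger root $\hat\rho$ corresponds to the smaller speed); combined with $h_q(v_1)<w_2$ and the monotonicity of $h_q$ on $]0,\bar v[$ this yields $h_q(\hat v)\le h_q(v_1)<w_2$ only if $\hat v\ge v_1$ is excluded — more carefully, since $h_q$ is decreasing on $]0,\bar v[$ and $\hat v < \bar v$, $h_q(\hat v)>h_q(v_1)$... so in fact one should instead argue directly that the $v$-coordinate $\hat v$ of the trace violates $w\ge w_1$ or uses $w(\hat\rho,\hat v)=w^l=w_2$ but $v=\hat v < v_1$, contradicting $v\ge v_1$. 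Either way, $(\hat\rho,\hat v)\notin\ddom$, contradicting invariance, and the lemma follows.

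The main obstacle I anticipate is precisely pinning down why $(\hat\rho,\hat v)$ escapes $\ddom$: one must keep careful track of which of the two intersection points of the $1$-Lax curve with the level set $\{f_1=q\}$ is $\hat\rho$ and relate its speed $\hat v$ to $v_1$ and to $\bar v$, then decide whether the violated constraint is $v\ge v_1$ or $w\le w_2$. In the analogous Proposition~\ref{prop:rs2_cns} the violated constraint was $w\le w_2$ via $h_q(v_1)<w_2$; here, because $\hat w=w^l=w_2$ by the conservation of $w$ across the left wave fan, the escape must instead be through $v\ge v_1$, so the clean statement is: $h_q(v_1)<w_2$ forces $\hat v<v_1$, hence $(\hat\rho,\hat v)\notin\ddom$. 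Verifying this inequality $\hat v<v_1$ from the concavity of $\rho\mapsto\rho L_1(\rho;\rho^l,v^l)$ and the position of $(\rho^l,v^l)$ on $\{w=w_2,\ z=v_1\}$ is the one genuinely computational point; everything else is a direct transcription of the previous proof.
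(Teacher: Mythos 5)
Your proposal is correct and follows essentially the same route as the paper: argue by contradiction, take coincident Riemann data on the boundary $w=w_2$ of $\ddom$, and show that the left trace $(\hat\rho,\hat v)$ escapes the domain. The paper chooses the test velocity $v^l=\bar v$ (so that $\rho^l v^l>q$ follows from the standing hypothesis $h_q(\bar v)<w_2$ rather than from the contradiction hypothesis), but your variant with $v^l=v_1$ works equally well, and your final identification that the violated constraint is $v\ge v_1$ rather than $w\le w_2$ --- since $\hat w=w_2$ while $\hat v<v_1$, the latter following from the strict concavity of $\rho\mapsto\rho\left(w_2-p(\rho)\right)$, which places $\rho^l$ strictly between $\check\rho_1$ and $\hat\rho$ --- is exactly the point the paper leaves implicit.
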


\begin{proof}
  Assume by contradiction that $h_q(v_1) < w_2$.
  Denote $(\rho^l, v^l) = (\rho^r, v^r) \in \ddom$ the
  solution to the system
  \begin{displaymath}
    \left \{
      \begin{array}{l}
        v^l + p(\rho^l) = w_2, \\
        v^l = \bar v.
      \end{array}
    \right.
  \end{displaymath}
  By hypotheses, we deduce that $\rho^l v^l > q$ and so
  the trace of the Riemann solver $\rs2$ at the point $0-$ is given by
  $(\hat \rho, \hat v)$, which does not belong to
  $\ddom$, since $h_q(v_1) < w_2$.
\end{proof}

\begin{lemma}
  \label{le:rs1_cn2}
  Fix $0 < v_1 < v_2$, $0 < w_1 < w_2$, $v_2 < w_2$ and $q > 0$.
  Assume~(\ref{eq:pressure-function}) and
  that there exists $\bar v \in [v_1, v_2]$ such that
  $h_q(\bar v) < w_2$.
  If the set
  $\ddom$ is invariant for the Riemann solver $\Rsol^q_2$,
  then $h_q(v) \ge w_1$ for every $v \in [v_1,v_2]$.
\end{lemma}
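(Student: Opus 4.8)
The plan is to argue by contradiction. Suppose that $h_q(v^*) < w_1$ for some $v^* \in [v_1, v_2]$; since $w_1 < w_2$ this gives in particular $h_q(v^*) < w_2$, and also $w_2 - v^* > 0$ (which in any case follows from $v^* \le v_2 < w_2$). The idea is to build a constant initial datum lying in $\ddom$ that triggers case~2 of $\Rsol^q_2$ and whose solution, for $x > 0$, exhibits the state $(\check\rho_2, \check v_2)$ on a set of positive measure; invariance of $\ddom$ will then force $(\check\rho_2, \check v_2) \in \ddom$, contradicting $h_q(v^*) < w_1$.

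First I would take $(\rho^l, v^l) = (\rho^r, v^r) := \bigl( p^{-1}(w_2 - v^*),\ v^* \bigr)$. This is well defined because $p$ is a strictly increasing bijection of $[0, +\infty)$ onto itself (using $p(0) = 0$, $p' > 0$, and the divergence $p(\rho) \to +\infty$ established in the proof of Lemma~\ref{le:h_q}). The point lies in $\ddom$: its velocity is $v^* \in [v_1, v_2]$ and its Riemann invariant equals $v^* + p\bigl(p^{-1}(w_2 - v^*)\bigr) = w_2 \in [w_1, w_2]$. Moreover $h_q(v^*) < w_2$ means $p(q/v^*) < w_2 - v^*$, that is, $q/v^* < p^{-1}(w_2 - v^*)$, hence $\rho^l v^l = v^*\, p^{-1}(w_2 - v^*) > q$. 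Since $(\rho^l, v^l) = (\rho^r, v^r)$, the classical solver returns the constant state, so $f_1\bigl( \Rsol((\rho^l, v^l), (\rho^r, v^r))(0) \bigr) = \rho^l v^l > q$ and we are in case~2 of the definition of $\Rsol^q_2$.

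Next, case~2 gives, for $x > 0$, $\Rsol^q_2\bigl( (\rho^l, v^l), (\rho^r, v^r) \bigr)(x) = \Rsol\bigl( (\check\rho_2, \check v_2), (\rho^r, v^r) \bigr)(x)$, where $\check v_2 = v^r = v^*$ and $\check\rho_2 = q/v^r = q/v^*$ (see the remark following~\eqref{eq:check_rho1}). The second Lax curve through any point is the level set $\{ v = \mathrm{const} \}$, since $L_2(\rho; \rho_0, v_0) \equiv v_0$; therefore $(\check\rho_2, \check v_2)$ and $(\rho^r, v^r)$, having equal velocity, are joined in the classical solver by a single $2$-contact discontinuity, travelling with speed $\lambda_2 = v^* > 0$. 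Consequently $\Rsol\bigl( (\check\rho_2, \check v_2), (\rho^r, v^r) \bigr)(x) = (\check\rho_2, \check v_2)$ for every $x \in (0, v^*)$. Invariance of $\ddom$ for $\Rsol^q_2$ then forces $(\check\rho_2, \check v_2) = (q/v^*, v^*) \in \ddom$, so $w_1 \le v^* + p(q/v^*) = h_q(v^*)$, contradicting $h_q(v^*) < w_1$.

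I expect the only genuinely delicate point to be the assertion that the state $(\check\rho_2, \check v_2)$ really appears, on a set of positive measure, in the constructed solution for $x > 0$; this rests on the identity $\check v_2 = v^r$ together with the elementary structure of the classical Riemann solver for~\eqref{eq:aw-rascle1}, namely that two states with equal velocity are connected purely by a $2$-wave. Everything else is routine: the monotonicity and bijectivity of $p$, the equivalence between $h_q(v^*) < w_2$ and $q/v^* < p^{-1}(w_2 - v^*)$, and the observation that a constant datum in $\ddom$ with $\rho v > q$ triggers case~2 of $\Rsol^q_2$. As in Lemma~\ref{le:rs1_cn1}, the standing hypothesis $h_q(\bar v) < w_2$ is not needed here, since negating the conclusion already yields $h_q(v^*) < w_1 < w_2$.
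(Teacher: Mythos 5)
Your proposal is correct and follows essentially the same route as the paper: argue by contradiction, take the constant datum with velocity $v^*$ and Riemann invariant $w_2$, check that $\rho^l v^l > q$ so that case~2 of $\Rsol^q_2$ is triggered, and observe that the right trace $(\check\rho_2,\check v_2)=(q/v^*,v^*)$ violates $w\ge w_1$. The extra details you supply (the explicit equivalence $h_q(v^*)<w_2 \Leftrightarrow \rho^l v^l>q$, the fact that the $2$-contact discontinuity has positive speed so the trace genuinely appears, and the remark that the standing hypothesis $h_q(\bar v)<w_2$ is not needed) are all consistent with, and merely flesh out, the paper's argument.
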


\begin{proof}
  Assume by contradiction that $h_q(\tilde v) < w_1$ for some
  $\tilde v \in [v_1, v_2]$.
  Denote $(\rho^l, v^l) = (\rho^r, v^r) \in \ddom$ the
  solution to the system
  \begin{displaymath}
    \left \{
      \begin{array}{l}
        v^l + p(\rho^l) = w_2, \\
        v^l = \tilde v.
      \end{array}
    \right.
  \end{displaymath}
  By hypotheses, we deduce that $\rho^l v^l > q$ and so
  the trace of the Riemann solver $\rs2$ at the point $0+$ is given by
  $(\check \rho_2, \check v_2)$, which does not belong to
  $\ddom$, since $\check v_2 = \tilde v$ and $h_q(\tilde v) < w_1$.
\end{proof}

\begin{figure}
  \centering
  \begin{psfrags}
    \psfrag{v}{$v_1$} \psfrag{vv}{$v_2$}
    \psfrag{w}{\hspace{.8cm}$w_1$} \psfrag{ww}{\hspace{.8cm}$w_2$}
    \psfrag{vrho}{\hspace{-.2cm}$\rho v$} \psfrag{rho}{$\rho$}
    \psfrag{q}{$q$}
    \includegraphics[width=8cm]{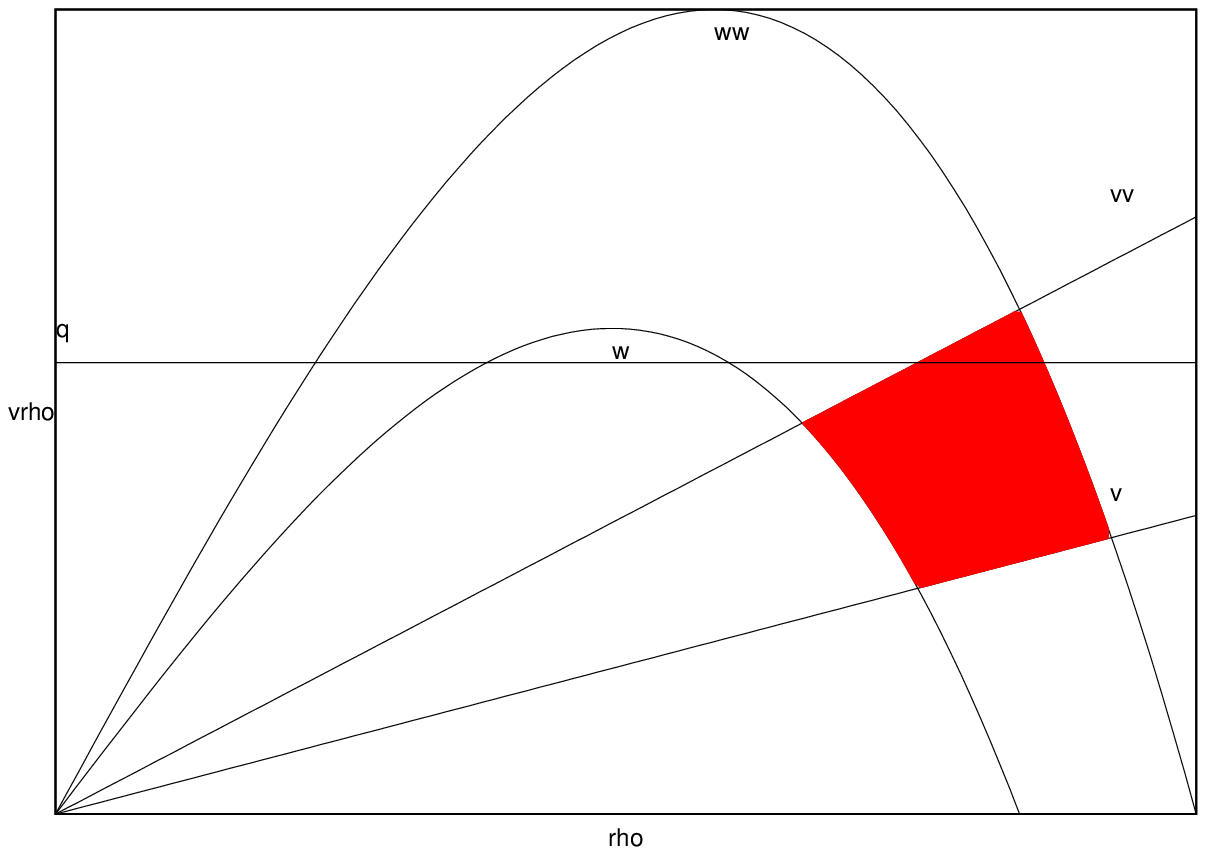}
  \end{psfrags}
  \caption{The invariant domain $\ddom$ for the Riemann solver
    $\mathcal{RS}_2^q$.}
  \label{fig:inv_domain_rs1}
\end{figure}

We have the following proposition about necessary and sufficient
conditions for a domain to be invariant for $\Rsol^q_2$.

\begin{prop}
  Fix $0 < v_1 < v_2$, $0 < w_1 < w_2$, $v_2 < w_2$ and $q > 0$.
  Assume~(\ref{eq:pressure-function}) and
  that there exists $\bar v \in [v_1, v_2]$ such that
  $h_q(\bar v) < w_2$.
  The set
  $\ddom$ is invariant for the Riemann solver $\Rsol^q_2$ 
  (see Figure~\ref{fig:inv_domain_rs1})
  if and only if 
  \begin{equation}
    \label{eq:inv_cond_rs1}
    h_q(v_1) \ge w_2 \quad \textrm{ and } \quad
    h_q(v) \ge w_1\quad \forall v \in [v_1, v_2].
  \end{equation}
\end{prop}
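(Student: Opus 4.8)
The plan is to prove the two implications separately, reusing as much of the machinery already developed for $\Rsol_1^q$ as possible. For the necessity of~\eqref{eq:inv_cond_rs1}, the two conditions are exactly the conclusions of Lemma~\ref{le:rs1_cn1} and Lemma~\ref{le:rs1_cn2}: if $\ddom$ is invariant for $\Rsol^q_2$, then $h_q(v_1)\ge w_2$ by the former, and $h_q(v)\ge w_1$ for all $v\in[v_1,v_2]$ by the latter. So one direction is immediate.

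For sufficiency, assume~\eqref{eq:inv_cond_rs1} and take arbitrary initial data $\sx,\dx\in\ddom$. If $f_1(\Rsol(\sx,\dx)(0))\le q$, then by~\eqref{eq:rs2_case1} the solver coincides with the classical $\Rsol$, and invariance follows from the fact that $\ddom$ is invariant for the unconstrained system~\eqref{eq:aw-rascle2}. The substantive case is $f_1(\Rsol(\sx,\dx)(0))>q$: here~\eqref{eq:rs2_case2} splits the solution into $\Rsol(\sx,(\hat\rho,\hat v))$ for $x<0$ and $\Rsol((\check\rho_2,\check v_2),\dx)$ for $x>0$. Since $\ddom$ is invariant for the classical solver, it suffices to check that the two new intermediate states $(\hat\rho,\hat v)$ and $(\check\rho_2,\check v_2)$ lie in $\ddom$, because then each of the two classical Riemann problems has endpoints in $\ddom$ and therefore takes values in $\ddom$.

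I would verify $(\hat\rho,\hat v)\in\ddom$ as follows. The point $(\hat\rho,\hat v)$ lies on the first Lax curve through $\sx$ and satisfies $f_1(\hat\rho,\hat v)=q$, so its $w$-value is $w^l\in[w_1,w_2]$ (it shares the Lagrangian marker with $\sx$); hence the constraint $w_1\le \hat v+p(\hat\rho)\le w_2$ is automatic. For the $v$-bounds: $\hat\rho=\max I_1$ is the larger root, so $\hat v=q/\hat\rho$ is the \emph{smaller} of the two speeds on that curve with flux $q$; combining $h_q(\hat v)=w^l\le w_2$ with the shape of $h_q$ from Lemma~\ref{le:h_q} (strictly decreasing then strictly increasing, with $h_q(v_1)\ge w_2$, $h_q(v_2)\ge w_2$, and $h_q(\bar v)<w_2$ for some $\bar v\in[v_1,v_2]$) forces $\hat v\ge v_1$; and $\hat v\le v^l\le v_2$ because along the first Lax curve through $\sx$ the density increases and the speed decreases as one moves toward higher flux up to the maximum, so the trace $(\hat\rho,\hat v)$ has speed no larger than $v^l$ (this uses that $f_1(\Rsol(\sx,\dx)(0))>q$ forces $\hat\rho>\rho^l$, i.e. one genuinely moves along the decreasing branch). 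For $(\check\rho_2,\check v_2)$ recall from~\eqref{eq:check_rho1} and the remark after it that $\check v_2=v^r\in[v_1,v_2]$, so the $v$-bounds are free; and its $w$-value is $h_q(\check v_2)=h_q(v^r)$, which lies in $[w_1,w_2]$ because $h_q(v^r)\ge w_1$ by the second half of~\eqref{eq:inv_cond_rs1}, while $h_q(v^r)\le w_2$: indeed $\check\rho_2=q/v^r$ and $f_1(\check\rho_2,v^r)=q$, and since the classical trace at $0+$ had flux $>q$ one checks $\check\rho_2\le\rho^r$, so $w$ at $(\check\rho_2,\check v_2)$ does not exceed $w^r\le w_2$.

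The main obstacle I expect is the bookkeeping in the last step — carefully justifying the monotonicity relations $\hat v\le v^l$ and $\check\rho_2\le\rho^r$ (equivalently $h_q(\check v_2)\le w^r$) from the hypothesis $f_1(\Rsol(\sx,\dx)(0))>q$, and making the geometric claims about the branches of $h_q$ and of the Lax curves precise. This is where Lemma~\ref{le:h_q} and the strict concavity of $\rho\mapsto\rho L_1(\rho;\rho^l,v^l)$ from Lemma~\ref{lem:I1_not_empty} do the real work; everything else is a reduction to the known invariance of $\ddom$ for the unconstrained solver $\Rsol$. Once both intermediate states are placed in $\ddom$, the proof is complete.
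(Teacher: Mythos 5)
Your overall architecture is the same as the paper's: necessity is exactly Lemmas~\ref{le:rs1_cn1} and~\ref{le:rs1_cn2}, and sufficiency reduces, via the invariance of $\ddom$ for the unconstrained solver, to showing that the two new traces $(\hat\rho,\hat v)$ and $(\check\rho_2,\check v_2)$ lie in $\ddom$. That reduction is fine. The problems are in how you place the traces in $\ddom$, and they are exactly the ``bookkeeping'' steps you flagged as the main obstacle. First, you invoke $h_q(v_2)\ge w_2$, which is \emph{not} among the hypotheses~(\ref{eq:inv_cond_rs1}); that is the condition for $\Rsol^q_1$ in Proposition~\ref{prop:rs2_cns}, and the whole point of this statement is that it is not required for $\Rsol^q_2$. (The conclusion $\hat v\ge v_1$ can still be salvaged from $h_q(v_1)\ge w_2\ge w^l=h_q(\hat v)$, convexity of $h_q$, and the fact that $\hat v$ is the smaller root, but as written your argument uses an unavailable hypothesis.) Second, the claim that $f_1(\Rsol(\sx,\dx)(0))>q$ forces $\hat\rho>\rho^l$ is false: the condition concerns the \emph{trace} at $x=0$, which in the rarefaction case is the sonic state maximizing $\rho L_1(\rho;\rho^l,v^l)$, not $(\rho^l,v^l)$ itself. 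Take $p(\rho)=\rho$, $q=2$, $(\rho^l,v^l)=(10,0.1)$ and a right state producing a transonic rarefaction: then $\rho^l v^l=1\le q$ while the sonic flux is $(10.1/2)^2>q$, and one computes $\hat\rho\approx 9.9<\rho^l$, so $\hat v>v^l$ and your route to $\hat v\le v_2$ collapses. The paper gets $\hat v\le v_2$ by contradiction: if $\hat v>v_2$, every point of the first Lax curve through $\sx$ lying in $\ddom$ has $\rho>\hat\rho$, hence flux $\le q$ by concavity of $\rho\mapsto\rho L_1$, and in particular the classical trace (which is in $\ddom$) would satisfy the constraint, contradicting the case assumption.

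The same confusion between ``trace at $0$'' and ``right datum'' breaks your treatment of $(\check\rho_2,\check v_2)$. You claim $\check\rho_2\le\rho^r$, i.e.\ $q\le \rho^r v^r$, ``since the classical trace at $0+$ had flux $>q$.'' But $(\rho^r,v^r)$ is separated from the trace by the $2$-contact (or sits at the end of a rarefaction), and its own flux can be well below $q$ while the trace flux exceeds $q$: with $p(\rho)=\rho$, $q=3$, $(\rho^l,v^l)=(1.5,3)$ and $(\rho^r,v^r)=(0.1,3)$, the trace at $0$ is the intermediate state $(1.5,3)$ with flux $4.5>q$, yet $\check\rho_2=q/v^r=1>0.1=\rho^r$, so $w(\check\rho_2,\check v_2)=4>w^r$ and your chain $h_q(v^r)\le w^r\le w_2$ fails at the first link. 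This is the genuinely delicate inequality ($h_q(v^r)\le w_2$, equivalently that the right trace does not overshoot the upper Riemann-invariant bound), and it cannot be obtained by comparison with $w^r$; the paper again argues by contradiction, showing that if $(\check\rho_2,\check v_2)\notin\ddom$ then every state the classical solver would place at $x=0+$ already satisfies $\rho v\le q$, so case~2 could not have occurred. You should rebuild both trace verifications on that contrapositive scheme (``trace outside $\ddom$ $\Rightarrow$ the classical solution respects the constraint'') rather than on the monotonicity relations $\hat\rho>\rho^l$ and $\check\rho_2\le\rho^r$, which are simply not consequences of the case-2 hypothesis.
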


\begin{proof}
  By Lemma~\ref{le:rs1_cn1} and Lemma~\ref{le:rs1_cn2}, we need
  to prove that condition~(\ref{eq:inv_cond_rs1}) is sufficient
  in order $\ddom$ be invariant for the Riemann solver  $\Rsol^q_2$.
  Thus we assume that condition~(\ref{eq:inv_cond_rs1}) holds.

  Since $\ddom$ is invariant for~(\ref{eq:aw-rascle2}), it is sufficient
  to prove that the left and the right traces at $x=0$ for
  $\Rsol^q_2$ belong to $\ddom$.
  So fix $(\rho^l, v^l)$ and $(\rho^r, v^r)$  in $\ddom$.
  If $\Rsol^q_2((\rho^l, v^l),(\rho^r, v^r))$ produces the classical solution,
  then we conclude.
  Assume therefore that $\Rsol^q_2((\rho^l, v^l),(\rho^r, v^r))$
  does not produce the classical solution and denote with
  $(\hat \rho, \hat v)$ and $(\check \rho_2, \check v_2)$ the left and
  right traces at $x=0$ for $\Rsol^q_1((\rho^l, v^l),(\rho^r, v^r))$.\\
  If $(\hat \rho, \hat v) \not \in \ddom$, then every point
  $(\rho, v)$ of the Lax curve of the first
  family through $(\rho^l, v^l)$ contained in $\ddom$ has the property
  that $\rho v \le q$ and so the Riemann solver gives the classical solution,
  since waves of the second family have strictly positive speed.
  This permits to prove that $(\hat \rho, \hat v) \in \ddom$.\\
  If $(\check \rho_2, \check v_2) \not \in \ddom$, then
  every point $(\rho, v)$ of the Lax curve of the second
  family through $(\rho^r, v^r)$ contained in $\ddom$ has the property
  that $\rho v \le q$ and so the Riemann solver gives the classical solution.
  In fact, if $v^l > v^r$, then a shock wave of the first family
  with strictly negative speed appears, if $v^l = v^r$, then
  no wave of the first family appears, whereas
  if $v^l < v^r$, then all the states $(\rho, v)$ of the rarefaction wave
  have flux $\rho v$ less than or equal to $q$.
  This permits to prove that $(\check \rho_2, \check v_2) \in \ddom$.

  The proof is thus completed.
\end{proof}

%
%
%
%
\section{Total variation estimates for $\Rsol^q_1$ and $\Rsol^q_2$}
\label{se:tv_estimates}

In this section we make a comparison between the two Riemann solvers
$\Rsol^q_1$ and $\Rsol^q_2$ in terms
of the changes in the total variation of various
quantities.

Fix $(\rho^l, v^l) , (\rho^r, v^r) \in \dom$. 
We denote with $\tilde \rho_1$ and $\tilde \rho_2$ respectively
the $\rho$-component of $\rs1$ and of $\rs2$.
Moreover we denote with $\tilde v_1$, $\tilde v_2$ respectively
the $v$-component of $\rs1$ and of $\rs2$.
Finally, we put $\tilde y_1 = \tilde \rho_1 (\tilde v_1 + p(\tilde \rho_1))$,
$\tilde y_2 = \tilde \rho_2 (\tilde v_2 + p(\tilde \rho_2))$,
$\tilde w_1 = \tilde v_1 + p(\tilde \rho_1)$,
$\tilde w_2 = \tilde v_2 + p(\tilde \rho_2)$.

In order to facilitate the reading of the following calculation,
we refer to Figures~\ref{fig:1min2} and~\ref{fig:2min1}.

\begin{figure}[t!]
  \centering
  \begin{psfrags}
    \psfrag{0}{$0$} \psfrag{rhom}{$\rho^m$}
    \psfrag{rho1}{$\check\rho_2$} \psfrag{rho2}{$\check\rho_1$} \psfrag{rhohat}{$\hat\rho$}
    \psfrag{q}{$q$} \psfrag{rho}{$\rho$}  \psfrag{rhov}{$\rho v$}
    \psfrag{L1}{$L_1$} \psfrag{L2}{$L_2$} 
    \includegraphics[height=50mm]{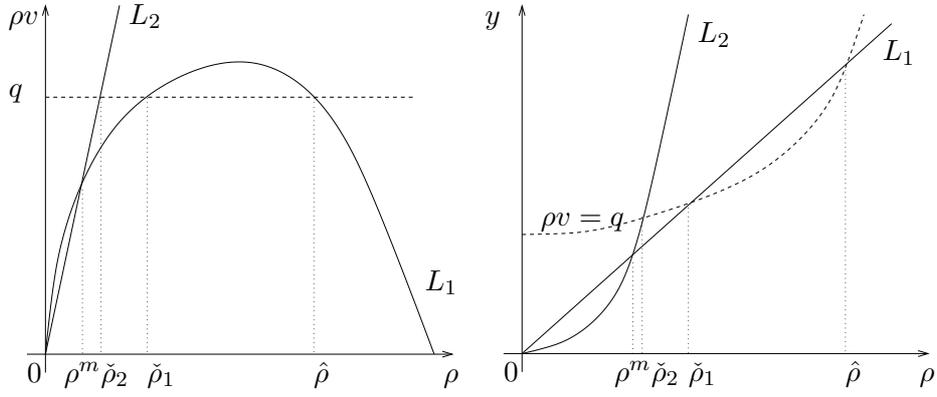}
  \end{psfrags}
  \hfil
  \begin{psfrags}
    \psfrag{0}{$0$} \psfrag{rhom}{$\rho^m$}
    \psfrag{rho1}{$\check\rho_2$} \psfrag{rho2}{$\check\rho_1$} \psfrag{rhohat}{$\hat\rho$}
    \psfrag{rv=qs}{$\rho v=q$} \psfrag{rho}{$\rho$}  \psfrag{y}{$y$}
    \psfrag{L2}{$L_1$} \psfrag{L1}{$L_2$} 
    \includegraphics[height=50mm]{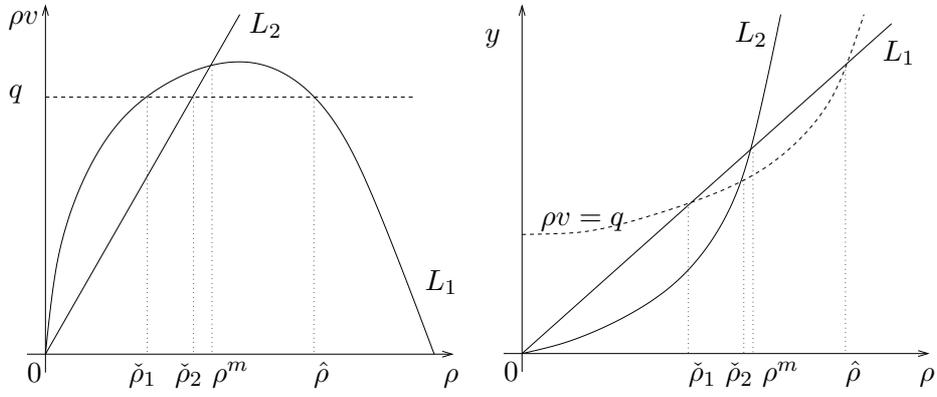}
  \end{psfrags}
  \caption{Notations used in the paper: case $\check\rho_1 > \check\rho_2$.}
  \label{fig:1min2}
\end{figure}

\vskip 10mm

\begin{figure}[h!]
  \centering
  \begin{psfrags}
    \psfrag{0}{$0$} \psfrag{rhom}{$\rho^m$}
    \psfrag{rho1}{$\check\rho_2$} \psfrag{rho2}{$\check\rho_1$} \psfrag{rhohat}{$\hat\rho$}
    \psfrag{q}{$q$} \psfrag{rho}{$\rho$}  \psfrag{rhov}{$\rho v$}
    \psfrag{L1}{$L_1$} \psfrag{L2}{$L_2$} 
    \includegraphics[height=50mm]{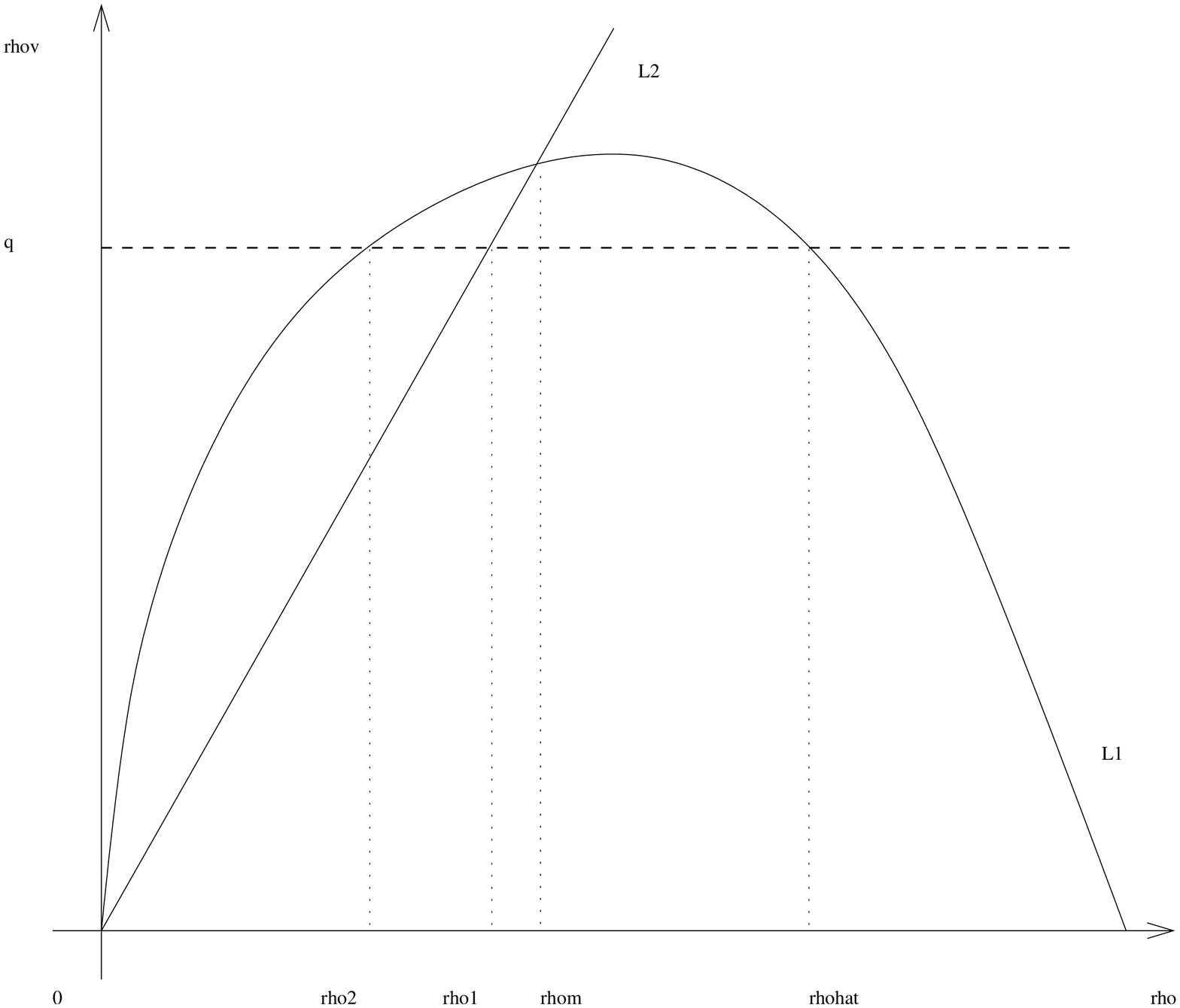}
  \end{psfrags}
  \hfil
  \begin{psfrags}
      \psfrag{0}{$0$} \psfrag{rhom}{$\rho^m$}
    \psfrag{rho1}{$\check\rho_2$} \psfrag{rho2}{$\check\rho_1$} \psfrag{rhohat}{$\hat\rho$}
    \psfrag{rv=qs}{$\rho v=q$} \psfrag{rho}{$\rho$}  \psfrag{y}{$y$}
    \psfrag{L2}{$L_1$} \psfrag{L1}{$L_2$} 
       \includegraphics[height=50mm]{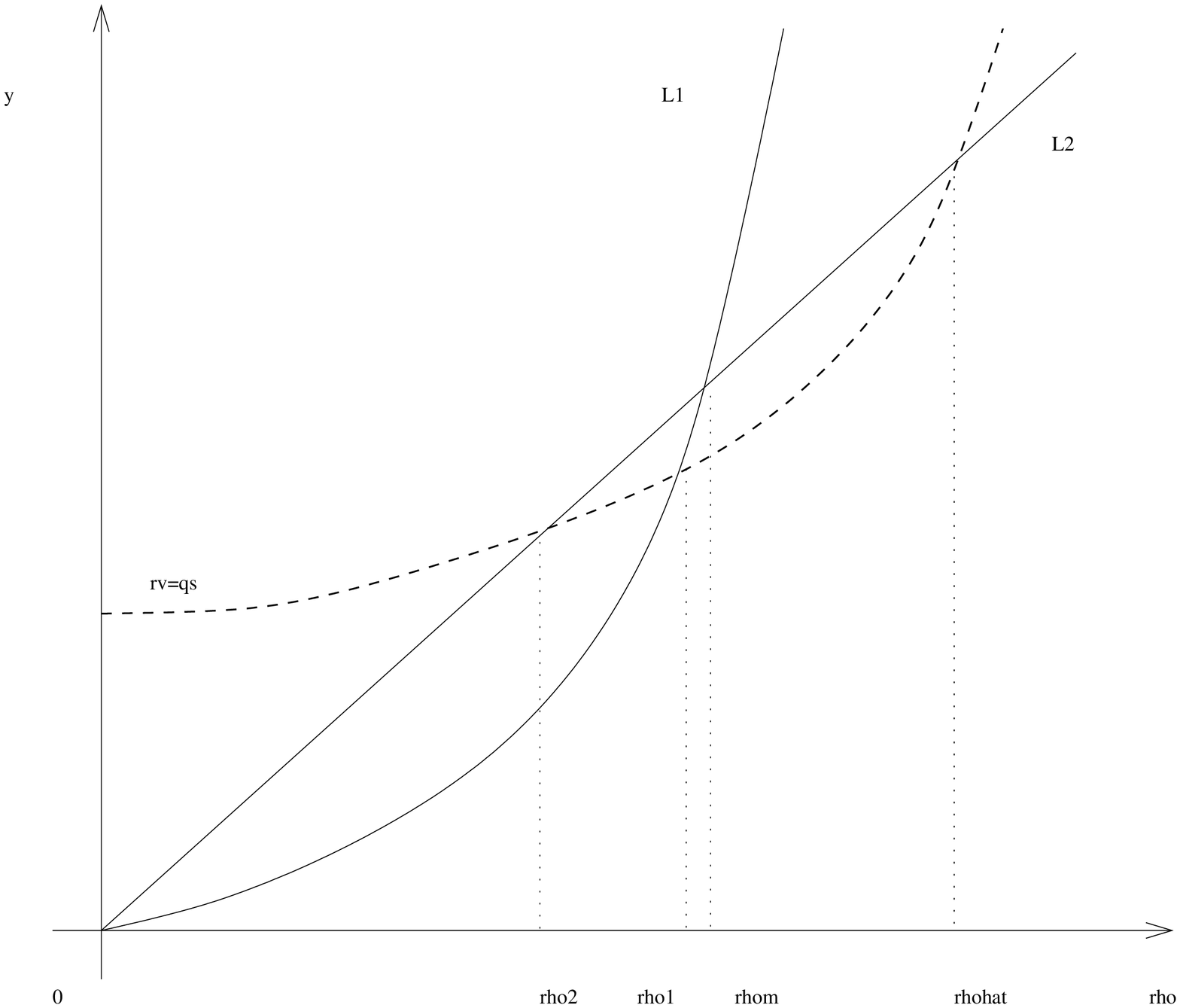}
  \end{psfrags}
  \caption{Notations used in the paper: case $\check\rho_1 < \check\rho_2$.}
  \label{fig:2min1}
\end{figure}

%
%
\subsection{Total variation of the density $\rho$}
\label{sse:tv_density}

This subsection deals with $\tv (\tilde \rho_1)$ and $\tv (\tilde \rho_2)$.
The following proposition holds.

\begin{prop}
  For every initial conditions $(\rho^l, v^l), (\rho^r, v^r) \in \dom$, we have that
  \begin{equation}
    \label{eq:tv_rho}
    \tv (\tilde \rho_1) \geq \tv (\tilde \rho_2).
  \end{equation}
\end{prop}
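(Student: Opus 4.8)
The plan is to compare the two Riemann solvers case by case, using the fact that in the constrained case ($f_1(\Rsol((\rho^l,v^l),(\rho^r,v^r))(0)) > q$) both solvers agree on the left half-line $x<0$, namely they both equal $\Rsol((\rho^l,v^l),(\hat\rho,\hat v))$. Hence the only difference in $\tv(\tilde\rho_i)$ comes from the portion $x \ge 0$, where $\rs1$ interpolates from $(\check\rho_1,\check v_1)$ to $(\rho^r,v^r)$ and $\rs2$ from $(\check\rho_2,\check v_2)$ to $(\rho^r,v^r)$. When the classical solver already respects the constraint, $\tilde\rho_1 = \tilde\rho_2$ and there is nothing to prove, so I would restrict to the nontrivial case and then split the analysis.

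First I would compute explicitly the right parts. Since the wave emanating at $x=0+$ into the region $x>0$ is a $1$-wave followed by a $2$-contact discontinuity, and $2$-contacts carry $v$ constant while changing $\rho$, the total variation of $\tilde\rho_i$ restricted to $x>0$ is a sum of: (i) the variation along the $1$-Lax curve from $\check\rho_i$ down (or up) to the intermediate density, and (ii) the jump in $\rho$ across the $2$-contact to reach $\rho^r$. The key observation is that $\check\rho_2 = q/v^r$ lies on $L_2(\cdot;\rho^r,v^r)$, so for $\rs2$ there is no $1$-wave on $x>0$ at all: the whole right part is just a single $2$-contact from $(\check\rho_2,v^r)$ to $(\rho^r,v^r)$, contributing $\tv = |\check\rho_2 - \rho^r|$. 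For $\rs1$ the right part is a (possibly trivial) $1$-rarefaction or $1$-shock from $\check\rho_1$ to the state $\rho^m$ with $v=v^r$, followed by a $2$-contact from $\rho^m$ to $\rho^r$; here $\rho^m = \max I_2$ as in \eqref{eq:rho2m}, and by monotonicity of the $1$-Lax curve one gets $\tv(\tilde\rho_1)|_{x>0} = |\check\rho_1 - \rho^m| + |\rho^m - \rho^r|$.

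Next I would exploit the geometry in the $(\rho,\rho v)$ plane (Figures~\ref{fig:1min2} and~\ref{fig:2min1}). Both $\check\rho_1$ and $\check\rho_2$ lie on the curve $\rho v = q$; moreover $\check\rho_1$ lies on $L_1(\cdot;\rho^l,v^l)$ and $\rho^m$ is the intersection of $L_1(\cdot;\rho^l,v^l)$ with $L_2(\cdot;\rho^r,v^r)$, while $\check\rho_2$ lies on $L_2(\cdot;\rho^r,v^r)$. Because $L_2$ is monotone in $\rho$ with the ordering $v^m = v^r$ and the point $(\check\rho_2,v^r)$ has $\check\rho_2 v^r = q < \rho^m L_2(\rho^m;\ldots)$ (using that the classical trace exceeds $q$), one obtains that $\check\rho_2$, $\rho^m$, $\rho^r$ and $\check\rho_1$ are ordered along the relevant curves in such a way that the triangle inequality becomes an equality-plus-slack:
\begin{displaymath}
  |\check\rho_1 - \rho^m| + |\rho^m - \rho^r| \;\ge\; |\check\rho_1 - \rho^r|
  \;\ge\; |\check\rho_2 - \rho^r|,
\end{displaymath}
where the last step uses that $\check\rho_2$ lies between $\rho^r$ and $\rho^m$ (equivalently between $\rho^r$ and $\check\rho_1$) on $L_2(\cdot;\rho^r,v^r)$. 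Adding the common left contribution $\tv(\tilde\rho_i)|_{x\le0}$, which is identical for the two solvers, yields \eqref{eq:tv_rho}.

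The main obstacle I anticipate is verifying rigorously the ordering/monotonicity claim — i.e. that $\check\rho_2$ always lies between $\rho^r$ and $\rho^m$ (hence between $\rho^r$ and $\check\rho_1$) — in all sub-configurations of the sign of $v^l - v^r$ and of whether the $1$-wave on the left is a shock or a rarefaction. This requires carefully tracking which branch of $I_1$ and $I_2$ is selected and checking the inequalities $\check\rho_2 \le \rho^m$ (or the reverse) from the strict concavity of $\rho \mapsto \rho L_1(\rho)$ and the linearity of $\rho \mapsto \rho L_2(\rho)/\rho = v^r$; the bookkeeping, rather than any deep idea, is the delicate part, and the two figures are meant precisely to organize it.
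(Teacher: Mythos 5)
Your overall strategy --- computing the total variation wave by wave on each side of the constraint and comparing via the triangle inequality and the ordering of the densities --- is essentially the paper's, but as written the argument has two genuine gaps.

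First, your decomposition of $\tv(\tilde\rho_i)$ drops the stationary jump at $x=0$. In the constrained case the solution is discontinuous there, jumping from the left trace $\hat\rho$ to the right trace $\check\rho_i$, so $\tv(\tilde\rho_i)$ contains the term $|\hat\rho-\check\rho_i|$ in addition to the common left contribution $|\rho^l-\hat\rho|$ and your right-hand contributions. These jump terms differ between the two solvers, and when $\check\rho_1>\check\rho_2$ (both densities being $\le\hat\rho$) the jump is strictly \emph{smaller} for $\Rsol^q_1$; hence proving $\tv(\tilde\rho_1)|_{x>0}\ge\tv(\tilde\rho_2)|_{x>0}$ does not by itself yield the proposition. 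Second, the step $|\check\rho_1-\rho^r|\ge|\check\rho_2-\rho^r|$ is false in general: in the configuration $\rho^m\le\check\rho_2\le\check\rho_1<\rho^r$ (which does occur, e.g.\ for large $\rho^r$ on the $2$-contact) one has $|\check\rho_1-\rho^r|=\rho^r-\check\rho_1<\rho^r-\check\rho_2=|\check\rho_2-\rho^r|$; there it is $\check\rho_1$, not $\check\rho_2$, that lies between the other two densities. The two errors point in opposite directions, and the correct bookkeeping gives, in the case $\check\rho_2\le\check\rho_1$, the identity $\tv(\tilde\rho_1)-\tv(\tilde\rho_2)=(\check\rho_2-\rho^m)+|\rho^m-\rho^r|-|\check\rho_2-\rho^r|\ge0$, which follows from the triangle inequality applied to the triple $(\check\rho_2,\rho^m,\rho^r)$ using $\rho^m\le\check\rho_2$: the jump term at $x=0$ is exactly what converts your $(\check\rho_1-\rho^m)$ into $(\check\rho_2-\rho^m)$ and closes the estimate. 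The repair is therefore to reinstate $|\hat\rho-\check\rho_i|$ in both totals and to apply the triangle inequality with vertex $\rho^m$ rather than with vertex $\check\rho_1$; the opposite case $\check\rho_1<\check\rho_2<\rho^m$ and the degenerate case $v^r=L_1(\rho^r;\rho^l,v^l)$ then go through along the lines you indicate.
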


\begin{proof}
  If $\rs1 = \rs2$, then $\tv(\tilde \rho_1) = \tv (\tilde \rho_2)$.
  Therefore, we assume  that 
  $$\rs1 \ne \rs2.$$
  In this case we have that
  $f_1(\Rsol ((\rho^l,v^l),(\rho^r,v^r))(0)) > q$
  and so, by construction of $\Rsol^1$ and
  $\Rsol^2$, we deduce that $\tilde \rho_1(x) = \tilde \rho_2(x)$
  for a.e. $x < 0$.
  Moreover, for $x>0$, $\tv \left( \tilde \rho_{2|_{]0, +\infty[}}\right)
  = \abs{\check \rho_2 - \rho^r}$, since the states
  $(\check \rho_2, \check v_2)$ and $(\rho^r, v^r)$ are connected
  by a contact discontinuity wave of the second family. Hence
  \begin{eqnarray*}
    \tv (\tilde \rho_2) & = & \tv \left( \tilde \rho_{2|_{]-\infty, 0[}}\right)
    + \abs{\hat \rho - \check \rho_2} + \abs{\check \rho_2 - \rho^r} \\
    & = & \abs{\rho^l - \hat \rho}
    + \abs{\hat \rho - \check \rho_2} + \abs{\check \rho_2 - \rho^r}.
  \end{eqnarray*}
  First consider the case $v^r = L_1(\rho^r; \rho^l, v^l)$,
  so that $(\check \rho_1, \check v_1)$ and $(\rho^r, v^r)$ can be
  connected by a wave of the first family. We get that
  \begin{eqnarray*}
    \tv (\tilde \rho_1) & = & \tv \left( \tilde \rho_{1|_{]-\infty, 0[}}\right)
    + \abs{\hat \rho - \check \rho_1} + \abs{\check \rho_1 - \rho^r} \\
    & = & \abs{\rho^l - \hat \rho}
    + \abs{\hat \rho - \check \rho_1} + \abs{\check \rho_1 - \rho^r}.
  \end{eqnarray*}
  If $\rho^r \leq \check \rho_1$, then 
  $\check \rho_2 \in ]\rho^r,\check \rho_1]$ and we obtain
  $\tv (\tilde \rho_1) = \tv (\tilde \rho_2)$. \\
  If $\rho^r > \check \rho_1$, then $\check \rho_1 < \check \rho_2 <
  \rho^r \le \hat \rho$ and so
  $\tv (\tilde \rho_1) - \tv (\tilde \rho_2) = 
  2 (\check \rho_2 - \check \rho_1) > 0$. \\
  Consider now the case $v^r \ne L_1(\rho^r; \rho^l, v^l)$.
  We have that
  \begin{displaymath}
    \tv (\tilde \rho_1) = \abs{\rho^l - \hat \rho} +
    \abs{\hat \rho - \check \rho_1} +
    \abs{\check \rho_1 - \rho^m} + \abs{\rho^m - \rho^r}.
  \end{displaymath}
  If $\check \rho_2 \leq \check \rho_1$, then we get that
  $\rho^m \leq \check \rho_2$ and 
  \begin{displaymath}
    \tv (\tilde \rho_1) - \tv (\tilde \rho_2) =  
    \check \rho_2 - \rho^m + \abs{\rho^m - \rho^r} - \abs{\check \rho_2 
      -\rho^r} \geq 0
  \end{displaymath}
  by the triangular inequality.
  If $\check \rho_2 > \check \rho_1$, then we get that
  $\rho^m > \check \rho_2$ and 
  \begin{eqnarray*}
    \tv (\tilde \rho_1) - \tv (\tilde \rho_2) & = & 
    \check \rho_2 + \rho^m - 2\check \rho_1 
    + \abs{\rho^m - \rho^r} - \abs{\check \rho_2 
      -\rho^r} \\
    & \ge & 2 (\check \rho_2 - \check \rho_1) > 0
  \end{eqnarray*}
  by the triangular inequality.
  This completes the proof.
\end{proof}

%
%
\subsection{Total variation of the velocity $v$ (i.e. the first Riemann invariant)}
\label{sse:tv_velocity}

This subsection deals with the total variation of the velocity, i.e. of the
first Riemann invariant $z$.

\begin{prop}
  For every initial conditions $(\rho^l, v^l), (\rho^r, v^r) \in \dom$,
  we have that
  \begin{equation}
    \label{eq:tv_v}
    \tv (\tilde v_1) \geq \tv (\tilde v_2).
  \end{equation}
\end{prop}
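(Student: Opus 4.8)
The plan is to mirror the structure of the preceding proof for the density $\rho$, exploiting the explicit wave structure of the two Riemann solvers. As before, if $\rs1 = \rs2$ the equality $\tv(\tilde v_1) = \tv(\tilde v_2)$ is trivial, so I would assume $\rs1 \neq \rs2$, which forces $f_1(\Rsol((\rho^l,v^l),(\rho^r,v^r))(0)) > q$. In this regime both solvers use the same left part $\Rsol((\rho^l,v^l),(\hat\rho,\hat v))$ for $x<0$, so $\tilde v_1(x) = \tilde v_2(x)$ for a.e.\ $x<0$ and the contributions to the total variation on $]-\infty,0[$ coincide; everything reduces to a comparison on $]0,+\infty[$. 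The key simplification is that $v$ is the $2$-Riemann invariant (the table gives $z = v$, and $L_2(\rho;\rho_0,v_0) = v_0$), so $v$ is \emph{constant} across every wave of the second family. In particular the contact discontinuity connecting $(\check\rho_2,\check v_2)$ to $(\rho^r,v^r)$ in $\rs2$ carries no jump in $v$ at all, since $\check v_2 = v^r$ by~\eqref{eq:check_rho1}.

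Concretely, for $\rs2$ on $]0,+\infty[$ the only possible jump in $v$ occurs at $x=0$, between $\hat v = q/\hat\rho$ and $\check v_2 = v^r$, so $\tv\bigl(\tilde v_{2|_{]0,+\infty[}}\bigr) = |\hat v - v^r|$. For $\rs1$ on $]0,+\infty[$ one first jumps from $\hat v$ to $\check v_1 = q/\check\rho_1$ at $x=0$ and then resolves the classical Riemann problem between $(\check\rho_1,\check v_1)$ and $(\rho^r,v^r)$; here I would split into the two cases used above. If $v^r = L_1(\rho^r;\rho^l,v^l)$, the states $(\check\rho_1,\check v_1)$ and $(\rho^r,v^r)$ are joined by a single $1$-wave, so $\tv\bigl(\tilde v_{1|_{]0,+\infty[}}\bigr) = |\hat v - \check v_1| + |\check v_1 - v^r|$; if $v^r \neq L_1(\rho^r;\rho^l,v^l)$, an intermediate state $(\rho^m,v^m)$ with $v^m = v^r$ appears, so the portion of the solution between the $1$-wave and the $2$-contact has $v$-value $v^r$ and again $\tv\bigl(\tilde v_{1|_{]0,+\infty[}}\bigr) = |\hat v - \check v_1| + |\check v_1 - v^r|$ (the $1$-rarefaction or $1$-shock sweeps monotonically from $\check v_1$ to $v^m = v^r$). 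In either case the triangle inequality $|\hat v - v^r| \le |\hat v - \check v_1| + |\check v_1 - v^r|$ yields $\tv\bigl(\tilde v_{1|_{]0,+\infty[}}\bigr) \ge \tv\bigl(\tilde v_{2|_{]0,+\infty[}}\bigr)$, and adding the common left contribution finishes the proof.

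The main point to get right — the place where I expect the only real subtlety — is the verification that in the case $v^r \neq L_1(\rho^r;\rho^l,v^l)$ the $v$-profile on $]0,+\infty[$ for $\rs1$ is monotone between $\check v_1$ and $v^r$, i.e.\ that the $1$-wave (rarefaction if $\check v_1 < v^r$, shock if $\check v_1 > v^r$) does not overshoot, so that its total variation in $v$ is exactly $|\check v_1 - v^r|$ rather than something larger. This follows from the fact that along a $1$-Lax curve $v$ varies monotonically (the curve is parametrized by $L_1(\rho;\cdot) = v_0 + p(\rho_0) - p(\rho)$, strictly decreasing in $\rho$) and that the classical solver selects the monotone piece between the two endpoints; I would state this explicitly and possibly refer to Figures~\ref{fig:1min2} and~\ref{fig:2min1}. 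Once this monotonicity is in hand, the rest is a one-line application of the triangle inequality, exactly as in the density estimate above.
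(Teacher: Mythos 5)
Your proposal is correct and follows essentially the same route as the paper: identical case splitting, the identities $\check v_2=v^r$ and $v^m=v^r$, and the triangle inequality, with the added (welcome) explicit justification that the $1$-wave is monotone in $v$. The only blemish is terminological: by the paper's convention $z=v$ is the \emph{first} Riemann invariant (constant across $2$-waves), not the second.
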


\begin{proof}
  If $\rs1 = \rs2$, then the thesis clearly holds.
  Therefore we assume that 
  $$\rs1 \ne \rs2.$$ 
  In this situation we have that
  $f_1(\Rsol((\rho^l,v^l),(\rho^r,v^r))(0)) > q$ and so, by construction
  of $\rs1$ and $\rs2$, we deduce that $\tilde v_1(x) = \tilde v_2(x)$
  for a.e. $x<0$. It is clear that
  \begin{eqnarray*}
    \tv (\tilde v_2) & = & \abs{v^l - \hat v} + \abs{\hat v - \check v_2}
    + \abs{\check v_2 - v^r} \\
    & = & \abs{v^l - \hat v} + \abs{\hat v - v^r}
  \end{eqnarray*}
  since $\check v_2 = L_2(\check \rho_2; \rho^r, v^r) = v^r$.

  If $v^r = L_1(\rho^r; \rho^l, v^l)$, then
  $$
    \tv (\tilde v_1) = \abs{v^l - \hat v} + \abs{\hat v - \check v_1}
    + \abs{\check v_1 - v^r} 
    $$
  and so, by the triangular inequality, we deduce
  $\tv (\tilde v_1) \geq \tv (\tilde v_2)$.

  If $v^r \ne L_1(\rho^r; \rho^l, v^l)$, then
  \begin{eqnarray*}
    \tv (\tilde v_1) & = & \abs{v^l - \hat v} + \abs{\hat v - \check v_1}
    + \abs{\check v_1 - v^m} + \abs{v^m - v^r} \\
    & = & \abs{v^l - \hat v} + \abs{\hat v - \check v_1}
    + \abs{\check v_1 - v^r}
  \end{eqnarray*}
  since $v^m = v^r$ by~(\ref{eq:rho2m}). 
  Again, $\tv (\tilde v_1) \geq \tv (\tilde v_2)$ by the triangular inequality.
  

  The proof is so finished.
\end{proof}

%
%
\subsection{Total variation of the generalized momentum $y$}
\label{sse:tv_y}

This subsection deals with the total variation of the
generalized momentum $y=\rho (v+p(\rho))$.

\begin{prop}
  Assume that hypothesis~(\ref{eq:pressure-function}) holds.
  For every initial conditions
  $(\rho^l, v^l), (\rho^r, v^r) \in \dom$, we have that
  \begin{equation}
    \label{eq:tv_y}
    \tv (\tilde y_1) \ge \tv (\tilde y_2).
  \end{equation}
\end{prop}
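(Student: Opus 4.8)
The plan is to follow exactly the pattern used for the density and velocity estimates: reduce to the nontrivial case $\rs1\neq\rs2$, which forces $f_1(\Rsol((\rho^l,v^l),(\rho^r,v^r))(0))>q$, and observe that the two solvers coincide for a.e.\ $x<0$, so $\tilde y_1(x)=\tilde y_2(x)$ there. It remains to compare the contributions from $x>0$, where the left trace at $x=0$ is $(\hat\rho,\hat v)$ for both solvers, while the right states are respectively $(\check\rho_1,\check v_1)$ (for $\rs1$) and $(\check\rho_2,\check v_2)$ (for $\rs2$). The key structural remark, to be recorded first, is that in the $(\rho,y)$-plane the value $y=\rho(v+p(\rho))$ moves monotonically along each of the Lax curves $L_1$ and $L_2$ issued from a base point: along $L_2$ it is simply $\rho\mapsto L_2(\rho;\rho^r,v^r)\cdot$ nothing — more precisely $y=\rho L_2(\rho;\cdot)$, an increasing function of $\rho$ on the relevant range; along $L_1$ one uses that $\hat w=\check w_1=w^l$ so that $y=\rho\, w^l$ is again increasing in $\rho$. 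Thus on $]0,+\infty[$ we have $\tv(\tilde y_{2})=|y^l-\hat y|+|\hat y-\check y_2|+|\check y_2-y^r|$ with $\check y_2=\check\rho_2 v^r+\check\rho_2 p(\check\rho_2)$ and, because $(\check\rho_2,\check v_2)$–$(\rho^r,v^r)$ is a $2$-contact, $\check y_2$ lies between $y^r$ and the value obtained by running $L_2$, so the last two terms collapse appropriately.

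Concretely I would split into the two cases already used in Section~4: (i) $v^r=L_1(\rho^r;\rho^l,v^l)$, so $(\check\rho_1,\check v_1)$ and $(\rho^r,v^r)$ are joined by a single $1$-wave and $\tv(\tilde y_1)=|y^l-\hat y|+|\hat y-\check y_1|+|\check y_1-y^r|$; (ii) $v^r\neq L_1(\rho^r;\rho^l,v^l)$, where an intermediate state $(\rho^m,v^m)$ with $v^m=v^r$ appears and $\tv(\tilde y_1)=|y^l-\hat y|+|\hat y-\check y_1|+|\check y_1-y^m|+|y^m-y^r|$. In each case the common term $|y^l-\hat y|$ cancels, and I reduce to showing
\begin{displaymath}
  |\hat y-\check y_1|+(\text{rest}_1)\;\ge\;|\hat y-\check y_2|+|\check y_2-y^r|.
\end{displaymath}
Since $\hat y=\check y_1=q$ (both $(\hat\rho,\hat v)$ and $(\check\rho_1,\check v_1)$ satisfy $f_1=q$, i.e.\ $\rho v=q$!), in fact $\hat y=\check y_1=\check y_2=q$, which kills several terms outright: the right-hand side becomes just $|q-y^r|$, while the left-hand side becomes $(\text{rest}_1)$, and one is left with a clean triangle-inequality comparison $|\check y_1-y^r|\ge|q-y^r|$ in case (i) and $|\check y_1-y^m|+|y^m-y^r|\ge|q-y^r|$ in case (ii). Both follow from the triangular inequality together with $\check y_1=q$.

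The routine work is to make the ``$y$ is monotone along the relevant wave arcs'' claim precise on the exact sub-ranges of $\rho$ that occur (for the $1$-rarefaction in case (ii), and to be sure no entropy-admissible piece reverses monotonicity of $y$), so that the decomposition of $\tv(\tilde y_i)$ into the listed absolute values is legitimate; this is where one invokes hypothesis~(\ref{eq:pressure-function}), exactly as in Lemma~\ref{lem:I1_not_empty} and Lemma~\ref{le:h_q}. I expect the main obstacle to be bookkeeping rather than conceptual: getting the signs and the ordering of $\hat\rho,\check\rho_1,\check\rho_2,\rho^m,\rho^r$ right in each sub-case (mirroring Figures~\ref{fig:1min2}–\ref{fig:2min1}) so that the telescoping of the total variation is valid, after which the inequality is immediate because $\hat y=\check y_1=q$ collapses the $\rs1$ side to a single contribution that dominates the $\rs2$ side by the triangle inequality. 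If any sub-case does not collapse so cleanly — e.g.\ if $\tilde y_1$ is genuinely non-monotone on $]0,+\infty[$ — the fallback is to bound $\tv(\tilde y_1)$ from below by $|y^l-q|+|q-y^r|$ using that $\tilde y_1$ takes the value $q$ at $x=0+$, and compare directly with $\tv(\tilde y_2)\le|y^l-q|+|q-\check y_2|+|\check y_2-y^r|=|y^l-q|+|q-y^r|$, since $\check y_2=q$; this already yields $\tv(\tilde y_1)\ge\tv(\tilde y_2)$ and finishes the proof.
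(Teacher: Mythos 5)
Your argument collapses on a single, but fatal, miscalculation: the claim that $\hat y=\check y_1=\check y_2=q$ because these states satisfy $f_1=\rho v=q$. The generalized momentum is $y=\rho\bigl(v+p(\rho)\bigr)=\rho v+\rho p(\rho)$, so on the constraint curve $\rho v=q$ one has $y=q+\rho\,p(\rho)$, which equals $q$ only if $p$ vanishes there. Since $\hat\rho$, $\check\rho_1$ and $\check\rho_2$ are in general three distinct densities, the three values $\hat y=q+\hat\rho\,p(\hat\rho)$, $\check y_1=q+\check\rho_1 p(\check\rho_1)$, $\check y_2=q+\check\rho_2 p(\check\rho_2)$ are in general pairwise distinct; indeed $\hat\rho>\check\rho_1$ forces $\hat y>\check y_1$ strictly. (The identity that does hold across the nonclassical stationary jump is $\hat w=\check w_1=w^l$ for the Riemann invariant $w=v+p(\rho)=y/\rho$, which is a different statement — and note it is even inconsistent with your own earlier, correct, observation that $y=\rho\,w^l$ is increasing in $\rho$ along $L_1$.) Consequently every step that relies on this ``collapse'' — the reduction of the $\Rsol^q_2$ side to $|q-y^r|$, the comparison $|\check y_1-y^r|\ge|q-y^r|$, and the fallback bound $\tv(\tilde y_2)=|y^l-q|+|q-y^r|$ — is unsupported.

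What is actually needed, and what the paper does, is the weaker but correct monotonicity fact that $y=q+\rho\,p(\rho)$ is increasing in $\rho$ along the hyperbola $\rho v=q$, whence $\hat y\ge\max\{\check y_1,\check y_2\}$, followed by a case split on the ordering of $\check\rho_1$ and $\check\rho_2$ (equivalently of $\check y_1$ and $\check y_2$), each case being settled by the triangle inequality together with the position of $y^r$ (or of $y^m$ in the genuine two-wave configuration) relative to $\check y_1$ and $\check y_2$. Your overall skeleton — reduce to the nonclassical case, note agreement for $x<0$, split on whether $v^r=L_1(\rho^r;\rho^l,v^l)$ — is the right one, but without replacing the false identity by this ordering argument the proof does not go through.
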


\begin{proof}
  If $\rs1 = \rs2$, then $\tv (\tilde y_1) = \tv (\tilde y_2)$.
  Therefore we assume that 
  $$\rs1 \ne \rs2.$$ 
  In this situation we have that
  $f_1(\Rsol ((\rho^l,v^l),(\rho^r,v^r))(0)) >q$
  and so $\tilde y_1(x) = \tilde y_2(x)$ for
  a.e. $x < 0$. We define $\hat y = \hat \rho (\hat v + p(\hat \rho))$,
  $y^l =  \rho^l ( v^l + p( \rho^l))$,
  $y^r =  \rho^r ( v^r + p( \rho^r))$,
  $\check y_1 = \check \rho_1 (\check v_1 + p(\check \rho_1))$,
  $\check y_2 = \check \rho_2 (\check v_2 + p(\check \rho_2))$.
  Note that $\hat y \ge \max \{ \check y_1, \check y_2 \}$.\\
  We have that
  \begin{displaymath}
    \tv(\tilde y_2) = \abs{y^l - \hat y} + \abs{\hat y - \check y_2}
    + \abs{\check y_2 - y^r}.
  \end{displaymath}
  Consider first the case $v^r = L_1(\rho^r; \rho^l, v^l)$,
  which implies that
  \begin{displaymath}
    \tv (\tilde y_1) = \abs{y^l - \hat y} + \abs{\hat y - \check y_1}
    + \abs{\check y_1 - y^r}.
  \end{displaymath}
  If $\rho^r \leq \check \rho_1$, then, by~(\ref{eq:pressure-function}),
  we easily get that
  $y^r \leq \check y_2 \leq \check y_1$ and consequently
  $\tv (\tilde y_1) = \tv (\tilde y_2)$.\\ 
  If $\rho^r > \check \rho_1$, then, by~(\ref{eq:pressure-function}),
  $y^r > \check y_2 > \check y_1$ and so
  $\tv (\tilde y_1) - \tv (\tilde y_2) = 2 (\check y_2 - \check y_1)$.

  Consider now the case $v^r \ne L_1(\rho^r; \rho^l, v^l)$.
  We have that
  \begin{displaymath}
    \tv (\tilde y_1) = \abs{y^l - \hat y} + \abs{\hat y - \check y_1}
    + \abs{\check y_1 - y^m} + \abs{y^m - y^r},
  \end{displaymath}
  where $y^m = \rho^m (v^m + p(\rho^m))$.\\
  If $\check \rho_1 \leq \check \rho_2$, then, by~(\ref{eq:pressure-function}),
  we deduce that $\check y_1 \leq \check y_2 \leq y^m$ and so
  \begin{eqnarray*}
    \tv (\tilde y_2) & = & \abs{y^l - \hat y} + (\hat y - \check y_2)
    + \abs{\check y_2 - y^r} \\
    & \leq & \abs{y^l - \hat y} + (\hat y - \check y_1)
    + \abs{\check y_2 - y^m} + \abs{y^m - y^r}\\
    & \leq & \abs{y^l - \hat y} + (\hat y - \check y_1)
    + \abs{\check y_1 - y^m} + \abs{y^m - y^r} = \tv( \tilde y_1).
  \end{eqnarray*}
  If $\check \rho_1 > \check \rho_2$, then, by~(\ref{eq:pressure-function}),
  we deduce that
  $\check y_1 > \check y_2 > y^m$ and so
  \begin{eqnarray*}
    \tv (\tilde y_2) & = & \abs{y^l - \hat y} + (\hat y - \check y_2)
    + \abs{\check y_2 - y^r} \\
    & \le & \abs{y^l - \hat y} + (\hat y - \check y_2)
    + \abs{\check y_2 - y^m} + \abs{y^m - y^r}\\
    & = & \abs{y^l - \hat y} + (\hat y - y^m)
    + \abs{y^m - y^r} = \tv( \tilde y_1).
  \end{eqnarray*}
  The proof is completed.
\end{proof}

%
%
\subsection{Total variation of the second Riemann invariant $w$}
\label{sse:tv_rc2}

This subsection deals with the total variation of the
second Riemann coordinate $w = v + p(\rho)$.

\begin{prop}
  For every initial conditions $(\rho^l, v^l), (\rho^r, v^r) \in \dom$, we have that
  \begin{equation}
    \label{eq:tv_z}
    \tv (\tilde w_1) \le \tv (\tilde w_2).
  \end{equation}
\end{prop}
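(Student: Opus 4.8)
The plan is to follow the scheme of the three previous propositions. If $\rs1=\rs2$ the equality is immediate, so assume $\rs1\neq\rs2$; then $f_1(\Rsol(\sx,\dx)(0))>q$ and both solvers are given by their respective ``case~2'' formulas~\eqref{eq:rs1_case2} and~\eqref{eq:rs2_case2}. The structural fact driving the computation is that $w=v+p(\rho)$ is the $2$-Riemann invariant: it is constant along the first Lax curve $L_1(\rho;\rho_0,v_0)=v_0+p(\rho_0)-p(\rho)$, hence, the system being of Temple class, it is constant across every wave of the first family, be it a shock or a rarefaction fan. Consequently, in any classical Riemann solution $w$ varies only across the single contact discontinuity of the second family, which travels with strictly positive speed. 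In particular $\hat w=\check w_1=w^l$, since $(\hat\rho,\hat v)$ and $(\check\rho_1,\check v_1)$ both lie on the first Lax curve through $\sx$, whereas $\check w_2=\check v_2+p(\check\rho_2)=v^r+p(q/v^r)=h_q(v^r)$, which in general differs from $w^l$ (see Figures~\ref{fig:1min2} and~\ref{fig:2min1}).

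First I would compute $\tv(\tilde w_1)$. For $x<0$ the solver $\rs1$ coincides with $\Rsol(\sx,(\hat\rho,\hat v))$, whose first-family wave keeps $w\equiv w^l$ (and whose second-family component is trivial, as $\hat w=w^l$), so $\tilde w_1\equiv w^l$ on $]-\infty,0[$ and the left trace at $x=0$ is $w^l$. The stationary shock at $x=0$ connects $(\hat\rho,\hat v)$ to $(\check\rho_1,\check v_1)$, and since $\check w_1=w^l$ too, $w$ has no jump there. For $x>0$, $\rs1$ coincides with $\Rsol((\check\rho_1,\check v_1),\dx)$, whose first-family wave keeps $w\equiv\check w_1=w^l$ and whose second-family contact carries $w$ from $w^l$ to $w^r$. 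Hence $\tv(\tilde w_1)=\abs{w^l-w^r}$.

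Next I would compute $\tv(\tilde w_2)$. On $]-\infty,0[$ it coincides with $\tilde w_1$ (both equal the $1$-wave solution $\Rsol(\sx,(\hat\rho,\hat v))$), so $\tilde w_2\equiv w^l$ there and its left trace at $x=0$ is $w^l$. For $x>0$, $\rs2$ coincides with $\Rsol((\check\rho_2,\check v_2),\dx)$; since $\check v_2=v^r$, the first-family wave of this sub-problem is trivial, so it reduces to a single second-family contact discontinuity of speed $v^r>0$, entirely contained in $x>0$. Its right trace at $x=0^+$ is therefore $\check w_2=h_q(v^r)$, and $w$ jumps from $\check w_2$ to $w^r$ across it. Adding the stationary jump at $x=0$ (from $w^l$ to $\check w_2$) and this one gives $\tv(\tilde w_2)=\abs{w^l-\check w_2}+\abs{\check w_2-w^r}$.

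Finally, the triangle inequality
\[
  \abs{w^l-w^r}\;\le\;\abs{w^l-\check w_2}+\abs{\check w_2-w^r}
\]
yields $\tv(\tilde w_1)\le\tv(\tilde w_2)$. The only delicate point is the justification that $w$ is genuinely constant across the first-family wave in each sub-Riemann problem, so that all of its variation is concentrated on the second-family contacts and, for $\rs2$, on the stationary discontinuity at $x=0$; this is where the Temple-class structure and the explicit expression of $L_1$ are used, and it is also where one should check, exactly as in the earlier propositions, that no vacuum is created in the sub-problems.
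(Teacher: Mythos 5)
Your proof is correct and follows essentially the same route as the paper's: reduce to the nontrivial case, use $\hat w=\check w_1=w^l$ (constancy of $w$ across first-family waves) to get $\tv(\tilde w_1)=\abs{w^l-w^r}$ and $\tv(\tilde w_2)=\abs{w^l-\check w_2}+\abs{\check w_2-w^r}$, and conclude by the triangle inequality. The only cosmetic difference is that you treat the cases $v^r=L_1(\rho^r;\rho^l,v^l)$ and $v^r\ne L_1(\rho^r;\rho^l,v^l)$ in a unified way (noting $w^r=w^l$ in the former), whereas the paper separates them.
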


\begin{proof}
  If $\rs1 = \rs2$, then $\tv (\tilde z_1) = \tv (\tilde z_2)$.
  Therefore we assume that 
  $$\rs1 \ne \rs2.$$ 
  In this situation we have that
  $f_1(\Rsol ((\rho^l,v^l),(\rho^r,v^r))(0)) >q$
  and so $\tilde w_1(x) = \tilde w_2(x)$ for
  a.e. $x < 0$. We define $\hat w = \hat v + p(\hat \rho)$,
  $w^l =  v^l + p( \rho^l)$,
  $w^r =  v^r + p( \rho^r)$,
  $\check w_1 = \check v_1 + p(\check \rho_1)$,
  $\check w_2 = \check v_2 + p(\check \rho_2)$.
  Note that $w^l=\hat w = \check w_1$.\\
  We have that
  \begin{displaymath}
    \tv(\tilde w_2) = \abs{w^l - \hat w} + \abs{\hat w - \check w_2}
    + \abs{\check w_2 - w^r}.
  \end{displaymath}
  Consider first the case $v^r = L_1(\rho^r; \rho^l, v^l)$,
  which implies that
  \begin{displaymath}
    \tv (\tilde w_1) = \abs{w^l - \hat w} \le \tv (\tilde w_2).
  \end{displaymath}
  Consider now the case $v^r \ne L_1(\rho^r; \rho^l, v^l)$.
  In this case we have that
  \begin{displaymath}
    \tv (\tilde w_1) = \abs{w^l - \hat w} + \abs{w^m - w^r},
  \end{displaymath}
  where $w^m = v^m + p(\rho^m)$. Since $w^m = \hat w$, we
  conclude by the triangular inequality.

  The proof is so finished.
\end{proof}


\section{Numerical schemes}\label{se:numerics}

This section is devoted to the construction of finite volume numerical schemes 
to capture the solutions corresponding to $\Rsol_{1}^q$ and $\Rsol_{2}^q$.

Let $\Delta x$ and $\Delta t$ be two constant increments
for space and time discretization.
We then define the mesh interfaces
$x_{j+1/2} = j \Delta x$ (so that $x_{1/2}=0$ corresponds to
the constraint location) and the cell centers
$x_{j} = (j - 1/2) \Delta x$ for $j \in \mathbb{Z}$,
the intermediate times $t^n = n \Delta t$ for
$n \in \mathbb{N}$, and at each time $t^n$
we denote ${\bf u}^n_j$ an
approximate mean value of the solution of
\eqref{eq:aw-rascle2}, \eqref{eq:constraint} on the interval
$\mathcal{C}_{j} = [x_{j-1/2}, x_{j+1/2})$, $j \in \mathbb{Z}$.
In other words, a piecewise constant approximation
$x \to {\bf u}(t^n, x)$ of the conserved variables
${\bf u} = (\rho, y)$ is given by
$$
{\bf u}(t^n, x) = {\bf u}^n_{j} \,\,\, {\mbox{for all}} \,\,\,
x \in \mathcal{C}_{j}, \,\,\, j \in \mathbb{Z}, \,\,\, n \in
\mathbb{N}.
$$
When $n=0$, we set
\begin{equation} \label{initialisation}
  {\bf u}^0_{j}= \frac{1}{\Delta x} \int_{x_{j-1/2}}^{x_{j+1/2}}
  {\bf u}_0(x) dx, \,\,\, {\mbox{for all}} \,\,\,j \in \mathbb{Z},
\end{equation}
where ${\bf u}_0= (\rho_0, y_0)\in\dom$ is a given initial data
(we will restrict the study to Riemann-type initial data).

Given a sequence $({\bf u}^n_j)_{j \in \mathbb{Z}}$ at time $t^n$,
we concentrate now on the computation of an approximate solution
at the next time level $t^{n+1}$.

We will concentrate on Godunov scheme and show how to adapt it
in order to match the constraint condition~\eqref{eq:constraint} at $x=0$.   
We recall that, as pointed out in~\cite{MR2352330}, classical conservative
schemes (like Godunov method) may generate important 
non-physical oscillations near contact discontinuities. For this reason
we will restrict to Riemann data lying on the same second Riemann invariant,
i.e. we take $v^l+p(\rho^l)=v^r+p(\rho^r)$. More general cases can be treated for example combining
the techniques presented here with the Transport-Equilibrium
scheme described in~\cite{MR2352330}. Note that, in any case, a contact discontinuity
appears when applying the Riemann solver $\Rsol_{2}^q$.

For sake of completeness, we recall that classical Godunov scheme writes
\begin{equation}
  \label{unp1j}
  \bu^{n+1}_j = \bu^{n}_j - \frac{\Delta t}{\Delta x}
  ({\bf f}^{n}_{j+1/2} - {\bf f}^{n}_{j-1/2})
  \,\,\, \mbox{for all} \,\,\, j \, \in \, \mathbb{Z},
\end{equation}
where the numerical fluxes are given by
\begin{equation}
  \label{flux_god}
  {\bf f}^{n}_{j+1/2} = {\bf f}(\bu^{n}_j,\bu^{n}_{j+1}) =
  {f}(\Rsol (\bu^{n}_j,\bu^{n}_{j+1}) (0))
  \,\,\, \mbox{for all} \,\,\, j \, \in \, \mathbb{Z},
\end{equation}
and the usual CFL condition
\begin{equation}
  \label{cfl}
  \frac{\Delta t}{\Delta x} \max_{j  \in  \mathbb{Z}}\{|\lambda_i(\bu^n_j)|,\,\,
  i=1,2
  \} \leq \frac{1}{2}
\end{equation}
holds.
In the following sections we describe how to modify the definition of
the numerical flux~\eqref{flux_god}
for $j=0$. The simulations have been performed taking
$p(\rho)=\rho$ and $\Delta x=0.002$.

\subsection{The Constrained Godunov scheme for $\Rsol_{1}^q$}
\label{sse:CG1}

We follow the idea introduced in~\cite{AGS} for the scalar case. 
We redefine the numerical flux at the interface $x_{1/2}=0$ to take into account the 
imposed constraint~\eqref{eq:constraint}. 
We denote by ${\bf f}^{n}_{1,j+1/2}$, ${\bf f}^{n}_{2,j+1/2}$ the components of the 
classical Godunov flux:
$$
{\bf f}^{n}_{j+1/2} =
\left(
\begin{array}{c}
{\bf f}^{n}_{1,j+1/2} \\
{\bf f}^{n}_{2,j+1/2}
\end{array}
\right) \,.
$$
For $j=0$, we replace it by $\hat{\bf f}^{n}_{1/2}$, where
\be \label{eq:RS1flux}
\begin{array}{rcl}
\hat{\bf f}^{n}_{1,1/2} &=& \min\left\{ {\bf f}^{n}_{1,1/2} \ , \ q  \right\}\,, \\
\hat{\bf f}^{n}_{2,1/2} &=& \hat{\bf f}^{n}_{1,1/2} \ \displaystyle{\frac{ {\bf f}^{n}_{2,1/2} }{{\bf f}^{n}_{1,1/2}}}
=  \min\left\{  {\bf f}^{n}_{2,1/2} \ , \ q \ \displaystyle{\frac{ {\bf f}^{n}_{2,1/2} }{{\bf f}^{n}_{1,1/2}}}   \right\}  \,.
\end{array}
\ee
We stress that the above construction preserves conservation, 
in agreement with the conservative character of $\Rsol_{1}^q$.

\begin{theorem} \label{thm:max}
  {\rm\bf (Maximum principle)}
  Under the CFL restriction \eqref{cfl}, the finite volume
  numerical scheme defined by
  \eqref{unp1j}, \eqref{flux_god} and \eqref{eq:RS1flux}
  satisfies the maximum principle property
  \begin{displaymath}
    \inf_{l\in\mathbb{Z}} \left( v_l^0 + p(\rho_l^0 )\right)
    \le v_j^n + p(\rho_j^n ) \le
    \max_{l\in\mathbb{Z}} \left( v_l^0 + p(\rho_l^0 )\right)
  \end{displaymath}
  for all $j\in\mathbb{Z}$ and all $n\in\N$, where
  ${\bf u}_j^n = \left(
    \begin{array}{c}
      \rho_j^n \\
      y_j^n
    \end{array}
  \right)$ and $v_j^n + p(\rho_j^n) = \frac{y_j^n}{\rho_j^n} = w_j^n$.
\end{theorem}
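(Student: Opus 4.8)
The plan is to exploit a special algebraic feature of the scheme: at every mesh interface the second component of the numerical flux is the first component times the value of $w$ in the left cell. As a consequence the update \eqref{unp1j} rewrites $w^{n+1}_j$ as a convex combination of $w^n_j$ and $w^n_{j-1}$, and the maximum principle follows by induction on $n$.

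First I would establish the identity
\[
  {\bf f}^n_{2,j+1/2} \;=\; w^n_j\,{\bf f}^n_{1,j+1/2}\qquad\text{for every }j\in\mathbb Z ,
\]
where for $j=0$ the pair $({\bf f}^n_{1,1/2},{\bf f}^n_{2,1/2})$ stands for the modified flux $(\hat{\bf f}^n_{1,1/2},\hat{\bf f}^n_{2,1/2})$ of \eqref{eq:RS1flux}. Since $f_2=f_1\,w$ identically, for $j\neq0$ it suffices to check that the Godunov trace $\Rsol(\bu^n_j,\bu^n_{j+1})(0)$ carries the value $w^n_j$: indeed $\Rsol$ produces a $1$-wave joining $\bu^n_j$ to an intermediate state followed by a $2$-contact discontinuity; the $2$-contact has strictly positive speed $\lambda_2=v$ and hence does not reach $x=0$, while $w$ is constant along the entire $1$-Lax curve through $\bu^n_j$ (the system being of Temple class), hence across the $1$-wave --- and also, by Rankine--Hugoniot, across a possible stationary $1$-shock, where $f_2/f_1=w$ is continuous. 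For $j=0$ the identity is built into \eqref{eq:RS1flux}, which sets $\hat{\bf f}^n_{2,1/2}=\hat{\bf f}^n_{1,1/2}\,{\bf f}^n_{2,1/2}/{\bf f}^n_{1,1/2}=\hat{\bf f}^n_{1,1/2}\,w^n_0$ (and $\hat{\bf f}^n_{1,1/2}=\hat{\bf f}^n_{2,1/2}=0$ when ${\bf f}^n_{1,1/2}=0$).

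Using $y^n_j=\rho^n_j w^n_j$ together with this identity in \eqref{unp1j}, and setting $A^n_j:=\rho^n_j-\frac{\Delta t}{\Delta x}{\bf f}^n_{1,j+1/2}$ and $B^n_j:=\frac{\Delta t}{\Delta x}{\bf f}^n_{1,j-1/2}$, one gets $y^{n+1}_j=A^n_j\,w^n_j+B^n_j\,w^n_{j-1}$ and $\rho^{n+1}_j=A^n_j+B^n_j$, hence
\[
  w^{n+1}_j \;=\; \frac{A^n_j\,w^n_j + B^n_j\,w^n_{j-1}}{A^n_j+B^n_j}.
\]
Once we know that $A^n_j\ge0$, $B^n_j\ge0$ and $A^n_j+B^n_j>0$, this is a convex combination of $w^n_j$ and $w^n_{j-1}$, so $\min\{w^n_{j-1},w^n_j\}\le w^{n+1}_j\le\max\{w^n_{j-1},w^n_j\}$; iterating over $n\in\N$, with the trivial base case $n=0$, yields the bounds of the statement with the $\inf$ and $\sup$ taken over the initial data.

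It remains to establish the positivity of the weights, which is the heart of the proof and where the CFL condition \eqref{cfl} is used. That $B^n_j\ge0$ is clear, since at every interface the $\rho$-flux is a trace value $\rho_* v_*\ge0$ and the truncation by $q>0$ preserves this. For $A^n_j\ge0$ I would use the exact-evolution interpretation of Godunov's method, legitimate precisely because \eqref{cfl} keeps all waves inside the half-cells, so that the Riemann fans issued at neighbouring interfaces do not interact and the interface fluxes coincide with the (time-independent) physical fluxes: the quantity $A^n_j\,\Delta x=\rho^n_j\,\Delta x-\Delta t\,{\bf f}^n_{1,j+1/2}$ is exactly the amount of $\rho$-mass with $w=w^n_j$ remaining in the cell $\mathcal C_j$ after one time step --- because the outgoing flux through $x_{j+1/2}$ carries $w^n_j$, the incoming flux through $x_{j-1/2}$ carries $w^n_{j-1}$, and $\rho\,\phi(w)$ is transported with no internal source for every $\phi$ (for the Aw--Rascle system $w$ solves $\pt w + v\px w=0$) --- and hence it is nonnegative. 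At $x=0$ the same argument applies once one notes that the modification only lowers the outgoing flux, $\hat{\bf f}^n_{1,1/2}\le{\bf f}^n_{1,1/2}$, and leaves the $w$-content of the flux equal to $w^n_0$. Finally $\rho^{n+1}_j=A^n_j+B^n_j>0$ because the untouched core of $\mathcal C_j$ still carries $\rho^n_j>0$ (vacuum-free data being assumed throughout), so all the quotients above make sense and the proof is complete. The step I expect to require the most care is exactly this identification of $A^n_j$ as a nonnegative ``$w$-coloured'' mass, in particular the bookkeeping of the Godunov trace in the transonic and vacuum configurations of the Riemann fan; an alternative route for that step is a direct case analysis bounding ${\bf f}^n_{1,j+1/2}\le\frac{\Delta x}{2\Delta t}\rho^n_j$ from the concavity of the $\rho$-flux $\rho\mapsto\rho\big(w^n_j-p(\rho)\big)$ along the $1$-Lax curve together with \eqref{cfl}.
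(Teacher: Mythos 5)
Your argument is correct, but it takes a genuinely different route from the paper's. The paper's proof is a reduction: it invokes the known maximum principle for the \emph{classical} Godunov scheme (citing Remark~3.1(ii) of the reference on the Transport--Equilibrium scheme), so that only the cells $j=0,1$ need checking, and there it observes that the modified flux $\hat{\bf f}^{n}_{1/2}$ is itself an exact Godunov flux, namely ${\bf f}(\bu_0^n,\hat\bu)={\bf f}(\check\bu_1,\bu_1^n)=(q,\,q\,w_0^n)^{T}$ where $\hat\bu,\check\bu_1$ are the traces of $\Rsol_1^q(\bu_0^n,\bu_1^n)$; since $\hat w=\check w_1=w_0^n$, no new values of $w$ enter the stencil and the classical result applies. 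You instead give a self-contained proof: the upwind identity ${\bf f}^{n}_{2,j+1/2}=w^n_j\,{\bf f}^{n}_{1,j+1/2}$ (valid at every interface because the $2$-contact has positive speed and $w$ is constant along the $1$-Lax curve, and valid at $j=0$ by the very design of \eqref{eq:RS1flux}) turns the update \eqref{unp1j} into $w^{n+1}_j=(A^n_j w^n_j+B^n_j w^n_{j-1})/(A^n_j+B^n_j)$, and CFL gives $A^n_j,B^n_j\ge0$. The two proofs hinge on the same structural fact --- the constrained flux carries the ratio ${\bf f}_2/{\bf f}_1=w_0^n$ --- but yours makes the mechanism explicit and even yields the sharper local bound $\min\{w^n_{j-1},w^n_j\}\le w^{n+1}_j\le\max\{w^n_{j-1},w^n_j\}$, at the price of re-deriving what the paper imports by citation. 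Two small points to tighten: the ``exact-evolution'' reading of Godunov is not literally available at the modified interface (no Riemann problem between $\bu_0^n$ and $\bu_1^n$ has flux $\hat{\bf f}^n_{1/2}$), but you only use it to bound ${\bf f}^n_{1,1/2}$ from above together with $\hat{\bf f}^n_{1,1/2}\le{\bf f}^n_{1,1/2}$, which suffices; and the strict positivity $\rho^{n+1}_j>0$ (needed for $w^{n+1}_j$ to be defined, and tacitly assumed in the paper as well) indeed follows from your outflow bound ${\bf f}^n_{1,j+1/2}\,\Delta t\le\rho^n_j\,\Delta x/2$, provided the data stay away from vacuum, which should be stated as a standing hypothesis.
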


\begin{proof}
  We observe first that the above maximum principle property on the
  second Riemann invariant is
  satisfied by the classical Godunov scheme \eqref{unp1j}, \eqref{flux_god} 
  (see for example \cite[Remark 3.1 (ii)]{MR2352330} for
  a detailed computation).
  Thus we only need to check what happens for $j=0,1$.

  If $\hat{\bf f}^{n}_{1,1/2} =  {\bf f}^{n}_{1,1/2}$, then also
  $\hat{\bf f}^{n}_{2,1/2} =  {\bf f}^{n}_{2,1/2}$
  and the scheme reduces to the classical Godunov scheme.
  Therefore we assume that
  $\hat{\bf f}^{n}_{1,1/2} = q < {\bf f}^{n}_{1,1/2}$ and 
  $\hat{\bf f}^{n}_{2,1/2} = q \, {\bf f}^{n}_{2,1/2} / {\bf f}^{n}_{1,1/2} 
  <  {\bf f}^{n}_{2,1/2}$.
  In this case, recalling the construction of $\Rsol_{1}^q$,
  it is easy to see that 
  \begin{displaymath}
    \begin{array}{rcl}
      \hat{\bf f}^{n}_{1,1/2} &=&  {\bf f}_1 (\bu_0^n, \hat\bu) =
      {\bf f}_1 (\check\bu_1, \bu_1^n) \,, \\
      \hat{\bf f}^{n}_{2,1/2} &=& q\, w(\Rsol\left(\bu_0^n, \bu_1^n \right)(0) )
      =  {\bf f}_2 (\bu_0^n, \hat\bu) =  {\bf f}_2 (\check\bu_1, \bu_1^n)  \,,
    \end{array}
  \end{displaymath}
  where $\hat\bu$ and $\check\bu_1$ are, respectively, the left
  and right traces at $x=0$ of  $\Rsol_{1}^q (\bu_0^n,\bu_1^n)$.
  In fact, since $\Rsol\left(\bu_0^n,  \hat\bu \right)$ counts
  only waves of negative speed, we have that
   \begin{displaymath}
     {\bf f} (\bu_0^n, \hat\bu) = f\left( \Rsol\left(\bu_0^n, 
         \hat\bu \right) (0) \right) = f (\hat\bu)
     =
     \left(
       \begin{array}{c}
         q \\
         q w_0^n
       \end{array}
     \right)
     \,.
   \end{displaymath}
   On the other side, since $\Rsol\left(\check\bu_1, \bu_1^n \right)$
   counts only waves of positive speed, we have that
   \begin{displaymath}
     {\bf f} (\check\bu_1, \bu_1^n) = 
     f\left( \Rsol\left(\check\bu_1, \bu_1^n \right) (0) \right) 
     = f (\check\bu_1)
     =
     \left(
       \begin{array}{c}
         q \\
         q w_0^n
       \end{array}
     \right)
     \,.
   \end{displaymath}
   Hence the following bounds hold for $j=0,1$:
   \begin{displaymath}
     \inf_{l=-1,\ldots,2} \left\{ w_l^n , \hat w, \check w_1 \right\}
     \le w_j^n  \le
     \max_{l=-1,\ldots,2} \left\{ w_l^n , \hat w, \check w_1 \right\} ,
   \end{displaymath}
   and we conclude observing that $\hat w = \check w_1 = w_0^n$.
\end{proof}

We have tested our method on Riemann data lying on the same 
1-Riemann invariant, in order to avoid spurious oscillations due to 
the presence of contact discontinuities. More general data can be dealt with
using the technique presented in~\cite{MR2352330}.
Figures~\ref{rhov-test1a},~\ref{rhov-test1b}, 
shows that the numerical solutions are in good agreement with 
exact solutions. In particular, our scheme perfectly
captures the nonclassical shock at $x=0$.

\begin{figure}[htbp]
  \begin{center}
    \includegraphics[width=6.2cm]{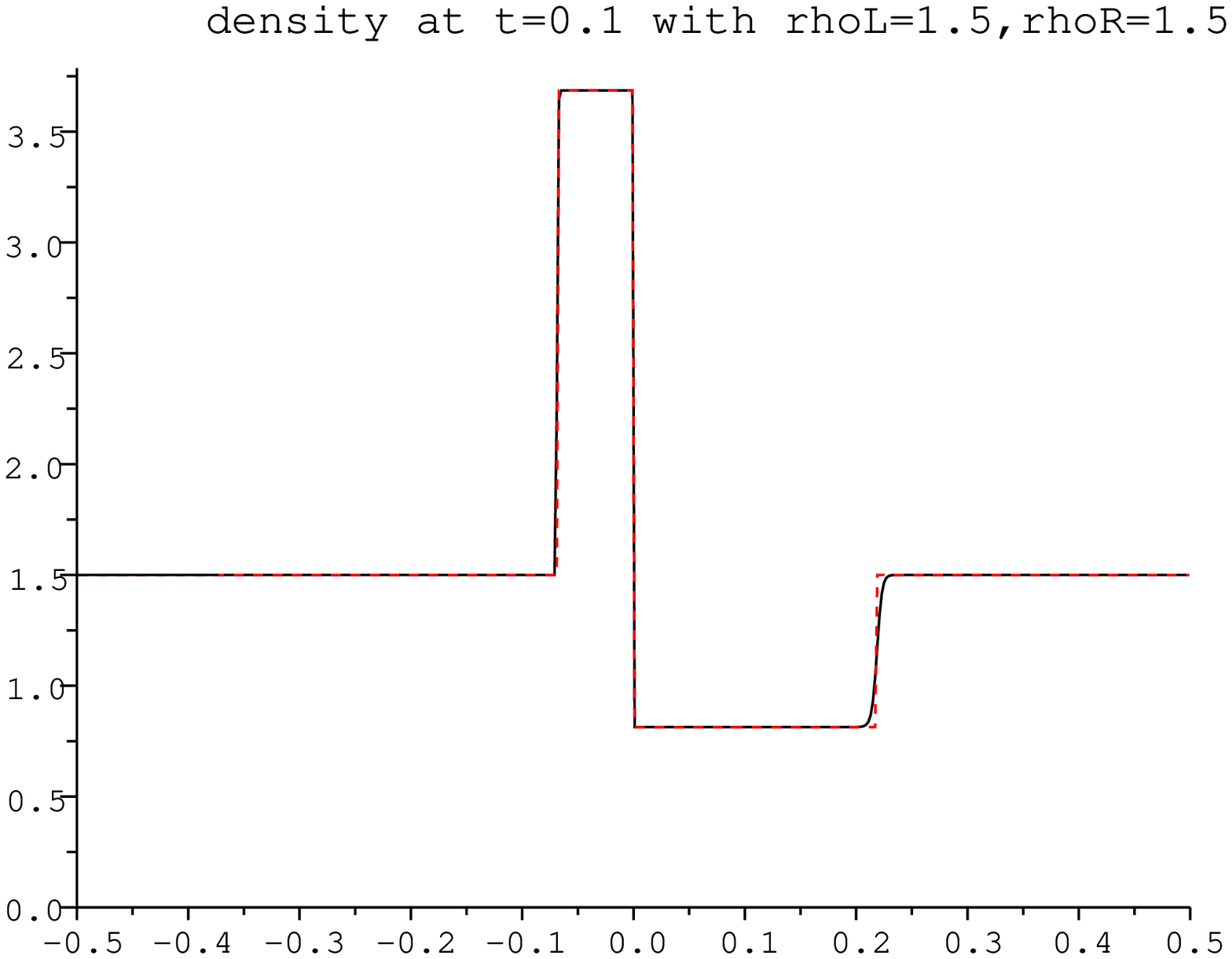} 
    \includegraphics[width=6.2cm]{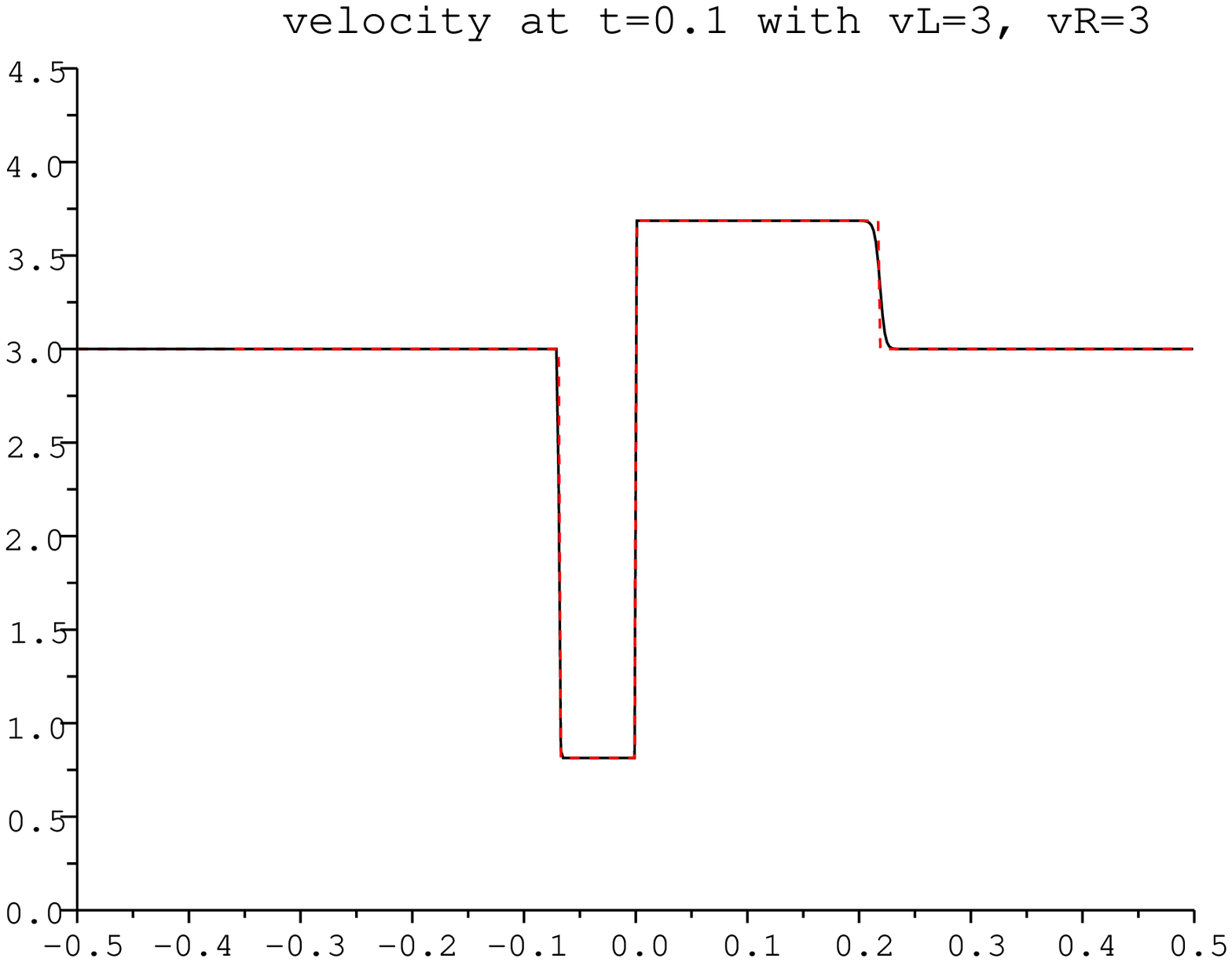}
    \caption{{\bf Test 1a} : Solution of the constrained
      Riemann solver $\Rsol_{1}^q$
      with data $\rho^l=\rho^r=1.5$, 
      $v^l=v^r=3$ and $q=3$: exact solution (dashed line),
      numerical approximation (continuous line).}
    \label{rhov-test1a}
  \end{center}
\end{figure}

\begin{figure}[htbp]
  \begin{center}
    \includegraphics[width=6.2cm]{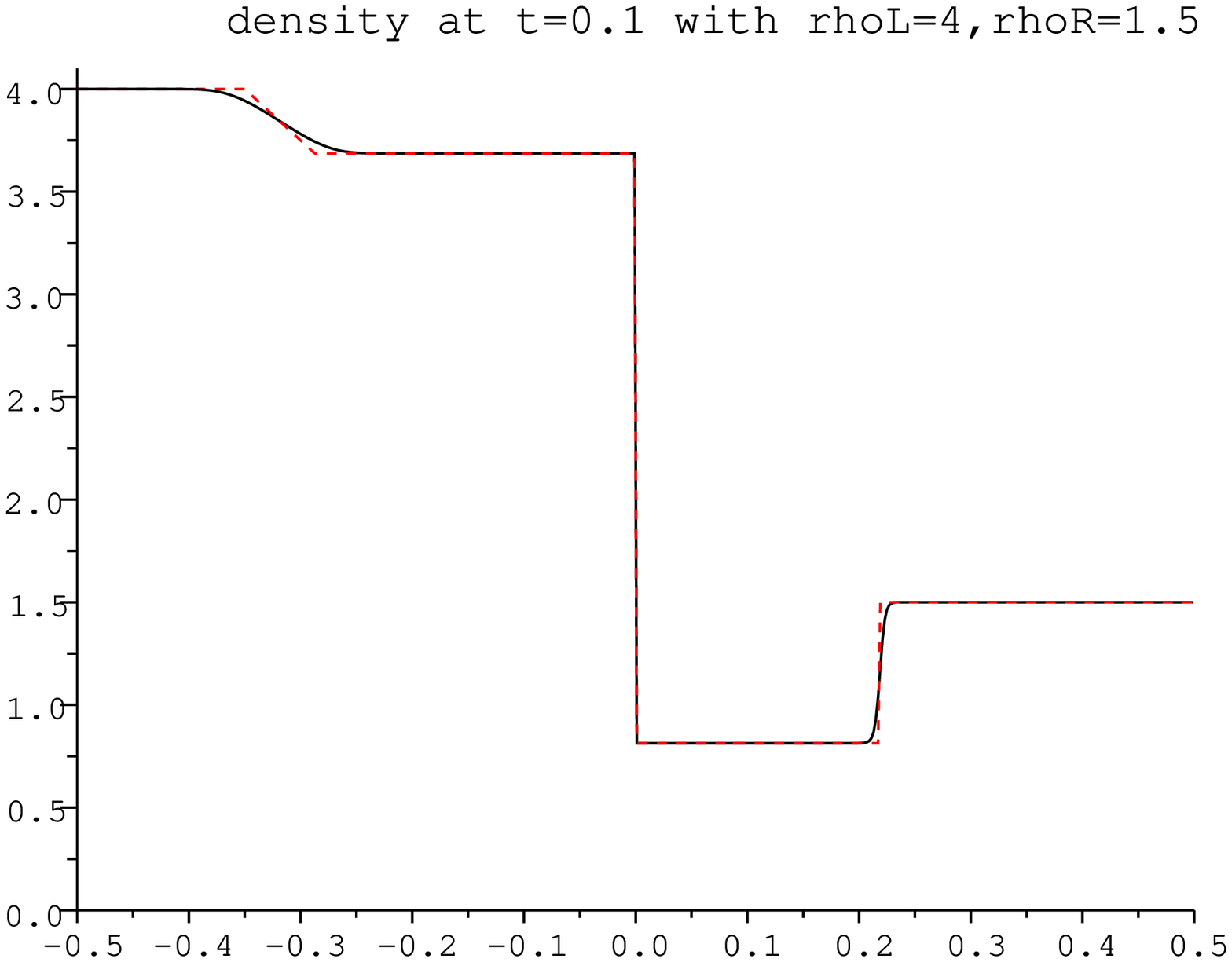} 
    \includegraphics[width=6.2cm]{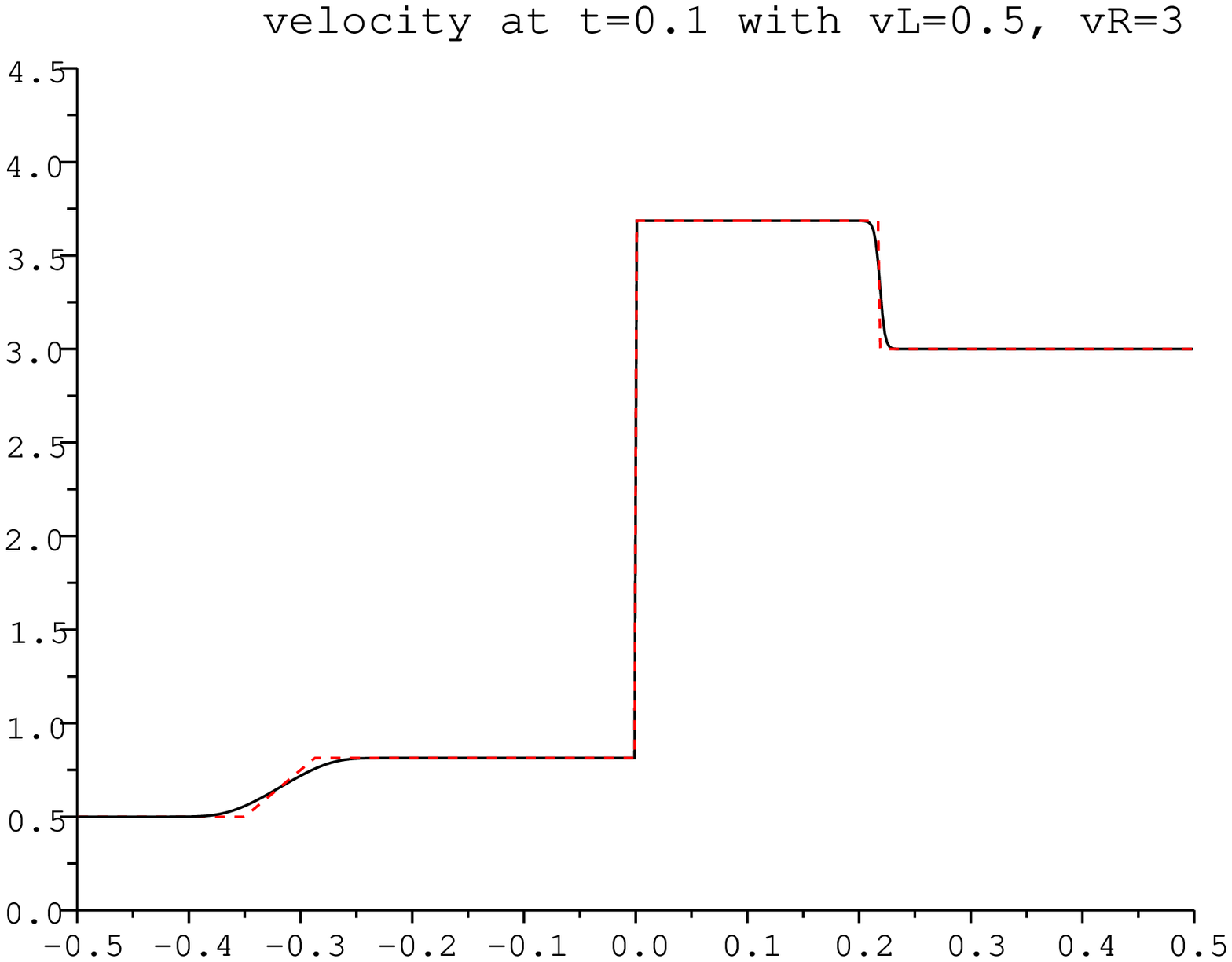}
    \caption{{\bf Test 1b} : Solution of the constrained Riemann
      solver $\Rsol_{1}^q$
      with data $\rho^l=4$, $\rho^r=1.5$,
      $v^l=0.5$, $v^r=3$ and $q=3$: exact solution (dashed line),
      numerical approximation (continuous line).}
    \label{rhov-test1b}
  \end{center}
\end{figure}

\subsection{The Constrained Godunov scheme for $\Rsol_{2}^q$}
\label{sse:CG2}

The Constrained Riemann Solver $\Rsol_{2}^q$ is not globally
conservative at the point $x=0$
(by definition, conservation holds only for the first
equation in~\eqref{eq:aw-rascle2} and therefore only car
density $\rho$ is conserved).
As a consequence, we look for a non-conservative
numerical scheme, i.e. we define two numerical fluxes
$\tilde{\bf f}^{n,-}_{1/2} \not= \tilde{\bf f}^{n,+}_{1/2}$ such that
\begin{eqnarray} 
  \bu^{n+1}_0 &=& \bu^{n}_0 - \frac{\Delta t}{\Delta x}
  (\tilde{\bf f}^{n,-}_{1/2} - {\bf f}^{n}_{-1/2}) \,, \label{eq:tracciasx} \\
  \bu^{n+1}_1 &=& \bu^{n}_1 - \frac{\Delta t}{\Delta x}
  ({\bf f}^{n}_{3/2} - \tilde{\bf f}^{n,+}_{1/2}) \label{eq:tracciadx} \,.
\end{eqnarray}
We set
\begin{displaymath}
  \tilde{\bf f}^{n,-}_{1,1/2} = \tilde{\bf f}^{n,+}_{1,1/2} 
  = \min\left\{ {\bf f}^{n}_{1,1/2}, q  \right\}\,,
\end{displaymath}
and 
$$
\tilde{\bf f}^{n,-}_{2,1/2} = 
\tilde{\bf f}^{n}_{1,1/2} \ \displaystyle{\frac{ {\bf f}^{n}_{2,1/2} }{{\bf f}^{n}_{1,1/2}}}
$$
In order to capture the right trace at $x=0$, we could envisage using a \emph{ghost cell}
type method (introduced in~\cite{MR1699710}, see also~\cite{MR2377108} and references therein for other applications), computing the ghost value $\check\bu_1^n$ corresponding to 
$\bu_1^n$, whose $(\rho,v)$ components are given by
$$
\check\rho_1^n = q/v_1^n \,, \quad
\check v_1^n = v_1^n \,,
$$
where $v^n_1=\displaystyle{\frac{y^n_1}{\rho^n_1}-p(\rho^n_1)}$.
This is obtained using the following flux
$$
  \tilde{\bf f}^{n,+}_{2,1/2} = q\left( v^n_1 +p(q/v^n_1)\right) \,,
$$
whenever ${\bf f}^{n}_{1,1/2} < q$.
Unfortunately, due to the convexity assumption~\eqref{eq:pressure-function} on the function $\rho\mapsto\rho p(\rho)$,
the velocity component is overestimated during the projection step of Godunov scheme in~\eqref{eq:tracciadx}
(see~\cite{MR2352330}). Therefore,  the right trace cannot be captured properly: the velocity component is overestimated 
and the density is underestimated,
see Figures~\ref{rhov-test2a}, \ref{rhov-test2b}.
In fact, at each time-step, we have $\check v_1^{n+1} = v_1^{n+1} \ge v_1^{n}$ and $\check\rho_1^{n+1} \le \check\rho_1^{n}$,
where the inequality is strict generally speaking.

In order to overcome this difficulty, we propose to simply keep the value of the velocity component fixed for $j=1$,
i.e. to replace the value obtained by~\eqref{eq:tracciadx} with $\tilde{\bf f}^{n,+}_{2,1/2} = \tilde{\bf f}^{n,-}_{2,1/2}$
by $v_1^{n+1} = v_1^{n}$, and then updating the conservative component as 
$$
y_1^{n+1} = \rho_1^{n+1} \left( v_1^{n} + p(\rho_1^{n+1}) \right) \,,
$$
whenever ${\bf f}^{n}_{1,1/2} < q$.
This allows to capture precisely the right trace of the discontinuity at $x=0$, as shown by numerical simulation
in Figures~\ref{rhov-test2a}, \ref{rhov-test2b}. Only, a small amplitude oscillation
traveling at speed $v=v^r$ is produced, see Figures~\ref{rhov-test2a-zoom}, \ref{rhov-test2b-zoom}.

\begin{figure}[htbp]
  \begin{center}
    \includegraphics[width=6.2cm]{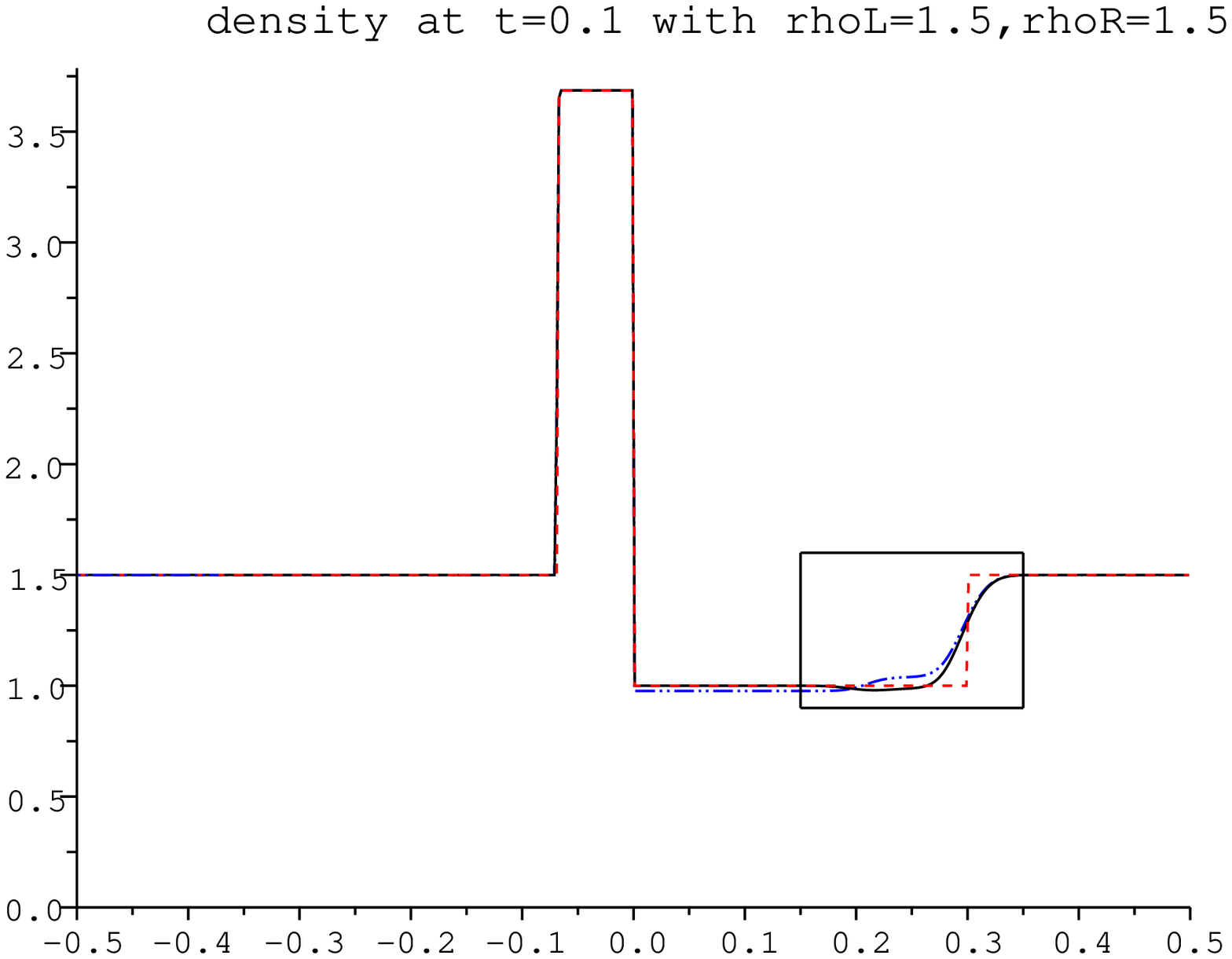} 
    \includegraphics[width=6.2cm]{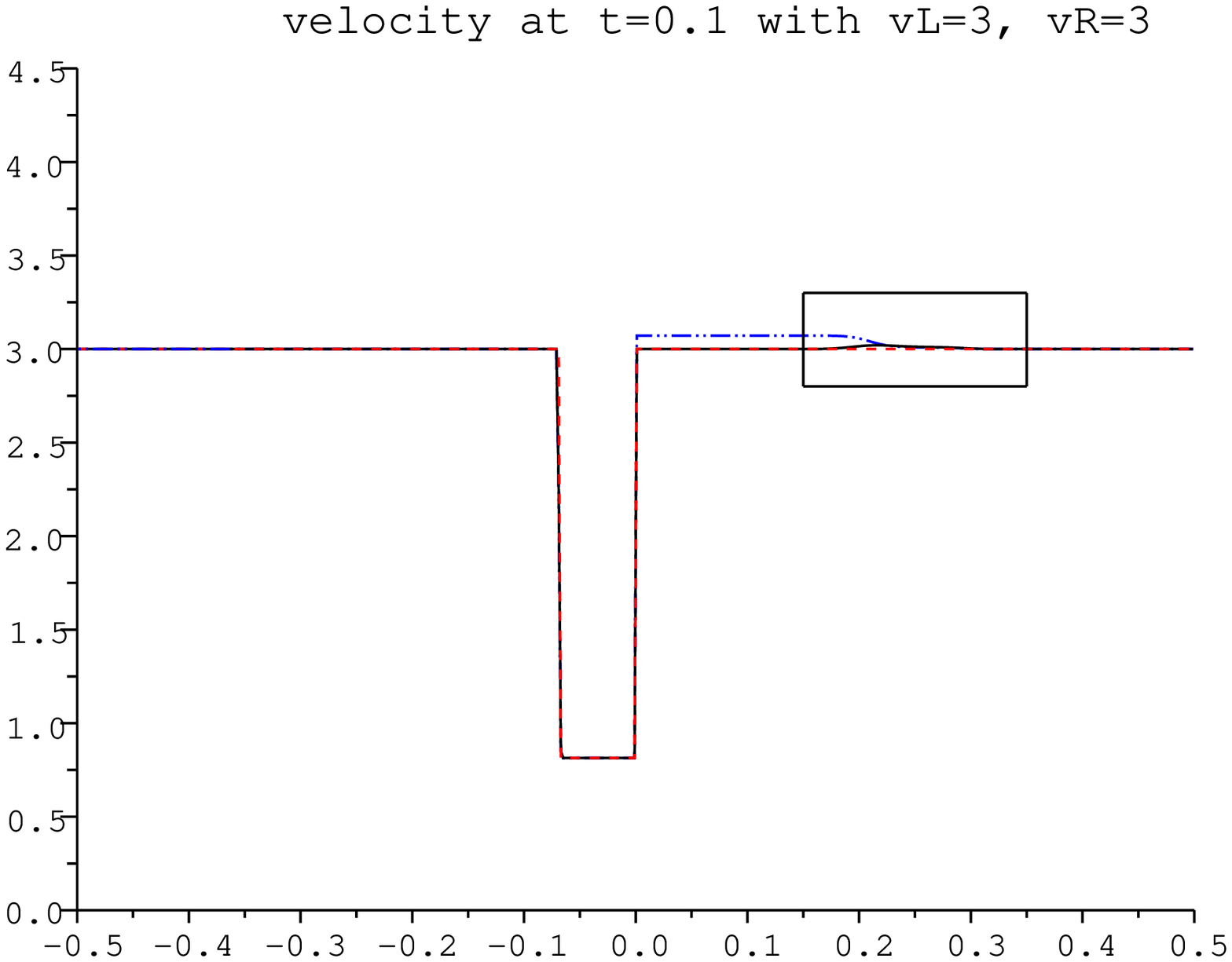}
    \caption{{\bf Test 2a}: Solution of the constrained Riemann
      solver $\Rsol_{2}^q$ with
      data $\rho^l=\rho^r=1.5$, $v^l=v^r=3$ and $q=3$:
      exact solution (dashed line), ghost cell method  (dash-dotted line),
      our method (continuous line).
      The rectangles select the zoomed areas plotted
      in Figure~\ref{rhov-test2a-zoom}.}
    \label{rhov-test2a}
  \end{center}
\end{figure}

\begin{figure}[htbp]
  \begin{center}
    \includegraphics[width=6.2cm]{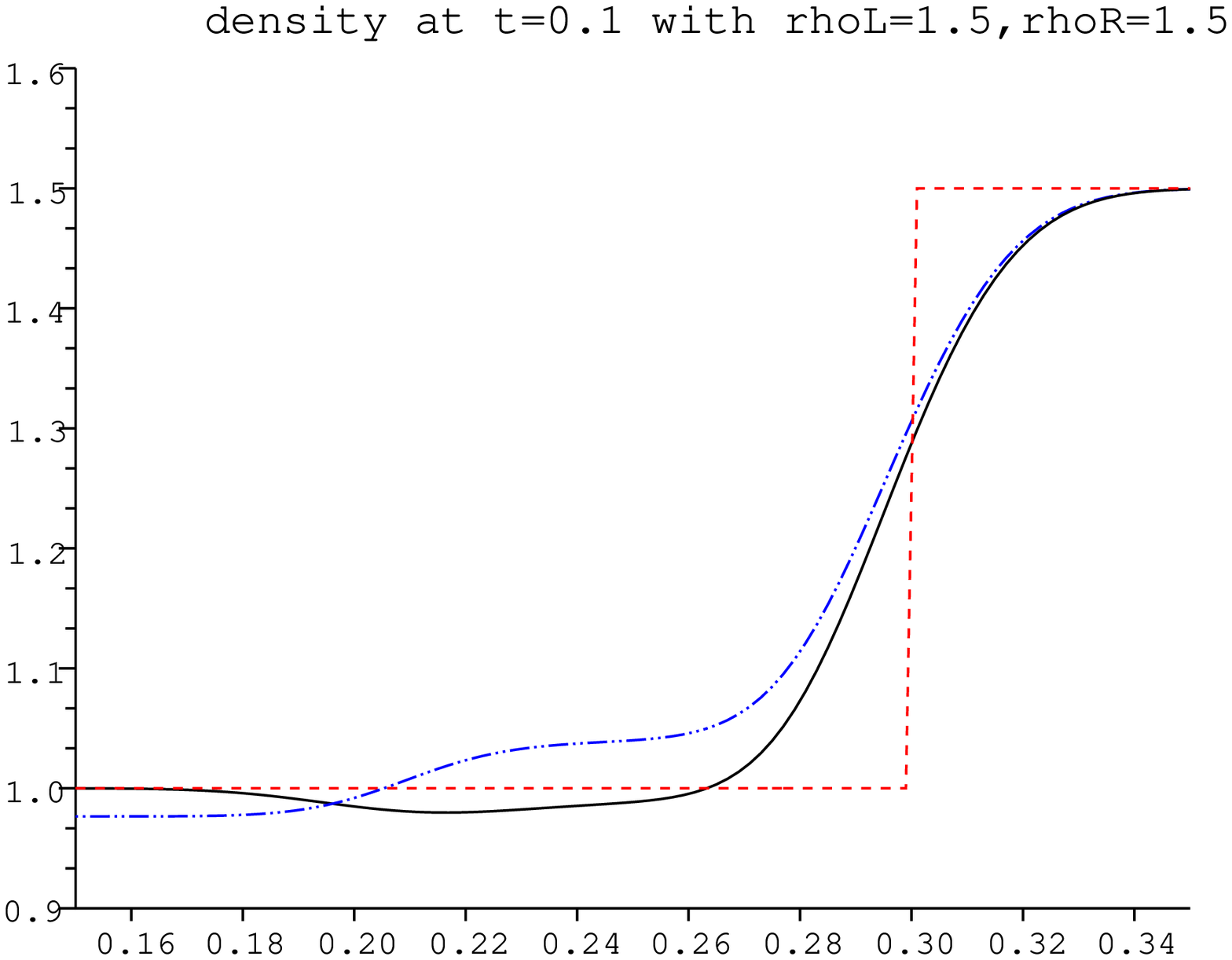} 
    \includegraphics[width=6.2cm]{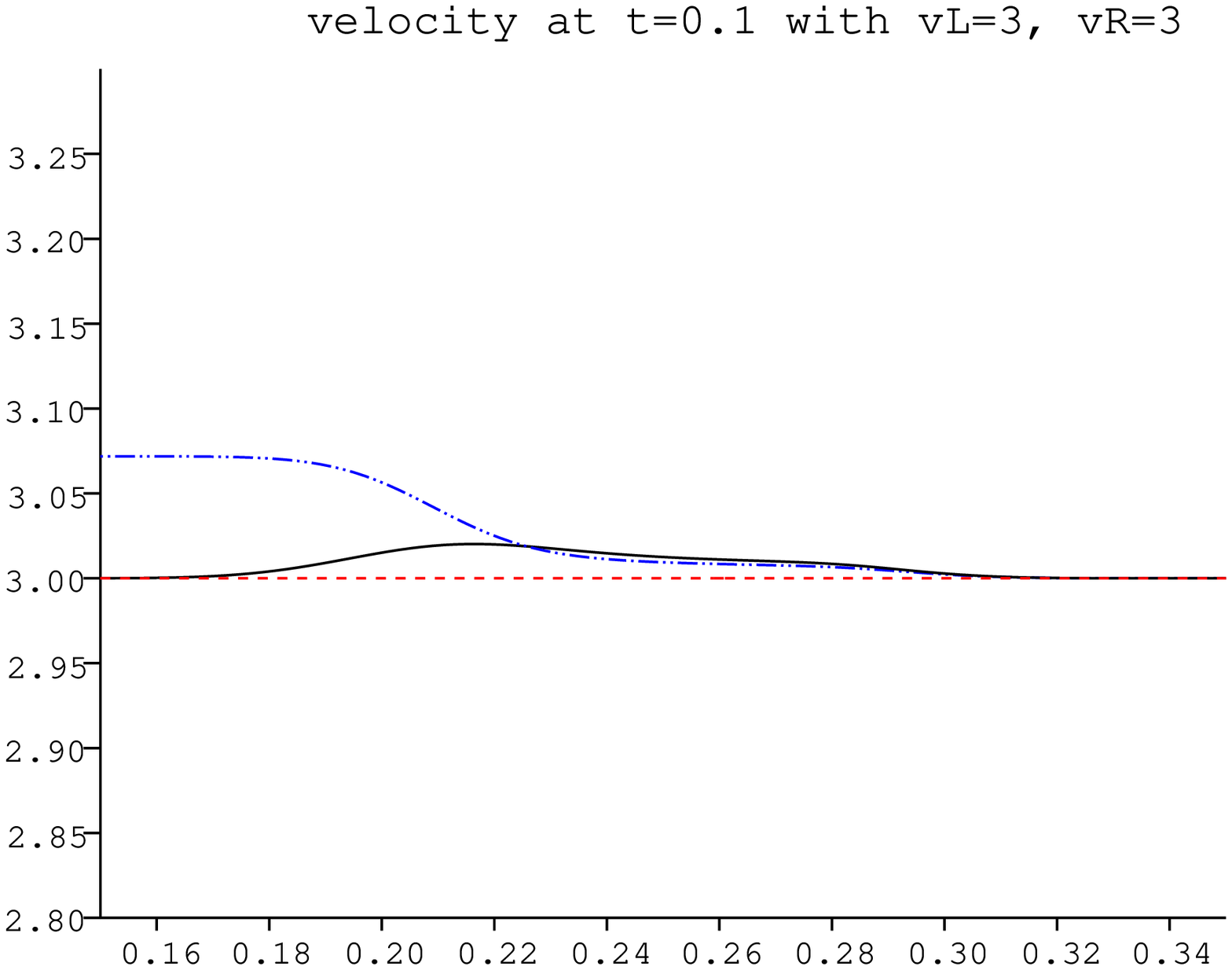}
    \caption{{\bf Test 2a}: Detailed view of a part of the
      computational domain of  Figure \ref{rhov-test2a}.
}
    \label{rhov-test2a-zoom}
  \end{center}
\end{figure}

\begin{figure}[htbp]
  \begin{center}
    \includegraphics[width=6.2cm]{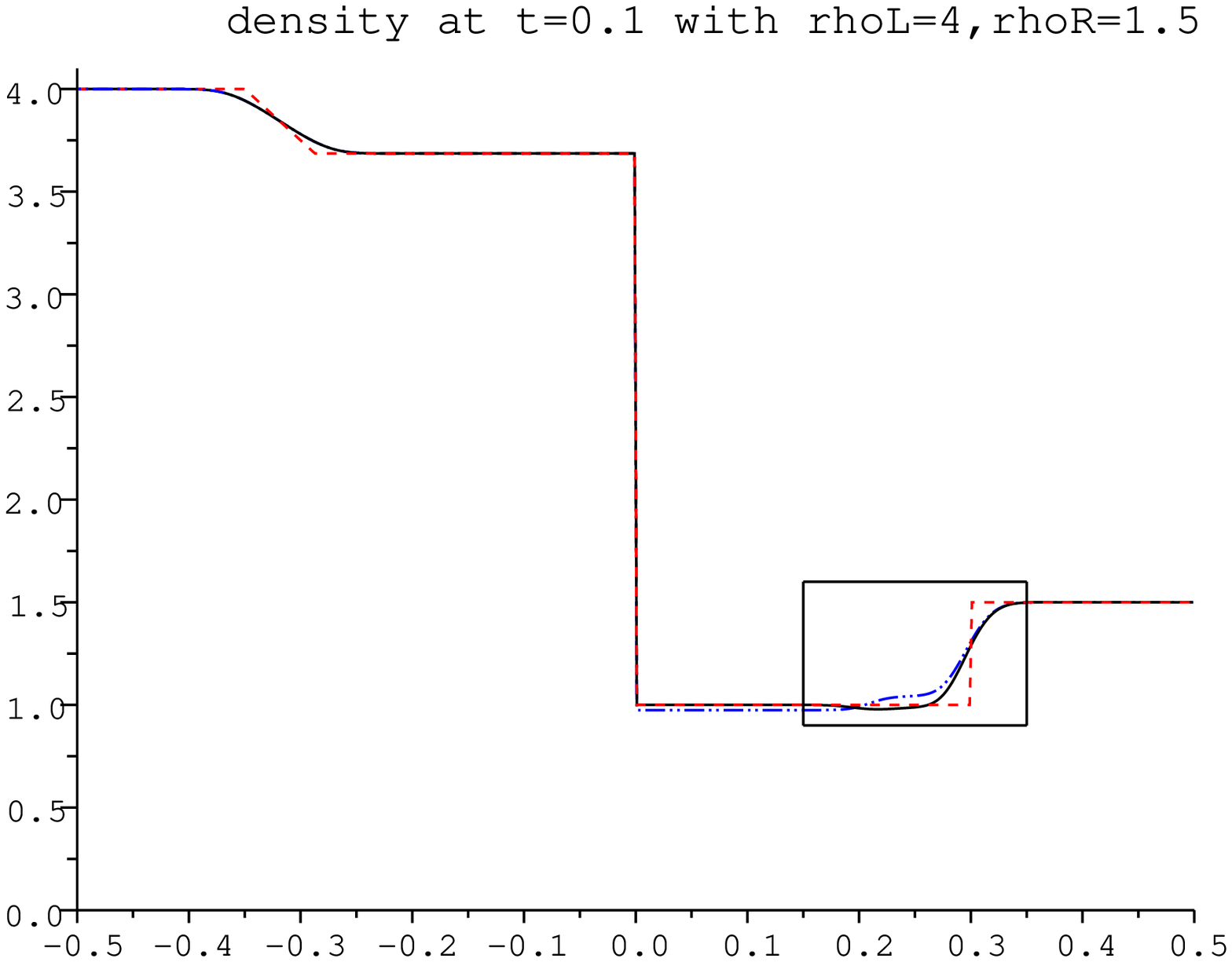} 
    \includegraphics[width=6.2cm]{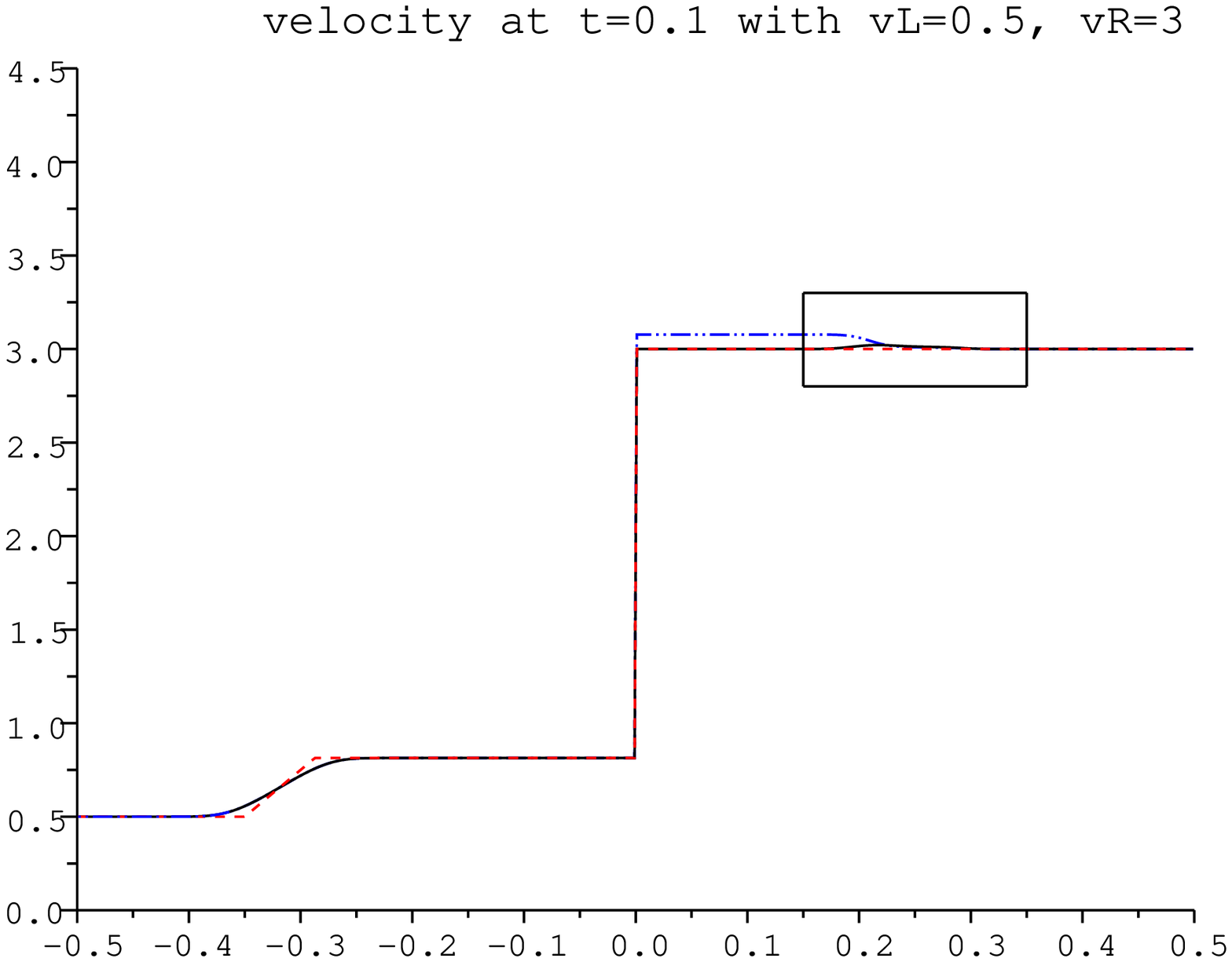}
    \caption{{\bf Test 2b}: Solution of the constrained
      Riemann solver $\Rsol_{2}^q$
      with data $\rho^l=4$, $\rho^r=1.5$,
      $v^l=0.5$, $v^r=3$ and $q=3$: exact solution (dashed line),
      ghost cell method  (dash-dotted line),
      our method (continuous line). The rectangles select the
      zoomed areas plotted in Figure~\ref{rhov-test2b-zoom}.}
    \label{rhov-test2b}
  \end{center}
\end{figure}

\begin{figure}[htbp]
  \begin{center}
    \includegraphics[width=6.2cm]{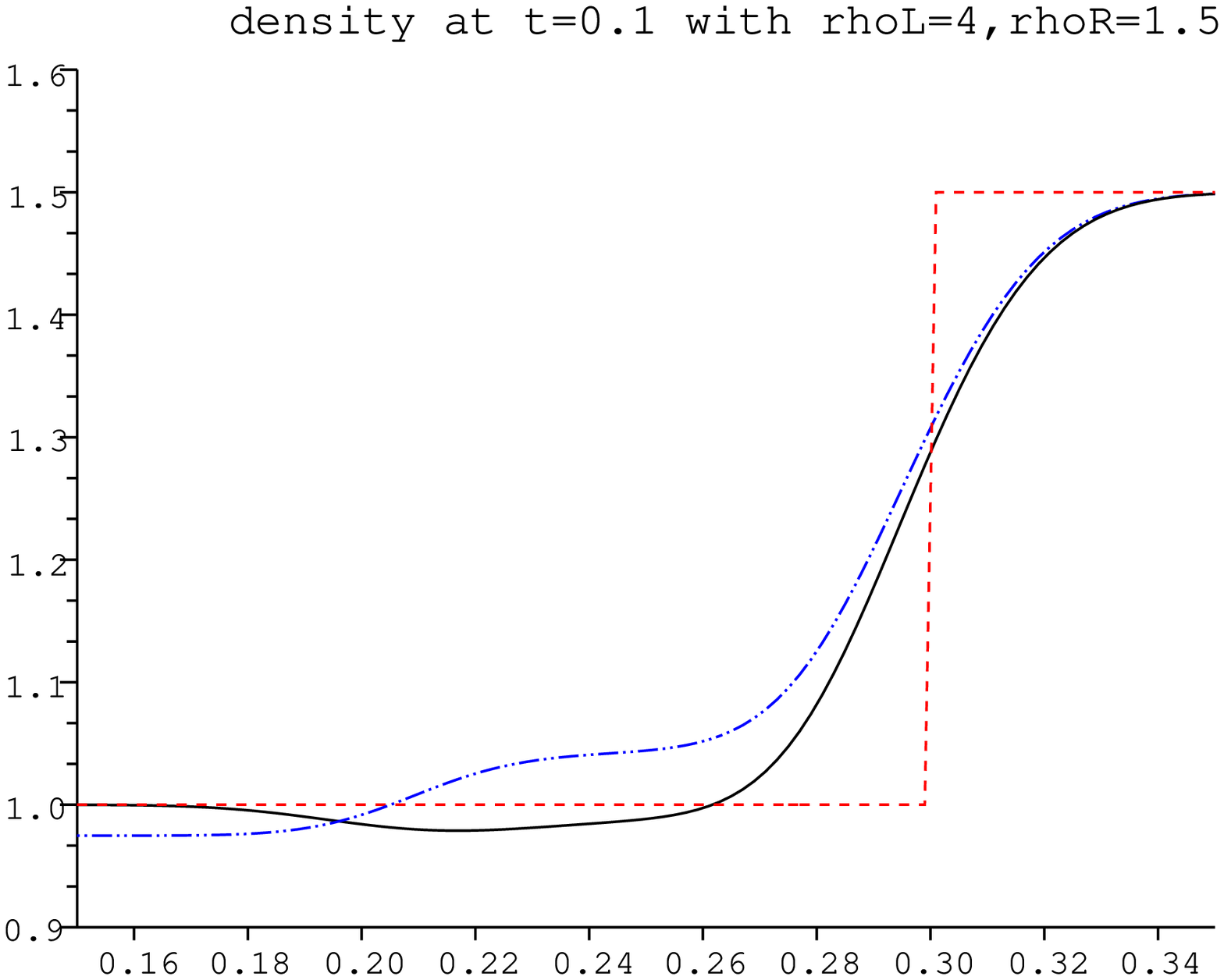} 
    \includegraphics[width=6.2cm]{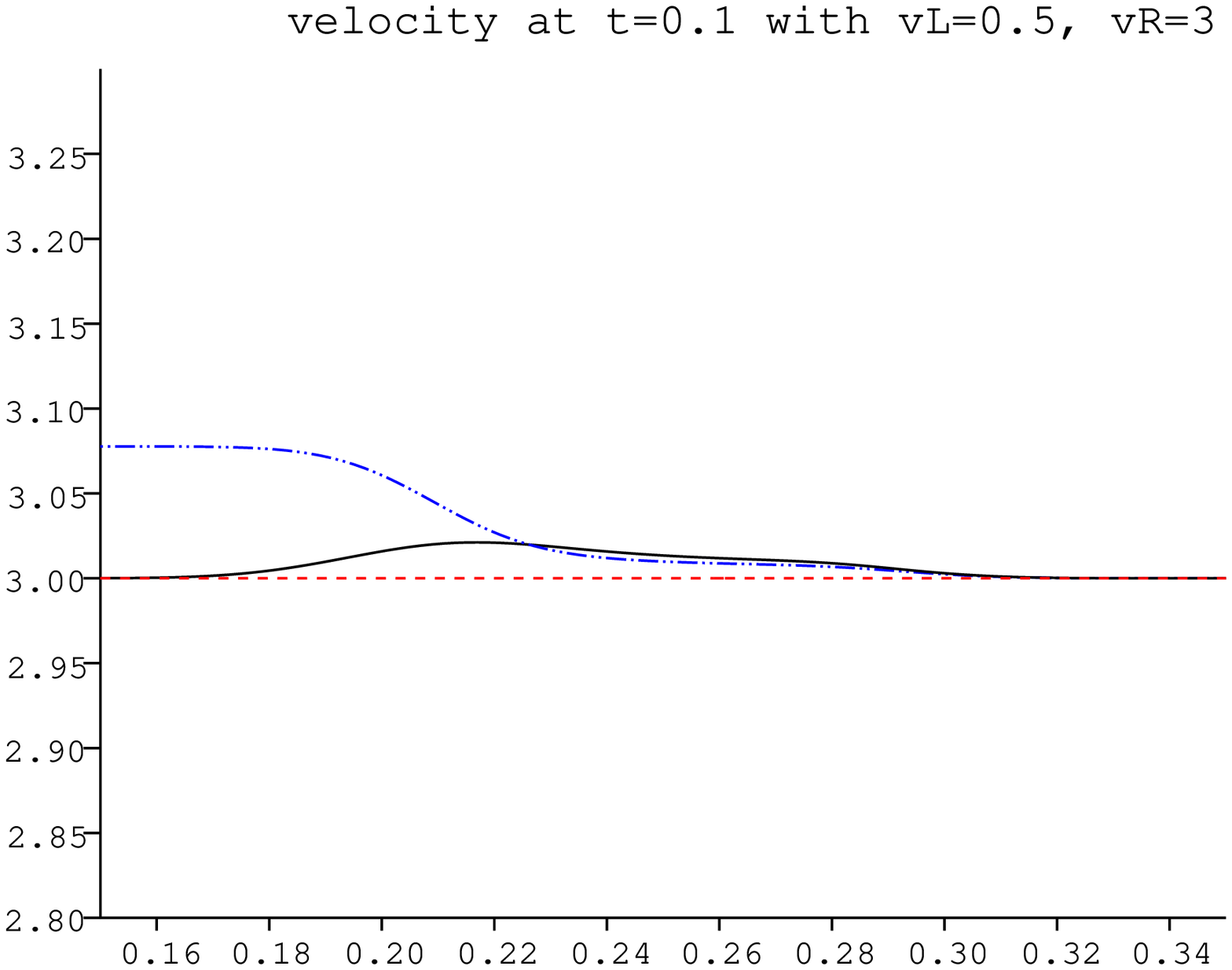}
    \caption{{\bf Test 2b}: Detailed view of a part of the
      computational domain of Figure \ref{rhov-test2b}.
}
    \label{rhov-test2b-zoom}
  \end{center}
\end{figure}

\section*{Acknowledgements}

The authors were supported by the NUSMAIN-NOMAIN 2009 project
of the Galileo program 2009 (French-Italian cooperation program).

The authors thank Christophe Chalons for useful
discussions on the numerical section.

%
%

{\small{ \bibliographystyle{abbrv}
    
    \bibliography{1.bib} }}

\end{document}